\documentclass{book}
\usepackage{amsmath}
  \usepackage{paralist}
  \usepackage{graphics} %% add this and next lines if pictures should be in esp format
  \usepackage{epsfig} %For pictures: screened artwork should be set up with an 85 or 100 line screen
\usepackage{tikz}
\usepackage{caption}
\usepackage{graphicx}  \usepackage{epstopdf}%This is to transfer .eps figure to .pdf figure; please compile your paper using PDFLeTex or PDFTeXify.
\usepackage{amssymb}
\usepackage{amsthm}
\usepackage{epstopdf}
\usepackage{verbatim}
\usepackage{mathrsfs}
\usepackage{mathtools}
 \usepackage[colorlinks=true]{hyperref}
\hypersetup{urlcolor=blue, citecolor=red}

\newtheorem{theorem}{Theorem}[section]
\newtheorem{corollary}{Corollary}

\newtheorem{lemma}[theorem]{Lemma}
\newtheorem{proposition}{Proposition}
\newtheorem{assumption}{Assumption}
\newtheorem{example}{Example}

\theoremstyle{definition}
\newtheorem{definition}[theorem]{Definition}
\newtheorem{remark}{Remark}

\newcommand{\defeq}{\overset{\scriptscriptstyle\mathrm{def}}{=}}
\newcommand{\Aut}{\operatorname{Aut}}
\newcommand{\MIS}{\operatorname{MIS}}
\newcommand{\Hmis}{\mathcal{H}_{\mathrm{MIS}}}
\newcommand{\His}{\mathcal{H}_{\mathrm{IS}}}
\newcommand{\Jblo}{J_{\mathrm{blo}}}
\newcommand{\ham}{H_{\gamma}}
\newcommand{\bx}{\mathbf{x}}
\newcommand{\by}{\mathbf{y}}
\newcommand{\bz}{\mathbf{z}}
\newcommand{\bze}{\mathbf{0}}

\DeclareMathOperator{\spn}{span}

\newenvironment{abstract}%
{\begin{center}\textbf{Abstract}\end{center}\begin{quotation}\noindent\ignorespaces}%
{\end{quotation}}

\title{Towards a Control interpretation of Quantum Advantage}

\begin{document}
	\maketitle
	
	% Enter the first author's name and address:
	\centerline{\scshape Dario Pighin$^*$}
	\medskip
	{\footnotesize
		% please put the address of the first author
		\centerline{Irontec, Internet y sistemas sobre GNU/Linux, S.L.}
		%   \centerline{Other lines}
		\centerline{c/Uribitarte, 6-2º (Bilbao – 48001), Spain}
		\centerline{E-mail: \href{mailto:dpighin@irontec.com}{dpighin@irontec.com}}
	} % Do not forget to end the {\footnotesize by the sign }
	\begingroup
	\let\thefootnote\relax\footnotetext{* We gratefully acknowledge professor Enrique Zuazua for his precious suggestions, along the preparation of the notes.}
	\endgroup
%	
%	
%	\medskip
%	
%	\centerline{\scshape Enrique Zuazua}
%	\medskip
%	{\footnotesize
%		% please put the address of the first author
%		\centerline{DeustoTech, Fundaci\'on Deusto}
%		%   \centerline{Other lines}
%		\centerline{Avda. Universidades,
%			24, 48007, Bilbao, Basque Country, Spain}
%	} % Do not forget to end the {\footnotesize by the sign }
%	\medskip
%	{\footnotesize
%		% please put the address of the first author
%		\centerline{Departamento de Matem\'aticas, Universidad Aut\'onoma de Madrid}
%		%   \centerline{Other lines}
%		\centerline{28049 Madrid, Spain}
%	} % Do not forget to end the {\footnotesize by the sign }
%	\medskip
%	{\footnotesize
%		% please put the address of the first author
%		\centerline{Facultad de Ingenier\'ia, Universidad de Deusto}
%		%   \centerline{Other lines}
%		\centerline{Avda. Universidades,
%			24, 48007, Bilbao, Basque Country, Spain}
%	} % Do not forget to end the {\footnotesize by the sign }
%	
%	\bigskip
%	
%	% The name of the associate editor will be entered by an editorial staff
%	% "Communicated by the associate editor name" is not needed for special issue.
%	% \centerline{(Communicated by the associate editor name)}[ADD]
%	
%	
	%The abstract of your paper
	\begin{abstract}
		We develop a control-theoretic framework for understanding Quantum Advantage (QA), providing a systematic route to characterize when and how QA can arise. The bilinear controlled Schr\"{o}dinger equation is the common thread: the target quantum computation is recast as an operator controllability problem on the special unitary group $SU(N)$, and QA is identified with a polynomial-in-$n$ upper bound on the associated minimal-time function.
		
		We illustrate the framework on two paradigmatic problems:
		\begin{enumerate}
			\item[(a)] the \emph{Quantum Fourier Transform (QFT)} on superconducting digital quantum processors (such as IBM's \texttt{ibm\_brisbane}), for which we prove operator controllability by a Lie-algebraic argument and derive an $O(n^2)$ upper bound on the minimal time via a gate-concatenation lemma combined with the standard QFT circuit decomposition;
			\item[(b)] the \emph{Maximum Independent Set (MIS)} problem on neutral-atom analog quantum processors (such as Pasqal's hardware), for which we analyze the Rydberg-blockade Hamiltonian as a bilinear control system and reformulate Quantum Approximate Optimization Algorithm (QAOA) as a continuous-time optimal control problem. By a controllability result, we show how the problem can be solved on Pasqal Quantum Computers and we introduce a control-based definition of Quantum Advantage for MIS.
		\end{enumerate}
		
		%Building on this analysis, we introduce a practical, checkable necessary condition for QA in terms of a surrogate optimal control problem on the commutator $[U(t)\Gamma^*, H_0]$, and we derive quantum speed limits through the Riemannian geometry of $SU(N)$.
		
		We conclude by outlining several open problems that chart directions for future research at the intersection of Control Theory and Quantum Computing.
		
	\end{abstract}
	\medskip
	
	\tableofcontents
		
		\chapter{Introduction}
		\label{chapter:1}
		
		The purpose of these notes is to address the following question:
		\begin{itemize}
			\item[]\textit{For a given problem, is there a Quantum Advantage (QA) in solving it on a quantum computer ?}
		\end{itemize}
		
		Under appropriate assumptions, the answer is
		%yes
		positive. A paradigmatic example is the Quantum Fourier Transform (QFT) (see, e.g. \cite[section 5.1]{nielsen2010quantum}), the main building block of several celebrated quantum algorithms, such as Shor's algorithm for prime factorization \cite{shor1999polynomial} or the HHL algorithm \cite{harrow2009quantum} to solve linear systems of equations. A second, structurally different example is the Maximum Independent Set (MIS) problem (see figure \ref{fig:mis_example}), which is NP-hard in the classical sense \cite{karp1972reducibility} and admits a natural analog quantum encoding on neutral-atom hardware via the Rydberg blockade mechanism \cite{Henriet2020quantumcomputing,pichler2018quantum,ebadi2022quantum}. Our goal is to answer the above question by control-theoretic tools, with the hope of building a systematic approach to assess whether Quantum Advantage (QA) holds, for a wide class of algorithms and hardware platforms.
		
		\begin{figure}[htp]
			\begin{center}
				\begin{tikzpicture}[x=1.5cm,y=1.5cm,
					every node/.style={font=\small},
					atom/.style={circle,draw=black,fill=white,line width=0.5pt,inner sep=0pt,minimum size=6mm},
					mis/.style={circle,draw=blue!70!black,fill=blue!70!black,text=white,line width=0.7pt,inner sep=0pt,minimum size=6mm},
					gedge/.style={gray!70,line width=0.9pt},
					blockade/.style={draw=blue!45!black,dashed,line width=0.4pt}]
					% Atom layout (arbitrary units); unit-disk threshold R_b=1.8.
					\coordinate (a1) at (0.0,0.0);
					\coordinate (a2) at (1.7,0.4);
					\coordinate (a3) at (0.8,1.6);
					\coordinate (a4) at (3.1,0.9);
					\coordinate (a5) at (2.4,2.2);
					\coordinate (a6) at (0.3,2.9);
					\coordinate (a7) at (3.8,2.6);
					\coordinate (a8) at (1.9,3.4);
					% Edges: atom pairs within R_b.
					\foreach \a/\b in {1/2,1/3,2/3,2/4,3/5,3/6,4/5,5/7,5/8,6/8}
					\draw[gedge] (a\a)--(a\b);
					% Blockade disks (radius R_b/2=0.9) around the chosen MIS atoms.
					% \foreach \v in {1,4,6,7} \draw[blockade] (a\v) circle (0.9);
					% Vertices.
					\foreach \v in {2,3,5,8} \node[atom] at (a\v) {\v};
					\foreach \v in {1,4,6,7} \node[mis]  at (a\v) {\v};
				\end{tikzpicture}
				\caption{An instance of the Maximum Independent Set (MIS) problem on a
					unit-disk graph $G=(V,E)$ with $n=8$ atoms. The highlighted
					vertices $\{1,4,6,7\}$ form a maximum independent set, so $\alpha(G)=4$; their pairwise disjointness is exactly the
					independence condition. This instance has three distinct
					maximum independent sets, $\{1,4,6,7\}$, $\{1,4,7,8\}$ and
					$\{3,4,7,8\}$, so the MIS subspace \eqref{mis_subspace_def} has dimension
					$d=|\mathrm{MIS}(G)|=3$.}\label{fig:mis_example}
			\end{center}
		\end{figure}
		
		Concretely, we illustrate the framework on two representative use cases:
		\begin{enumerate}
			\item[(a)] \textbf{QFT on superconducting (digital, gate-based) quantum computers}, such as IBM's \href{https://quantum.cloud.ibm.com/computers?system=ibm_brisbane}{\texttt{ibm\_brisbane}};
			\item[(b)] \textbf{MIS on neutral-atom (analog) quantum computers}, such as Pasqal's \href{https://portal.pasqal.cloud/devices/FRESNEL}{FRESNEL} device, endowed \href{https://www.pasqal.com/wp-content/uploads/2025/03/Technical-Overview-for-Advanced-Users-Orion-Beta.pdf}{Orion} Quantum Processing Unit (QPU)\footnote{\href{https://quantum-journal.org/papers/q-2020-09-21-327/pdf/}{Pasqal Quantum Computers} \cite{Henriet2020quantumcomputing} are available both in gate-based digital mode (enjoying universal quantum computing property) and analog mode. In these notes, we focus on the analog mode.}.
		\end{enumerate}
		These two cases cover the two complementary paradigms of universal digital quantum computation and analog quantum optimization, and exercise, respectively, operator controllability (QFT) and bilinear optimal control (MIS-QAOA) within one unified framework.
		
		Quantum computing exploits the principles of quantum mechanics (superposition and entanglement) to perform computations in ways that can vastly outperform classical algorithms for certain tasks \cite{nielsen2010quantum}. This gap in computational power is often referred to as \emph{Quantum Advantage (QA)} ,  the ability of a quantum device to solve a problem beyond the feasible reach of classical computers in any reasonable time. A famous example is Shor's algorithm for integer factorization, which runs in polynomial time on a quantum computer whereas the best known classical methods require super-polynomial time \cite{nielsen2010quantum,shor1999polynomial,vandersypen2001experimental,monz2016realization}. The Shor's algorithm is in fact based on Quantum Fourier Transform (QFT).
		
		The quantum computer was conceived by Richard P. Feynman in the celebrated paper \cite{feynman2018simulating} for physical simulation. More recently, Quantum Computers are designed to run general computations (see, e.g. \cite{nielsen2010quantum,martyn2021grand} and references therein). Moreover, extending Richard P. Feynman's idea, in recent works like \cite{jin2024quantum}, specific change of variables named Schr{\"o}dingerization are proposed to allow running general Partial Differential Equation (PDE) simulation in a quantum computers.
		
		In recent years, experimental demonstrations of Quantum Advantage (QA) have been a major milestone in the field. On the digital side, superconducting quantum processors, such as those developed by IBM and Google, have executed specialized sampling problems believed to be classically intractable. In particular, IBM's cloud quantum computing platform now provides access to superconducting devices (e.g., the 127-qubit \href{https://quantum.cloud.ibm.com/computers?system=ibm_brisbane}{ibm$\_$brisbane} or the 156-qubit \href{https://www.basquequantum.eus/en/ibm}{\texttt{ibm\_BasqueCountry}} installed in Donostia - Basque Country) that enable researchers to explore quantum algorithms on real hardware\footnote{See \url{https://quantum.cloud.ibm.com/computers?system=ibm_brisbane} for details on the \href{https://quantum.cloud.ibm.com/computers?system=ibm_brisbane}{ibm$\_$brisbane} quantum processor.}. On the analog side, neutral-atom quantum processors, such as those developed by Pasqal \cite{Henriet2020quantumcomputing} and QuEra \cite{ebadi2022quantum}, have demonstrated the ability to solve combinatorial optimization problems (in particular, Maximum Independent Set on unit-disk graphs) with hundreds of atoms, exploiting the Rydberg blockade mechanism for native constraint enforcement. These developments underscore the practical quest for Quantum Advantage (QA) across both paradigms and motivate a deeper theoretical understanding of how and when it can be achieved.
		
		Achieving Quantum Advantage (QA) in practice hinges on the ability to \emph{control} quantum systems with high precision. On a superconducting quantum computer, each logical qubit operation is implemented by externally applied control fields (e.g., shaped microwave pulses) that drive the evolution of the qubits' quantum state. On a neutral-atom analog processor, the many-body atomic register is globally steered by laser fields with programmable Rabi frequency and detuning, together with a programmable spatial arrangement of the atoms. In both cases, the computation is an externally controlled trajectory in Hilbert space, and this places quantum computing squarely in the domain of Control Theory. In classical Control Theory, a dynamical system governed by differential equations is called \emph{controllable} if one can drive the system from any given initial state to any desired final state by choosing appropriate control inputs \cite{CNL,ZCP}. The concept of controllability is fundamental: it formalizes the intuitive notion of having complete command over a system's behavior. Classical results, such as Kalman's rank condition for linear systems, provide clear criteria for controllability in finite dimensions \cite{CNL}. For more complex systems described by Partial Differential Equations (PDEs), powerful tools have been developed to analyze controllability and observability \cite{ZCP}.
		
		The integration of Control Theory with quantum mechanics has given rise to a rich interdisciplinary field of \emph{Quantum Control} that is providing novel insights into both physics and engineering \cite{berberich2024quantum,boscain2021introduction,d2021introduction,doi:10.1137/1.9781611977745.19,dong2010quantum}. In the quantum domain, controllability typically means the ability to implement an arbitrary unitary transformation or prepare any target state in the Hilbert space of the system, given a suitable set of control Hamiltonians \cite{d2021introduction}. For many finite-dimensional quantum systems (such as a register of superconducting qubits), one can establish controllability via the Lie-algebraic rank condition: the Lie algebra generated by the system's drift and control Hamiltonians spans the entire $\mathfrak{su}(N)$ algebra (for an $N$-dimensional Hilbert space), ensuring that any unitary operation is reachable \cite{d2021introduction}. In practical terms, this implies a quantum computer with controllable qubits is, in principle, a universal computing device. Of course, practical limitations like decoherence and imperfect actuators mean not every unitary is achievable with high fidelity, but the theoretical notion of controllability provides an important benchmark. Infinite-dimensional quantum systems has been analyzed as well by Control Theory. For those systems, the {S}chr\"{o}dinger equation is a Partial Differential Equation (PDE). Some references on the application of PDE Control Theory to the {S}chr\"{o}dinger equation are \cite{machtyngier1994exact,beauchard2010local,zuazua2002remarks,beauchard2025examples,beauchard2024small,ZCP} and references therein. In essence, quantum hardware presents a rich, high-dimensional control system, and the challenge is to steer it through its exponentially large state space efficiently and accurately.
		
		In addition to controllability, \emph{Optimal Control} plays a vital role in quantum computing. Optimal Control Theory asks: given a controllable system, what is the best way to steer it to achieve a desired objective while minimizing a cost (such as time, energy, or error)? \cite{d2021introduction} Quantum Optimal Control techniques have been widely applied to design pulse sequences that implement quantum logic gates or state transfers with high fidelity and minimal duration. For instance, gradient-based algorithms can optimize microwave control pulses to carry out a quantum gate on superconducting qubits in the shortest possible time, respecting hardware constraints. The use of Optimal Control has become a cornerstone in improving quantum operations, reducing error rates, and pushing quantum devices closer to the regimes required for demonstrating Quantum Advantage (QA).
		
		Given this backdrop, in this work we investigate the existence of Quantum Advantage (QA) from a control-theoretic perspective. We ask whether the superior computational power of quantum systems can be formally understood (and even quantified) through the lens of controllability and Optimal Control. By viewing a quantum algorithm as a controlled dynamical trajectory in Hilbert space, we can analyze what aspects of that trajectory might be intractable for any classical controller or classical computer to replicate. Our approach is theoretical: we derive analytical results that connect control-theoretic properties of quantum systems to their computational capabilities. In particular, we propose criteria based on control complexity and reachable sets that, if satisfied by a quantum system, would imply a provable Quantum Advantage (QA) for a certain class of problems. This amounts to a theoretical proof-of-concept for Quantum Advantage (QA), grounded in Control Theory, which does not rely on specific hardware experiments. For the moment, our results remain in the realm of mathematical analysis; however, they lay the groundwork for future experimental validation on physical quantum processors. Moreover, our approach paves the way of a control-inspired improvement of quantum algorithms.
		
		This paper is written for an interdisciplinary audience of control theorists and quantum physicists. We therefore review the necessary background from both fields and strive to use terminology accessible to each community. We highlight how Control Theory concepts such as controllability and Optimal Control can provide fresh insights into quantum computation, and conversely, how quantum computing motivates new questions in Control Theory. Our hope is that this synergy between control and quantum dynamics will not only help in rigorously establishing Quantum Advantage (QA), but also foster collaboration between the two disciplines in addressing the challenges of quantum technology.
		
		%In order to have a fixed realistic framework, we assume our quantum computer is
		%\href{https://quantum.cloud.ibm.com/computers?system=ibm_brisbane}{ibm$\_$brisbane}.
	
		In chapter \ref{chapter:Mathematical framework}, we introduce the mathematical framework for Quantum Computing (QC), inspired from \cite[chapter 2]{nielsen2010quantum}: the state space, the bilinear controlled Schr\"{o}dinger equation, the notion of operator controllability and the control-theoretic definition of Quantum Advantage. Chapter \ref{chapter:QA_QFT} is devoted to the first representative problem: we model superconducting quantum computers (such as \href{https://quantum.cloud.ibm.com/computers?system=ibm_brisbane}{ibm$\_$brisbane}) by a controlled Schr\"{o}dinger equation, prove operator controllability by standard Lie-algebraic methods, and establish Quantum Advantage for the QFT by combining a gate-concatenation lemma with the well-known $O(n^2)$ circuit decomposition. Chapter \ref{chapter:Quantum Discrete Optimization} is devoted to the second representative problem: we introduce the MIS problem, model neutral-atom quantum computers (such as Pasqal's hardware) by a Rydberg-blockade Hamiltonian, reformulate QAOA as a bilinear optimal control problem, show that MIS is solvable on Pasqal devices, and discuss the scope of Quantum Advantage in this analog setting. Chapter \ref{chapter:A surrogate problem} formulates a surrogate optimal control problem on the commutator $[U(t)\Gamma^*, H_0]$ providing a checkable necessary condition for Quantum Advantage applicable to both paradigms. We conclude the manuscript by collecting some open problems in chapter \ref{chapter:Open problems}. The appendix includes quantum speed limits on $SU(N)$ via bi-invariant Riemannian geometry, a discussion of the steady-control case, and an explicit example of drift Hamiltonian.
		
		\chapter{Mathematical framework}
		\label{chapter:Mathematical framework}
		
		The quantum computer works by controlling quantum processes running on specific (quantum) physical systems.
		
		Typically, in quantum computing, the state space is modeled as a Hilbert space over~$\mathbb{C}$ of the form
		\begin{equation}\label{state_space}
			\mathcal{H} \defeq \underbrace{\mathbb{C}^2 \otimes \dots \otimes \mathbb{C}^2}_{n\text{ times}} = \bigotimes_{j=1}^n \mathbb{C}^2,
		\end{equation}
		which has dimension\footnote{hereafter, by the word dimension, we mean dimension over $\mathbb{C}$} $N = 2^n$ for some $n \in \mathbb{N}$.
		
		\begin{definition}\label{quantum_state_definition}
			Consider a quantum system represented in a state space $\mathcal{H}$. A \textit{quantum state} is a straight line
			\begin{equation}
				\mathbb{C}\mathbf{v}\defeq \left\{\lambda \mathbf{v} \ | \ \lambda\in\mathbb{C}\right\},
			\end{equation}
			for some unit vector $\mathbf{v}\in \mathcal{H}$. We name the vector $\mathbf{v}$ \textit{representative} of the quantum state.
		\end{definition}
		In the language of projective geometry, a quantum state is an element of the projective space $\mathbb{P}(\mathcal{H})$ defined from $\mathcal{H}$.
		
		To simplify the terminology, a representative of a quantum state is often referred to as quantum state.
		
		\begin{remark}[Composition of quantum systems]\label{remak_quantum_composition}
			The above space represents the composition of $n$ quantum systems. In Quantum Mechanics the state space of the composition of physical systems with respectively state spaces $V_1,\dots,V_n$ is the tensor product $V_1\otimes\dots\otimes V_n$ (see \cite[subsection 1.4.2 at page 34]{d2021introduction} and \cite[Postulate 4 at page 94]{nielsen2010quantum}).
			
			A quantum state, represented by a vector $\mathbf{v}\in \mathcal{H}$, is said to be in entangled state if there are no $\mathbf{v}_1\in \mathbb{C}^2,\dots,\mathbf{v}_n\in \mathbb{C}^2$, such that
			\begin{equation}
				\mathbf{v}=\mathbf{v}_1\otimes \dots \otimes \mathbf{v}_n.
			\end{equation}
			\textit{Entanglement} is one of the most important concepts in Quantum Mechanics and was conceived along the celebrated dialogue among Einstein-Podolsky-Rosen \cite{einstein1935can}, Schr{\"o}dinger \cite{schrodinger1935gegenwartige} (translated in \cite{trimmer1980present}) and Bell \cite{bell1964einstein}.
			
			\textit{Quantum Superposition}, \textit{Entanglement} and their related paradoxes are the bases of Quantum Advantage (QA) and several Quantum Communication protocols, like Quantum Key Distribution (QKD) 
			%https://courses.xpro.mit.edu/learn/course/course-v1:xPRO+QCFx2+R14/block-v1:xPRO+QCFx2+R14+type@sequential+block@57ea74a698d0466c895f78cd746a5be7/block-v1:xPRO+QCFx2+R14+type@vertical+block@2512b8d116344421bbe0d8b645332a6c
			Ekert91 \cite{ekert1991quantum} and BBM92 \cite{bennett1992quantum}.
		\end{remark}
		
		Given a state space as \eqref{state_space}, we postulate the time-evolution of the quantum system is described by the {S}chr\"{o}dinger equation
%		\begin{equation}\label{}
%			i\hbar\frac{d}{dt}|\mathbf{\psi}(t)\rangle = H(t)|\mathbf{\psi}(t)\rangle\\
%		\end{equation}
		\begin{equation}\label{ger equation}
			i\frac{d}{dt}\mathbf{\psi}(t) = \big(H_0 + \sum_{j=1}^{m}u_j(t)H_j\big)\mathbf{\psi}(t)\footnote{In Quantum Computing (QC), the {S}chr\"{o}dinger equation is physically realized rather than merely simulated. Indeed, a Quantum Computer is fundamentally a physical system governed by the {S}chr\"{o}dinger equation itself. The solution at the final time, $\psi(T)$, is retrieved by measuring the quantum system. This process of quantum measurement is defined by Postulate 3 of quantum mechanics (see, e.g., \cite[subsections 2.2.3 and 2.2.5]{nielsen2010quantum}). Typically, multiple measurements must be performed to accurately estimate $\psi(T)$. The only exception occurs when $\psi(T)$ lies within an eigenspace of the self-adjoint matrix being measured; in that scenario, if the complex dimension of the eigenspace is one, a single measurement suffices to identify which eigenspace contains the vector $\psi(T)$.},\hspace{0.06 cm}t\in(0,T),
		\end{equation}
		where
		\begin{itemize}
			\item the state of the control system is a function of time $\mathbf{\psi}:[0,T]\longrightarrow \mathcal{H}$ evolving in the state space $\mathcal{H}$;
			\item the drift hamiltonian $H_0$ describes the free dynamics;
			\item the scalar controls $u_j:[0,T] \longrightarrow \mathbb{R}$ are $L^{\infty}(0,T)$ functions acting in the system by multiplication by the respective control hamiltonians $H_j$;
			\item both the drift hamiltonian $H_0$ and the control hamiltonians $H_j$ are self-adjoint;
			\item the time horizon $T>0$ is the \textit{decoherence time} for the quantum system (see DiVincenzo criterion D5 \cite{divincenzo2000physical}, the notes \cite{oliver2013superconducting} and \cite[chapter 7]{nielsen2010quantum}).
		\end{itemize}
	
		The equivalence of the above postulate and the time discrete one (where quantum evolution is described by unitary transformations) can be found in \cite[subsection 2.2.2]{nielsen2010quantum}.
		
		Studying the control properties of the above equation is the object of quantum Control Theory (see, e.g., \cite{zuazua2002remarks,privat2016optimal,koch2022quantum,dong2010quantum,boscain2021introduction} and references therein), applied directly to QFT problem in \cite{martin2020digital}.
		
		\begin{remark}
			Whenever a quantum hardware (like \href{https://quantum.cloud.ibm.com/computers?system=ibm_brisbane}{ibm$\_$brisbane}) is fixed, the state space $\mathcal{H}$, the drift Hamiltonian $H_0$, the number of controls $m$ and the control hamiltonians $H_j$ are fixed.
		\end{remark}
	
		\begin{remark}\label{matrix_equation}
			In the context of \eqref{ger equation}, note that, given the solution $U:[0,T]\longrightarrow \mathcal{M}_{N\times N}(\mathbb{C})$\footnote{We denoted by $\mathcal{M}_{N\times N}(\mathbb{C})$ the space of $N\times N$ matrices with complex entries.} to the matrix Cauchy problem
			\begin{equation}\label{matrix_schrodinger}
				\begin{cases}
					i\frac{d}{dt}U(t) = \big(H_0 + \sum_{j=1}^{m}u_j(t)H_j\big)U(t),& t\in (0,T)\\
					U(0)=I,\\
				\end{cases}
			\end{equation}
			the function $\psi(t)\defeq U(t)\psi_0$ is the solution to the 
			\begin{equation}\label{}
				\begin{cases}
					i\frac{d}{dt}\mathbf{\psi}(t) = \big(H_0 + \sum_{j=1}^{m}u_j(t)H_j\big)\mathbf{\psi}(t),& t\in (0,T)\\
					\psi(0)=\psi_0,\\
				\end{cases}
			\end{equation}
			for some initial condition $\psi_0\in\mathcal{H}$.
		\end{remark}
		
		\begin{remark}\label{remark_well_posedeness}
%			By \cite[Theorem 54 at page 476]{sontag1998mathematical}, for every initial datum $\psi_0\in\mathcal{H}$ and controls
%			% \forall j \in \left\{1,\dots,m\right\}
%			 $u_j\in L^{\infty}(0,T)$, there exists a unique solution $\mathbf{\psi}\in H^1(0,T)$ to
%			\begin{equation}\label{}
%				\begin{cases}
%					i\frac{d}{dt}\mathbf{\psi}(t) = \big(H_0 + \sum_{j=1}^{m}u_j(t)H_j\big)\mathbf{\psi}(t),& t\in (0,T)\\
%					\psi(0)=\psi_0.\\
%				\end{cases}
%			\end{equation}
			
			By \cite[Theorem 54 at page 476]{sontag1998mathematical}, for
			% every
			 controls
			% \forall j \in \left\{1,\dots,m\right\}
			$u_j\in L^{\infty}(0,T)$, there exists a unique solution $U(t)\in H^1(0,T)$ to the matrix Cauchy problem \eqref{matrix_schrodinger}.
		\end{remark}
	
		\section{Quantum computation as a controllability problem}
		\label{subchapter:Quantum computation as a controllability problem}
		
		In a quantum algorithm, we have three steps
		\begin{enumerate}
			\item[\textit{Step 1}] \ \textbf{Initialize}, where the quantum system is initialized to a quantum state $\mathbf{\psi}_0\in \mathcal{H}$;
			\item[\textit{Step 2}] \ \textbf{Compute}, where the quantum system evolves, from the initial state $\mathbf{\psi}_0\in \mathcal{H}$ to a final state $\mathbf{\psi}_1\in \mathcal{H}$, employing in \eqref{ger equation} specific controls;
			\item[\textit{Step 3}] \ \textbf{Measure}, where measurement is performed to check the result.
		\end{enumerate}
	
		In this manuscript, we focus on the study of the computation (the second step). In what follows we define a quantum computation problem in terms of the operator controllability of the {S}chr\"{o}dinger equation \eqref{matrix_schrodinger}. Arbitrarily fix $M>0$ and define
		\begin{equation}
			L^{\infty}_{M}(0,T)\defeq \left\{u:[0,T]\longrightarrow \mathbb{R} \ | \ |u(t)|\leq M, \ \mbox{a.e.} \ t\in [0,T]\right\}.
		\end{equation}
		Our control space is
		\begin{equation}\label{control_functions_space}
			\mathscr{U}^T\defeq \left\{(u_j)_{j=1}^m \ | \ u_j\in L^{\infty}_{M}(0,T) \ \mbox{for all  }j\in\left\{1,\dots,m\right\}\right\}=L^{\infty}(0,T)^m.
		\end{equation}
%		\begin{equation*}
%				\mathscr{U}^T\defeq \left\{u:[0,T] \longrightarrow \mathbb{R} \ \Bigg| \ u(t)\defeq \begin{dcases}
%						\overline{u}_1 \quad &t\in [0,t_1)\\
%						\dots \\
%						\overline{u}_j \quad &t\in [t_{j-1},t_{j})\\
%						\dots \\
%						\overline{u}_p \quad &t\in [t_{p-1},T]\\
%					\end{dcases}, \\
%				n\in \mathbb{N}\setminus \left\{0\right\}, \ 0<t_1<\dots<t_{p-1}<T\right\}
%			\end{equation*}
		
		Let us formulate a quantum computation problem as an operator controllability problem (see, e.g., \cite[Definition 2 at page 11]{dong2010quantum} and \cite[chapter 3]{d2021introduction}).
		
		\begin{definition}\label{def_1}
			Suppose we have a quantum system defined by a state space $\mathcal{H}$ as in \eqref{state_space}. Let
			\begin{equation}
				\Gamma:\mathcal{H}\longrightarrow \mathcal{H}
			\end{equation}
			be a unitary operator.
			The problem of computing $\Gamma$ is defined as the problem of finding control functions
			\begin{equation}
				u_j:[0,T] \longrightarrow [-M,M],\hspace{0.06 cm}j=1,\dots,m,
			\end{equation}
			in the space of functions of functions $\mathscr{U}^T$, such that the following operator controllability problem is satisfied
			\begin{equation}\label{label_controllability_problem}
				\begin{cases}
					i\frac{d}{dt}U(t) = \big(H_0 + \sum_{j=1}^{m}u_j(t)H_j\big)U(t),& t\in (0,T)\\
					U(0)=I,\\
					U(T)=\Gamma.\\
				\end{cases}
			\end{equation}
			We define $\mathscr{U}^T_{\mbox{\tiny{ad}}}$ the set of controls $(u_j)_{j=1}^m\in \mathscr{U}^T$, such that the above operator (or simultaneous) controllability property holds and
			\begin{equation}\label{space_admissible_controls}
				\mathscr{U}_{\mbox{\tiny{ad}}}\defeq \bigcup_{T>0}\mathscr{U}^T_{\mbox{\tiny{ad}}}.
			\end{equation}
			
			The {S}chr\"{o}dinger equation \eqref{ger equation} is said to be operator controllable if, for any unitary operator $\Gamma:\mathcal{H}\longrightarrow \mathcal{H}$, there exist a time horizon $T > 0$ and control functions $(u_j)_{j=1}^m\in \mathscr{U}^T$, such
			that \eqref{label_controllability_problem} is satisfied.
		\end{definition}
		
		The above operator controllability is verified in case the Lie algebra generated by
		\begin{equation}
			\left\{-iH_0,-iH_1,\dots,-iH_j,\dots,-iH_m\right\}
		\end{equation}
		equals the Lie algebra $su(N)=su(2^n)$ (\cite[Theorem 3.2.1 at page 89]{d2021introduction}).
		
		Let us now define the concept of Quantum Advantage (QA) by Optimal Control.
		
		Assume \eqref{ger equation} is operator controllable. For some target unitary operator $\Gamma:\mathcal{H}\longrightarrow \mathcal{H}$, define the minimal controllability time\footnote{under constraints $|u(t)|\leq M$ in the control space \eqref{control_functions_space}}
		\begin{equation}\label{def_min_time}
			T_{\mbox{\tiny{min}}} \defeq \inf\left\{T>0 \ \big| \ \exists u\in \mathscr{U}_{\mbox{\tiny{ad}}}, \  U(T)=\Gamma\right\},
		\end{equation}
		where $U:[0,+\infty)\longrightarrow \mathcal{M}_{N\times N}(\mathbb{C})$ represents the state, solution to the Cauchy problem
		\begin{equation}\label{matrix_schrodinger_inftime}
			\begin{cases}
				i\frac{d}{dt}U(t) = \big(H_0 + \sum_{j=1}^{m}u_j(t)H_j\big)U(t),& t\in (0,+\infty)\\
				U(0)=I,\\
			\end{cases}
		\end{equation}
		with control $u=(u_j)_{j=1}^m$
		%, the Hamiltonian matrices $\left\{H_j\right\}_{j=0}^{m}$ being defined in \eqref{ger equation}
		.
		
		The above definition is well-posed. Indeed, for every target operator $\Gamma:\mathcal{H}\longrightarrow \mathcal{H}$, by operator controllability, there exists a time horizon $\overline{T}>0$ and a control $u\in \mathscr{U}_{\mbox{\tiny{ad}}}$, such that $U(\overline{T})=\Gamma$, whence
		\begin{equation}
			\inf\left\{T>0 \ \big| \ \exists u\in \mathscr{U}_{\mbox{\tiny{ad}}}, \  U(T)=\Gamma\right\}\leq \overline{T}<+\infty;
		\end{equation}
		
		Moreover, the infimum in \eqref{def_min_time} is achieved, as we illustrate in the following remark.
		\begin{remark}\label{remark_min_time}
			Supposing \eqref{ger equation} is operator controllable, there exists a control $u_{\mbox{\tiny{min}}}\in \mathscr{U}_{\mbox{\tiny{ad}}}$, such that the unique solution $U_{\mbox{\tiny{min}}}$ to \eqref{matrix_schrodinger_inftime} with control $u_{\mbox{\tiny{min}}}$ satisfies the final condition $U_{\mbox{\tiny{min}}}(T_{\mbox{\tiny{min}}})=\Gamma$. Namely,
			\begin{equation}
				\inf\left\{T>0 \ \big| \ \exists u\in \mathscr{U}_{\mbox{\tiny{ad}}}, \  U(T)=\Gamma\right\}=\min\left\{T>0 \ \big| \ \exists u\in \mathscr{U}_{\mbox{\tiny{ad}}}, \  U(T)=\Gamma\right\}.
			\end{equation}
			This can be proved by Direct Methods in the Calculus of Variations (DMCV) \cite{dacorogna2007direct}, employing the precompactness given by the constraints $|u(t)|\leq M$.
		\end{remark}
		We conjecture the control in the minimal time is of bang-bang form. We imposed the constraint $|u(t)|\leq M$ in view of our concrete use case. Checking whether the infimum is still achieved, removing the constraint, is an interesting research line.
		
		A large literature is available on time optimal Quantum Control. See, e.g., \cite{agrachev2017note,boscain2021introduction}, \cite[section 7.4]{d2021introduction} and references therein. Moreover, lower bounds for minimal time can be obtained by the theory of Quantum Speed Limit \cite{nielsen2006geometry,carlini2007time}.
		
		\begin{definition}\label{quantum_advantage_definition}
			Let $\mathcal{H}$ be a state space as \eqref{state_space} and assume we are equipped with a quantum hardware defined by the {S}chr\"{o}dinger equation \eqref{ger equation}. In the framework of definition \ref{def_1}, consider a computational problem given by a unitary operator $\Gamma$ for which the best known classical algorithm requires a computing time\footnote{All along this manuscript, we take the second (s) as unit of measurement of time.} exponential in the number of qubits $n$. We say that for this problem there exists (exponential) Quantum Advantage (QA) if
			\begin{enumerate}
				\item[a)] operator controllability holds (namely $\mathscr{U}_{\mbox{\tiny{ad}}}$ is nonempty);
				\item[b)] there exists a constant $C$ and an integer $p$ (both independent of $n$), such that
				\begin{equation}\label{label_t_min_estimate}
					T_{\mbox{\tiny{min}}}\leq Cn^p.
				\end{equation}
			\end{enumerate}
			% [CHECK]. Quantum Adavatage seems to depend of the specific quantum computer employed.
		\end{definition}
		
		\begin{remark}[Polynomial Quantum Advantage]\label{quantum_advantage_definition_remark}
			The above definition is for exponential Quantum Advantage. An analogous definition might be given for polynomial Quantum Advantage. A celebrated quantum algorithm with polynomial Quantum Advantage is the Grover's algorithm \cite{grover1996fast,grover1997quantum}.
		\end{remark}

		% [CHECK] qdit

		\chapter{Quantum Advantage for the Quantum Fourier Transform}
		\label{chapter:QA_QFT}
		
		The purpose of this chapter is to show how Quantum Advantage is achieved by superconducting computers. As we mentioned, this is a well-known result (see, e.g. \cite[section 5.1]{nielsen2010quantum}).
		
		\section{Superconducting quantum computers}
		\label{chapter:Superconducting quantum computers}
		
		This section introduces a controlled {S}chr\"{o}dinger equation for superconducting quantum computers. The definition of Pauli matrices acting on one qubit can be found in the last subsection. For the sake of concreteness, we focus on \texttt{ibm\_brisbane} Quantum Computer. Our analysis applies to other quantum computers, like\\ \href{https://www.basquequantum.eus/en/ibm}{\texttt{ibm\_BasqueCountry}}.
		
		\subsection{Quantum State Space}
		The quantum state space of the \texttt{ibm\_brisbane} quantum computer is
		\begin{equation}\label{ibmbrisbanestatespace}
			\mathcal{H} = \underbrace{\mathbb{C}^2 \otimes \dots \otimes \mathbb{C}^2}_{127\text{ times}} = \mathbb{C}^{2^{127}}
		\end{equation}
		corresponding to 127 qubits (whence $\dim(\mathcal{H})=2^{127}> 10^{38}$).
		
		\subsection{Coupling map}\label{subchapter:couplingmap}
		The coupling map $\mathcal{E}$ is a directed graph defining connection among physical qubits in the computer (see figure \ref{grah_1}).
		\begin{itemize}
			\item The graph nodes are the physical qubits.
			\item As we shall see, the graph edges represent permitted control operations across different qubits.
		\end{itemize}
		See \cite{mckay2018qiskit,tindall2024confinement,pelofske2024scaling} and the \href{https://quantum.cloud.ibm.com/docs/en/api/qiskit/qiskit.transpiler.CouplingMap}{IBM documentation on Coupling Map}. It is possible to get $\mathcal{E}$, by running in Qiskit the command \verb!IBMBackend('ibm_brisbane').coupling_map! (as in \href{https://quantum.cloud.ibm.com/docs/en/api/qiskit-ibm-runtime/ibm-backend}{IBMBackend}).
		
		\subsection{Controlled Schr\"odinger Equation}
		The evolution of the system is governed by the bilinear Schr\"odinger equation:
		\begin{equation}\label{ibm_equation_control}
			i\frac{d}{dt}\mathbf{\psi}(t) = \big(H_0 + \sum_{j=1}^{m}u_j(t)H_j\big)\mathbf{\psi}(t),\hspace{0.06 cm}t\in(0,T),
		\end{equation}
		where
		\begin{itemize}
			\item $H_0$ is the \textbf{drift Hamiltonian} (time-independent),
			\item $H_j$ are the \textbf{control Hamiltonians} associated with time-dependent scalar controls $u_j(t)$.
		\end{itemize}
		
		\subsubsection{Drift Hamiltonian $H_0$}
		The drift Hamiltonian is a $N\times N$ self-adjoint matrix
		% with complex entries
		 with zero trace.
		%, such that its semigroup
%		\begin{equation}
%			\Lambda:\mathbb{R}\longrightarrow \mathcal{M}_{N\times N}(\mathbb{C})
%		\end{equation}
%		\begin{equation}\label{semigroup}
%			t\longmapsto \exp(-iH_0 t)
%		\end{equation}
%		is periodic. This last condition is satisfied if eigenvalues $\lambda_i$ are \textit{commensurate}, namely
%		\begin{equation}
%			\frac{\lambda_j}{\lambda_i}\in \mathbb{Q},\hspace{0.6 cm}\forall i, j\mbox{ such that }\lambda_i\neq 0.
%		\end{equation}
		
		An example of drift Hamiltonian is in section \ref{chapter:drift_hamiltonian} of the Appendix.
		
		\subsubsection{Control Hamiltonians $H_j$}\label{subchapter:control_hamiltonians}
		The control Hamiltonians represent the microwave drives and cross-resonance interactions:
		\begin{itemize}
			\item \textbf{Single-qubit drives:}
			\begin{equation}
				H_{\text{drive},k}(t) = u_k^X(t) X_k + u_k^Y(t) Y_k
			\end{equation}
			for each qubit $k = 1, \dots, 127$ (see \cite{mckay2018qiskit} and \cite[chapter 3]{d2021introduction}).
			\item \textbf{Cross-resonance drives:}
			\begin{equation}
				H_{\text{CR},c\to t}(t) = v_{ct}(t) Z_c \otimes X_t
			\end{equation}
			for each control-target qubit pair $(c,t) \in \mathcal{E}$ (see \cite{rigetti2010fully,sivarajah2020t}), $\mathcal{E}$ representing the coupling map, presented in subsection \ref{subchapter:couplingmap}.
		\end{itemize}
	
	The \texttt{ibm\_brisbane} processor utilizes a heavy-hex lattice topology, optimizing the layout for cross-resonance gates. Below is a schematic representation of its qubit connectivity.

	\input{ibm_brisbane_coupling_map.tex}

	The {S}chr\"{o}dinger equation \eqref{ibm_equation_control} can be rewritten as
	\begin{equation}\label{ibm_brisbane_equation}
		i \frac{d}{dt} {\psi(t)} = \Bigg(
		H_0
		+ \sum_{k=1}^{127} \left( u_k^X(t) X_k + u_k^Y(t) Y_k \right)
		+ \sum_{(c,t) \in \mathcal{E}} v_{ct}(t) Z_c \otimes X_t
		\Bigg) {\psi(t)}.
	\end{equation}
	
	Further references for superconducting quantum computers are \cite{wittler2021integrated,krantz2019quantum,kjaergaard2020superconducting,oliver2013superconducting}.
	%https://learning.quantum.ibm.com/tutorial/improving-estimation-of-expectation-values-with-operator-backpropagation
	
	\subsection{Pauli matrices}\label{subchapter:Pauli matrices}
	
	We use the standard Pauli matrices:
	\[
	I = \sigma_0 = \begin{pmatrix} 1 & 0 \\ 0 & 1 \end{pmatrix}, \quad
	X = \sigma_x = \begin{pmatrix} 0 & 1 \\ 1 & 0 \end{pmatrix}, \quad
	Y = \sigma_y = \begin{pmatrix} 0 & -i \\ i & 0 \end{pmatrix}, \quad
	Z = \sigma_z = \begin{pmatrix} 1 & 0 \\ 0 & -1 \end{pmatrix},
	\]
	in the notation of \cite[subsection 2.1.3 at page 65]{nielsen2010quantum}.
	
	Note that
	\begin{equation}\label{square_pauli_matrices}
		X^2=I,\hspace{0.3 cm}Y^2=I,\hspace{0.3 cm}Z^2=I.
	\end{equation}
	
	When we write $X_k$, $Y_k$, or $Z_k$, we mean
	\begin{itemize}
		\item the Pauli matrix ($X$, $Y$, or $Z$) acts on qubit $k$,
		\item the identity matrix $I$ acts on all other qubits.
	\end{itemize}
	
	Formally, for a 127-qubit system:
	\[
	X_k = I^{\otimes k-1} \otimes X \otimes I^{\otimes (127 - k)}=\underbrace{I \otimes \dots \otimes I}_{k-1\text{ times}}\otimes X \otimes \underbrace{I \otimes \dots \otimes I}_{127 - k\text{ times}},
	\]
	and similarly for $Y_k$ and $Z_k$.
	
	For two-qubit interaction terms such as $Z_c \otimes X_t$, this operator means:
	\begin{itemize}
		\item apply $Z$ to qubit $c$,
		\item apply $X$ to qubit $t$,
		\item apply the identity to all remaining qubits.
	\end{itemize}
	
	\subsection{Controllability properties}\label{subchapter:controllability_properties}
	
	Inspired by \cite{albertini2002lie,d2021introduction,zeier2011symmetry}, we are going to study the controllability properties of \eqref{ibm_brisbane_equation}. Note that, for the next proposition, the time horizon $T>0$ is not fixed\footnote{The time horizon $T$ has to be sufficiently large for controllability to hold. In the physical implementation of quantum computation, this means the DiVincenzo criterion D5 has to be satisfied \cite{divincenzo2000physical}.}.
	
	\begin{proposition}\label{prop_1}
		Assume the graph $\mathcal{E}$ is connected. Then, the {S}chr\"{o}dinger equation \eqref{ibm_brisbane_equation} is operator controllable.
	\end{proposition}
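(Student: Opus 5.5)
We will verify the Lie algebra rank condition quoted after Definition \ref{def_1}: it suffices to show that the Lie algebra $\mathcal{L}$ generated by $-iH_0$ and the control Hamiltonians $-iX_k$, $-iY_k$ ($k=1,\dots,127$) and $-iZ_c\otimes X_t$ ($(c,t)\in\mathcal{E}$) equals $su(N)$. The drift $H_0$ plays no role, since it is traceless and self-adjoint, so $-iH_0\in su(N)$ automatically. The Lie algebra generated by the control Hamiltonians alone is therefore contained in $\mathcal{L}$, and we will show that it already equals $su(N)$. Every element of $su(N)$ is a real linear combination of $-iP$ with $P=\sigma_{a_1}\otimes\dots\otimes\sigma_{a_n}$ a nontrivial Pauli string. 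The goal is thus to generate every $-iP$ by nested commutators, using the Pauli relations $[X,Y]=2iZ$ and their cyclic versions, together with \eqref{square_pauli_matrices}.

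\textbf{Step 1 (single qubits).} From $[-iX_k,-iY_k]=-2iZ_k$ we obtain $-iZ_k$, so $\mathcal{L}\supset su(2)$ acting on each qubit $k$.

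\textbf{Step 2 (edges).} For $(c,t)\in\mathcal{E}$, we commute $-iZ_c\otimes X_t$ with the local generators on qubit $c$ and on qubit $t$. For instance, commuting with $-iY_t$ gives $Z_cZ_t$, and commuting with $-iX_c$ gives $Y_cX_t$. Repeating this, we obtain $-i\sigma_a\otimes\sigma_b$ on the pair $(c,t)$ for all $a,b\in\{x,y,z\}$. Together with Step 1, this yields $su(4)$ on each edge, with orientation irrelevant.

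\textbf{Step 3 (propagation along paths).} We use the identity $[\,-iA\otimes\sigma_b,\,-i\sigma_{b'}\otimes B\,]$ on overlapping supports: if $\sigma_b,\sigma_{b'}$ anticommute on the shared qubit, the result is, up to a nonzero real factor, $-iA\otimes\sigma_{b''}\otimes B$. We argue by induction on the support size of $P$. Suppose $P$ has support $S$ and $\mathcal{E}$ is connected. First, take a spanning tree and obtain Pauli strings supported on a connected set that contains $S$, filling intermediate qubits with some Pauli. Then remove unwanted factors on intermediate qubits by a further commutation. Alternatively, and more cleanly, we invoke the standard fact that $su(4)$ on the edges of a connected graph generates $su(2^n)$. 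This follows because two-qubit gates on a connected coupling graph, together with SWAP$\in SU(4)$ generated on each edge, are universal: one can route any pair of qubits adjacent by conjugation with SWAPs. At the Lie algebra level, conjugating by $e^{L}$ with $L\in\mathcal{L}$ keeps $\mathcal{L}$ invariant, so $su(4)$ on every pair $(j,k)$ lies in $\mathcal{L}$. Then the commutators $[-i\sigma_a^{(j)}\sigma_b^{(k)},-i\sigma_{b'}^{(k)}\sigma_c^{(l)}]$ build arbitrary strings inductively.

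\textbf{Main obstacle.} We expect the main obstacle to be Step 3: making the inductive generation of all Pauli strings rigorous with careful bookkeeping of anticommutation and phases. We plan to handle it through the SWAP-conjugation argument, which reduces the problem to all-to-all $su(4)$, followed by the induction on the support size.
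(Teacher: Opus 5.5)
Your proposal is correct, but it transports two-body terms to non-adjacent pairs by a different mechanism than the paper. Your Steps 1--2 coincide with the paper's: single-qubit $su(2)$ comes from $[\overline{\sigma}_{x,k},\overline{\sigma}_{y,k}]=\overline{\sigma}_{z,k}$, and all nine terms $i\sigma_{a,c}\sigma_{b,t}$ on each edge come from bracketing $iZ_c\otimes X_t$ with local generators. Your final induction on support size also coincides with the paper's.

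The paper's transport step is essentially the route you sketch first and then set aside. Along a path $\gamma$ in $\mathcal{E}$, it brackets $i\sigma_{q_1,k}\sigma_{x,\gamma(p-1)}$ with an edge term on $(\gamma(p-1),\gamma(p))$. This produces a three-body string with a $Z$ on $\gamma(p-1)$. It then removes that middle factor by a second bracket with $i\sigma_{z,\gamma(p-1)}\sigma_{\cdot,\gamma(p)}$, using $Z^2=I$. Induction on $p$ yields $i\sigma_{q_1,k}\sigma_{q_2,l}$ for every pair $(k,l)$.

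You instead use two facts. First, $e^{L}\mathcal{L}e^{-L}=e^{\mathrm{ad}_L}\mathcal{L}=\mathcal{L}$ for $L\in\mathcal{L}$, since $\mathcal{L}$ is a finite-dimensional, hence closed, subspace. Second, transpositions along the edges of a connected graph generate $\mathfrak{S}_n$. Together these make $\mathcal{L}$ invariant under every qubit permutation, so all-to-all two-body terms follow at once.

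Each route has its advantage. Yours avoids the paper's somewhat awkward index arithmetic and shows exactly why connectivity is the right hypothesis. The paper's route is purely algebraic and gives explicit bracket words whose depth grows linearly in the path length $P$. That is the kind of quantitative information the paper's footnote on relating $P$ to the time horizon $T$ has in mind.

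One correction: $\mathrm{SWAP}$ has determinant $-1$, so it is not in $SU(4)$. What you actually use is $\exp\bigl(\tfrac{i\pi}{4}(X_jX_k+Y_jY_k+Z_jZ_k)\bigr)=e^{i\pi/4}\,\mathrm{SWAP}_{jk}$. Its exponent lies in the edge algebra you built in Step 2, and the scalar phase drops out under conjugation. Also, either drop your vague first alternative ("fill intermediate qubits, then remove unwanted factors") or make it precise as the paper does.
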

	The proof is going to be based on proving that
	\begin{equation}
		\left\{-iH_0,-iX_1,-iY_1,\dots,-iX_k,-iY_k,\dots,-iX_n,-iY_n\right\}
	\end{equation}
	generates the Lie algebra $su(N)=su(2^n)$ (which, by \cite[Theorem 3.2.1 at page 89]{d2021introduction}, implies controllability). The proof is made of two main steps
	\begin{itemize}
		\item[Step 1] Show controllability in each  component\footnote{in the sense of the tensor product} of the state space
		\begin{equation}\label{}
			\mathcal{H} = \underbrace{\mathbb{C}^2 \otimes \dots \otimes \mathbb{C}^2}_{127\text{ times}} = \mathbb{C}^{2^{127}}.
		\end{equation}
		\item[Step 2] Prove that the free dynamics hamiltonian $H_0$ is able to couple the different components.
	\end{itemize}
	
	We are not going to employ the drift-Hamiltonian ($H_0$); this might be useful to reduce the number of control switches.
	
	\begin{remark}\label{reamrk_piece-wise contant controls}
		In the proof of Proposition \ref{prop_1}, determined control functions $u_j:[0,T]\longrightarrow \mathbb{R}$ are piece-wise constant with values in $[-M,M]$, namely
		\begin{equation}
			u_j(t)= \begin{dcases}
				\overline{u}_1 \quad &t\in [0,t_1)\\
				\dots \\
				\overline{u}_l \quad &t\in [t_{l-1},t_{l})\\
				\dots \\
				\overline{u}_p \quad &t\in [t_{p-1},T],\\
			\end{dcases},
		\end{equation}
		where
		\begin{itemize}
			\item for $j\in \left\{1,\dots,p\right\}$, the values $\overline{u}_j\in [-M,M]$;
			\item $p$ is a natural number indicating the number of switches;
			\item $0<t_1<\dots<t_{p-1}<T$ defines a partition of the interval $[0, T]$.
		\end{itemize}
	\end{remark}
	
	Before starting the proof, let us introduce some notation.
	
	\textbf{Notation for the proof}
	
	\begin{itemize}
		\item We define the Lie algebra
		\begin{equation}
			\mathscr{L}_0\defeq \mbox{Lie algebra generated}\left\{-iH_0,-iX_1,-iY_1,\dots,-iX_k,-iY_k,\dots,-iX_n,-iY_n\right\}\footnote{this is a Lie algebra with scalars in $\mathbb{R}$};
		\end{equation}
		\item Consider the multiples of the Pauli matrices
		\[
		\overline{\sigma}_x = \frac{i}{2}X = \frac{1}{2}\begin{pmatrix} 0 & i \\ i & 0 \end{pmatrix}, \quad
		\overline{\sigma}_y = -\frac{i}{2}Y = \frac{1}{2}\begin{pmatrix} 0 & -1 \\ 1 & 0 \end{pmatrix}, \quad
		\overline{\sigma}_z = \frac{i}{2}Z = \frac{1}{2}\begin{pmatrix} i & 0 \\ 0 & -i \end{pmatrix}.
		\]
		This allows rewriting
		\begin{equation}
			\mathscr{L}_0= \mbox{Lie algebra generated}\left\{-iH_0,\overline{\sigma}_{x,1},\overline{\sigma}_{y,1},\dots,\overline{\sigma}_{x,k},\overline{\sigma}_{y,k},\dots,\overline{\sigma}_{x,n},\overline{\sigma}_{y,n}\right\}.
		\end{equation}
		\item Define $\overline{\sigma}_0\defeq I$.
		\item Let $A$ be an arbitrary $2\times 2$ matrix with complex entries and $k\in \left\{1,\dots,n\right\}$. We define
		\[
		A_k \defeq I^{\otimes k-1} \otimes A \otimes I^{\otimes (127 - k)}=\underbrace{I \otimes \dots \otimes I}_{k-1\text{ times}}\otimes A \otimes \underbrace{I \otimes \dots \otimes I}_{127 - k\text{ times}}.
		\]
		\item Let $A$ and $B$ be two arbitrary $2\times 2$ matrices with complex entries, $k_1\in \left\{1,\dots,n\right\}$ and $k_2\in \left\{1,\dots,n\right\}$, with $k_1< k_2$. We define
		\[
		A_{k_1}B_{k_2} \defeq I^{\otimes k_1-1} \otimes A \otimes I^{\otimes (k_2 - k_1 - 1)} \otimes B \otimes I^{\otimes (127 - k_2)}=\underbrace{I \otimes \dots \otimes I}_{k_1-1\text{ times}}\otimes A \otimes \underbrace{I \otimes \dots \otimes I}_{k_2 - k_1 - 1\text{ times}} \otimes B \otimes \underbrace{I \otimes \dots \otimes I}_{127 - k_2\text{ times}}.
		\]
		\item Let $A$ and $B$ be two arbitrary $2\times 2$ matrices with complex entries, $k_1\in \left\{1,\dots,n\right\}$ and $k_2\in \left\{1,\dots,n\right\}$, with $k_1> k_2$. We define
		\[
		A_{k_1}B_{k_2} \defeq I^{\otimes k_1-1} \otimes B \otimes I^{\otimes (k_2 - k_1 - 1)} \otimes A \otimes I^{\otimes (127 - k_2)}=\underbrace{I \otimes \dots \otimes I}_{k_1-1\text{ times}}\otimes B \otimes \underbrace{I \otimes \dots \otimes I}_{k_2 - k_1 - 1\text{ times}} \otimes A \otimes \underbrace{I \otimes \dots \otimes I}_{127 - k_2\text{ times}}.
		\]
		\item Define the group $(\left\{0,x,y,z\right\},+)$, where $x+y\defeq z$, $y+z\defeq x$, $z+x\defeq y$ and $0$ is the neutral element\footnote{This group is indeed isomorphic to $(\mathbb{Z}_4,+)$}. All the operation on indices $q\in \left\{0,x,y,z\right\}$ are intended in this group.
	\end{itemize}
	
	The above definition of the operation $+$ in $\left\{0,x,y,z\right\}$ is motivated by the following remark.
	
	\begin{remark}
		The matrices $\overline{\sigma}_x$, $\overline{\sigma}_y$ and $\overline{\sigma}_z$ generate $su(2)$ (the space of $2\times2$ skew adjoint complex matrices, with zero trace) and they satisfy the commutation relations
		\begin{equation}\label{commutation_relation}
			[\overline{\sigma}_x,\overline{\sigma}_y]=\overline{\sigma}_z,\hspace{0.3 cm}[\overline{\sigma}_y,\overline{\sigma}_z]=\overline{\sigma}_x,\hspace{0.3 cm}[\overline{\sigma}_z,\overline{\sigma}_x]=\overline{\sigma}_y.
		\end{equation}
	\end{remark}
	
	\begin{proof}[Proof of Proposition \ref{prop_1}]
%		\begin{equation}
%			su(N)\supseteq \mathscr{L}_0
%		\end{equation}
%		ok.\\
		\textit{Step 0} \ \textbf{Determine a base for $\mathscr{L}_0$.} \\
		Consider $\mathcal{M}_{N\times N}(\mathbb{C})$, the vector space of $N\times N$ matrices with complex entries. This is a vector space both over $\mathbb{R}$ and $\mathbb{C}$. All along this proof, we are going to treat it as a vector space over $\mathbb{R}$.
		
		$su(N)$ is a vector subspace of $\mathcal{M}_{N\times N}(\mathbb{C})$, with dimension
		\begin{equation}\label{su_2n_dimension}
			\dim_{\mathbb{R}}(su(N))=4^n-1.
		\end{equation}
		
		Define the set of matrices
		% system
		\begin{equation}\label{B}
			\mathcal{B}\defeq \left\{i\sigma_{q_1}\otimes \dots\otimes \sigma_{q_j}\otimes \dots\otimes \sigma_{q_n} \ | \ q_j\in \left\{0,x,y,z\right\}, \ j\in \left\{1,\dots,n\right\}\right\}\setminus \left\{I\right\}.
		\end{equation}
		First of all, since the identity matrix $I$
		% is not in $\mathcal{B}$
		 has been removed, $\mathcal{B}$ is contained in $su(N)$. Secondly, it is independent, as a system of vectors of $\mathcal{M}_{N\times N}(\mathbb{C})$ over $\mathbb{R}$. Moreover, the number of element of $\mathcal{B}$ equals the dimension \eqref{su_2n_dimension}. This allows proving $\mathcal{B}$ is a base of $su(N)$ over $\mathbb{R}$. To conclude, it then suffices to prove that $\mathscr{L}_0$ contains $\mathcal{B}$ and to use \cite[Theorem 3.2.1 at page 89]{d2021introduction}.\\
		\textit{Step 1} \ \textbf{Controllability in each component of the state space.} \\
		For every $k \in \left\{1,\dots,n\right\}$, by \eqref{commutation_relation}, we have
		\begin{equation}
			[\overline{\sigma}_{x,k},\overline{\sigma}_{y,k}]=\overline{\sigma}_{z,k}.
		\end{equation}
	    Hence,
	    \begin{equation}
	    	\overline{\sigma}_{q,k}\in \mathscr{L}_0,\hspace{0.06 cm}\forall (q,k)\in \left\{x,y,z\right\}\times \left\{1,\dots,n\right\},
	    \end{equation}
        thus showing the controllability in each component.\\
		\textit{Step 2} \ \textbf{Coupling different components.} \\
		Cross-components controllability can be proved by repeatedly taking Lie brackets like
		\begin{equation}
			[i Z_c \otimes X_t,\overline{\sigma}_{q,k}],\hspace{0.3 cm}\mbox{for some $(c,t)\in \mathcal{E}$, $q\in \left\{x,y,z\right\}$ and $k\in\left\{1,\dots,n\right\}$},
		\end{equation}
		where we remind from subsection \ref{subchapter:control_hamiltonians} that $Z_c \otimes X_t$ represents one of the cross-resonance drives. In the computation of the above Lie brackets, commutation relations \eqref{commutation_relation} are employed.
		
	    Let us start by computing, for each $(k,l)\in \mathcal{E}$ and $q\in \left\{x,y\right\}$, the Lie bracket
	    \begin{equation}
	    	[i Z_k \otimes X_l,\overline{\sigma}_{q-z,k}] = i \sigma_{q,k}\sigma_{x,l},
	    \end{equation}
    	whence
    	\begin{equation}\label{statemnt_2}
    		i \sigma_{q,k}\sigma_{x,l}\in \mathscr{L}_0.
    	\end{equation}
    	At this point, taking the Lie bracket
    	\begin{equation}
    		[i \sigma_{q_1,k}\sigma_{x,l},i\sigma_{q_1,k}\sigma_{q_2-x,l}] = i\sigma_{q_1,k}\sigma_{q_2,l},
    	\end{equation}
    	for some $(k,l)\in \mathcal{E}$ and $(q_1,q_2)\in \left\{x,y,z\right\}\times \left\{y,z\right\}$, we show
    	\begin{equation}
    		i\sigma_{q_1,k}\sigma_{q_2,l}\in \mathscr{L}_0.
    	\end{equation}
    	
        We have proved that, for any edge $(k,l) \in \mathcal{E}$ and for every $(q_1,q_2)\in \left\{x,y,z\right\}^2$,
        \begin{equation}\label{statemnt_3}
        	i\sigma_{q_1,k}\sigma_{q_2,l}\in \mathscr{L}_0.
        \end{equation}
    	
    	Now, let us consider the case of $(k,l)\in \left\{1,\dots,n\right\}^2$, with $k<l$ and $(k,l)\notin \mathcal{E}$. At this point, we employ the assumption. Since $\mathcal{E}$ is connected, there exists, for some $P\in \mathbb{N}\setminus \left\{0\right\}$, a path of edges in $\mathcal{E}$ connecting $k$ and $l$\footnote{Checking whether there exists a relation between the discrete length $P$ of the path and the continuous time horizon $T$ (decoherence time) is an interesting research line.},
        \begin{equation}
        	\gamma:\left\{1,\dots,P\right\}\longrightarrow \left\{1,\dots,n\right\},
        \end{equation}
    	with
    	\begin{equation}
    		\begin{dcases}
    			\gamma(1) =k&\\
    			\dots \\
    			(\gamma(p-1),\gamma(p))\in \mathcal{E} &\forall p\in \left\{2,\dots,P\right\},\\
    			\dots \\
    			\gamma(P)=l&.\\
    		\end{dcases}
    	\end{equation}
    	Let us prove, by induction on $p\in \left\{2,\dots,P\right\}$, that
    	\begin{equation}\label{statemnt_4}
    		i\sigma_{q_1,k}\sigma_{q_2,\gamma(p)}\in \mathscr{L}_0,\hspace{0.3 cm}\forall (q_1,q_2)\in \left\{x,y,z\right\}^2.
    	\end{equation}
%    	\begin{equation}\label{statemnt_4}
%    		i\sigma_{q_1,k}\sigma_{q_2,\gamma(p)}\in \mathscr{L}_0,\hspace{0.3 cm}\forall p\in \left\{2,\dots,P\right\},\forall (q_1,q_2)\in \left\{x,y,z\right\}^2.
%    	\end{equation}
    	Since $(k, \gamma(2))\in \mathcal{E}$, by \eqref{statemnt_3}, the above assertion holds for $p=2$. Suppose now \eqref{statemnt_4} is verified for $p-1$ and let us prove it for $p$. By induction assumption, we have
    	\begin{equation}\label{statemnt_7}
    		i\sigma_{q_1,k}\sigma_{x,\gamma(p-1)}\in \mathscr{L}_0.
    	\end{equation}
    	Moreover, $(\gamma(p-1),\gamma(p))\in \mathcal{E}$. Hence, by \eqref{statemnt_3},
    	\begin{equation}\label{statemnt_9}
    		i\sigma_{y,\gamma(p-1)}\sigma_{q_2-2,\gamma(p)}\in \mathscr{L}_0.
    	\end{equation}
    
  		We take the Lie bracket of \eqref{statemnt_7} and \eqref{statemnt_9}
  		\begin{equation}
  			[i\sigma_{q_1,k}\sigma_{x,\gamma(p-1)},i\sigma_{y,\gamma(p-1)}\sigma_{q_2-2,\gamma(p)}]=i\sigma_{q_1,k}\sigma_{z,\gamma(p-1)}\sigma_{q_2-2,\gamma(p)},
  		\end{equation}
  		whence
  		\begin{equation}\label{statemnt_11}
  			i\sigma_{q_1,k}\sigma_{z,\gamma(p-1)}\sigma_{q_2-2,\gamma(p)} \in \mathscr{L}_0.
  		\end{equation}
  		Now, by \eqref{statemnt_3}
  		\begin{equation}\label{statemnt_19}
  			i\sigma_{z,\gamma(p-1)}\sigma_{q_2-1,\gamma(p)}\in \mathscr{L}_0.
  		\end{equation}
  	
  		We take the Lie bracket of \eqref{statemnt_11} and \eqref{statemnt_19}, getting
  		\begin{equation}\label{statemnt_21}
  			[i\sigma_{q_1,k}\sigma_{z,\gamma(p-1)}\sigma_{q_2-2,\gamma(p)},i\sigma_{z,\gamma(p-1)}\sigma_{q_2-1,\gamma(p)}]=i\sigma_{q_1,k}\sigma_{z,\gamma(p-1)}^2\sigma_{q_2,\gamma(p)}.
  		\end{equation}
  	
  		Now,
  		\begin{equation}\label{statemnt_24}
  			\sigma_{z}^2=Z^2=\begin{pmatrix} 1 & 0 \\ 0 & -1 \end{pmatrix}^2=\begin{pmatrix} 1 & 0 \\ 0 & 1 \end{pmatrix}.
  		\end{equation}
  		
  		Therefore, remembering \eqref{statemnt_21}, we obtain
  		\begin{equation}\label{statemnt_29}
  			i\sigma_{q_1,k}\sigma_{q_2,\gamma(p)}=[i\sigma_{q_1,k}\sigma_{z,\gamma(p-1)}\sigma_{q_2-2,\gamma(p)},i\sigma_{z,\gamma(p-1)}\sigma_{q_2-1,\gamma(p)}]\in \mathscr{L}_0,
  		\end{equation}
  		thus showing \eqref{statemnt_4}, which implies (for $p=P$)
  		\begin{equation}\label{statemnt_37}
  			i\sigma_{q_1,k}\sigma_{q_2,l}\in \mathscr{L}_0,\hspace{0.3 cm}\forall (q_1,q_2)\in \left\{x,y,z\right\}^2,
  		\end{equation}
  		as required.
  		
  		Let us now prove that
  		\begin{equation}\label{B_subsetL_0}
  			\mathcal{B}\subset \mathscr{L}_0,
  		\end{equation}
  		where the base $\mathcal{B}$ was defined in \eqref{B}. For every $(q_1,\dots,q_j,\dots,q_n)\in \left\{0,x,y,z\right\}^n$, consider
  		\begin{equation}
  			i\sigma_{q_1,1} \dots \sigma_{q_j,j} \dots \sigma_{q_n,n}
  		\end{equation}
  		and define the number of tensor components where there is a non identical action
  		\begin{equation}
  			n_{\mbox{\tiny{actions}}}\defeq \# \left\{j\in \left\{1,\dots,n\right\} \ | \ q_j\in \left\{x,y,z\right\}\right\}.
  		\end{equation}
  		We can rewrite $\mathcal{B}$ as
  		\begin{equation}
  			\mathcal{B}= \left\{\sigma_{q_1,1} \dots \sigma_{q_j,j} \dots \sigma_{q_n,n} \ | \ q_j\in \left\{0,x,y,z\right\}, n_{\mbox{\tiny{actions}}}\in \left\{1,\dots,n\right\}\right\}.
  		\end{equation}
  		Let us prove by induction over $n_{\mbox{\tiny{max}}}\in \left\{1,\dots,n\right\}$ that
  		\begin{equation}\label{proposition_to_be_proved_by_induction}
  			\left\{\sigma_{q_1,1} \dots \sigma_{q_j,j} \dots \sigma_{q_n,n} \ | \ q_j\in \left\{0,x,y,z\right\}, n_{\mbox{\tiny{actions}}}\in \left\{1,\dots,n_{\mbox{\tiny{max}}}\right\}\right\}\subset \mathscr{L}_0.
  		\end{equation}
  		From step 1, the above assertion follows for $n_{\mbox{\tiny{max}}}=1$ (base case). Let us show the inductive step. Assume the assertion for $n_{\mbox{\tiny{max}}}-1$ and prove it for $n_{\mbox{\tiny{max}}}$. By the induction assumption, \eqref{B_subsetL_0} holds for $n_{\mbox{\tiny{max}}}-1$, namely
  		\begin{equation}\label{induction_assumption}
  			\left\{\sigma_{q_1,1} \dots \sigma_{q_j,j} \dots \sigma_{q_n,n} \ | \ q_j\in \left\{0,x,y,z\right\}, n_{\mbox{\tiny{actions}}}\in \left\{1,\dots,n_{\mbox{\tiny{max}}}-1\right\}\right\}\subset \mathscr{L}_0.
  		\end{equation}
  		Now, take any $(q_1,\dots,q_j,\dots,q_n)\in \left\{0,x,y,z\right\}^n$, with
  		\begin{equation}
  			n_{\mbox{\tiny{actions}}}=n_{\mbox{\tiny{max}}}.
  		\end{equation}
  		On the one hand, by the induction assumption, we have
  		\begin{equation}
  			i\sigma_{q_1,1} \dots \sigma_{q_j,j} \dots \sigma_{q_{n_{\mbox{\tiny{max}}}-1}-2,n_{\mbox{\tiny{max}}}-1}\in \mathscr{L}_0.
  		\end{equation}
  		On the other hand, by \eqref{statemnt_37},
  		\begin{equation}
  			i\sigma_{q_{n_{\mbox{\tiny{max}}}-1}-1,n_{\mbox{\tiny{max}}}-1}\sigma_{q_{n_{\mbox{\tiny{max}}}},n_{\mbox{\tiny{max}}}}\in \mathscr{L}_0.
  		\end{equation}
  		Hence,
  		\begin{equation}
  			i\sigma_{q_1,1} \dots \sigma_{q_j,j} \dots \sigma_{q_{n_{\mbox{\tiny{max}}}-1},n_{\mbox{\tiny{max}}}-1}\sigma_{q_{n_{\mbox{\tiny{max}}}},n_{\mbox{\tiny{max}}}}
  		\end{equation}
  		\begin{equation}
  			=[i\sigma_{q_1,1} \dots \sigma_{q_j,j} \dots \sigma_{q_{n_{\mbox{\tiny{max}}}-1}-2,n_{\mbox{\tiny{max}}}-1},i\sigma_{q_{n_{\mbox{\tiny{max}}}-1}-1,n_{\mbox{\tiny{max}}}-1}\sigma_{q_{n_{\mbox{\tiny{max}}}},n_{\mbox{\tiny{max}}}}]\in \mathscr{L}_0,
  		\end{equation}
  		as required. Then, \eqref{proposition_to_be_proved_by_induction} is verified for $n_{\mbox{\tiny{max}}}$. By the induction principle, 
  		\begin{equation}\label{statemrnt_3678}
  			\mathcal{B}= \left\{\sigma_{q_1,1} \dots \sigma_{q_j,j} \dots \sigma_{q_n,n} \ | \ q_j\in \left\{0,x,y,z\right\}, n_{\mbox{\tiny{actions}}}\in \left\{1,\dots,n\right\}\right\}\subset \mathscr{L}_0.
  		\end{equation}
  		In step 0, we proved that $\mathcal{B}$ is a base for $su(N)$. Hence, 
  		\begin{equation}\label{statemrnt_367}
  			\mathscr{L}_0=su(N).
  		\end{equation}
  		Therefore, by \cite[Theorem 3.2.1 at page 89]{d2021introduction}, the {S}chr\"{o}dinger equation \eqref{ibm_brisbane_equation} is operator controllable. This finishes the proof.
%  		\begin{equation}
%  			su(N)\subseteq \mathscr{L}_0.
%  		\end{equation}
	\end{proof}
	
%	\subsection{Independence of Quantum Advantage (QA) from the Drift Hamiltonian $H_0$}\label{subchapter:Independence of Quantum Advantage from the Drift Hamiltonian $H_0$}
%	
%	[CHECK]
%	In definition \ref{quantum_advantage_definition}, we defined the concept of Quantum Advantage (QA). Let us show that, for quamtum systems like \eqref{ibm_brisbane_equation}, this concept is independent of the drift Hamiltonian $H_0$.
%	
%	Let $H_0^1$ and $H_0^2$ be two $N \times N$ self-adjoint matrices
%	% with complex entries
%	leading both to a periodic semigroup \eqref{semigroup}.
%	
%	First, we note the following.
%	
%	\begin{remark}\label{remark_->}
%		The trajectory $U^1(t)$ solves
%		\begin{equation}\label{}
%			\begin{cases}
%				i \frac{d}{dt} {U^1(t)} = \Bigg(
%				H^1_0
%				+ \sum_{k=1}^{127} \left( u_k^X(t) X_k + u_k^Y(t) Y_k \right)
%				+ \sum_{(c,t) \in \mathcal{E}} v_{ct}(t) Z_c \otimes X_t
%				\Bigg) {U^1(t)},& t\in (0,T)\\
%				U^1(0)=I,\\
%			\end{cases}
%		\end{equation}
%		if and only if
%		\begin{equation}\label{change_of_variable}
%			U^2(t)\defeq\exp(-iH^2_0t)\exp(iH^1_0t)U^1(t)
%		\end{equation}
%		No. This is wrong.
%		satisfies
%		\begin{equation}\label{}
%			\begin{cases}
%				i \frac{d}{dt} {U^2(t)} = \Bigg(
%				H^2_0
%				+ \sum_{k=1}^{127} \left( u_k^X(t) X_k + u_k^Y(t) Y_k \right)
%				+ \sum_{(c,t) \in \mathcal{E}} v_{ct}(t) Z_c \otimes X_t
%				\Bigg) {U^2(t)},& t\in (0,T)\\
%				U^2(0)=I.\\
%			\end{cases}
%		\end{equation}
%		
%	\end{remark}
%	
%	[CHECK]
	
	\section{Proof of the Quantum Advantage for the Quantum Fourier Transform}\label{sec:Proof of the Quantum Advantage for the Quantum Fourier Transform}

	In the introduction, we mentioned the Quantum Fourier Transform (QFT) as a central example of Quantum Advantage (QA). In this section, we rigorously verify that QA holds for the QFT within the control-theoretic framework of definition \ref{quantum_advantage_definition}. The argument is based on two ingredients:
	\begin{enumerate}
		\item a gate concatenation lemma, which exploits the left-invariance of the bilinear {S}chr\"{o}dinger equation;
		\item the well-known decomposition of the QFT into $O(n^2)$ elementary quantum gates (see, e.g. \cite[section 5.1]{nielsen2010quantum}).
	\end{enumerate}
	
	\subsection{Left-invariance and gate concatenation}\label{sec:gate_concatenation}
	
	The bilinear {S}chr\"{o}dinger equation \eqref{matrix_schrodinger} is \textit{left-invariant}: the dynamics depend on the controls but not on the current state $U(t)$ in a way that favors any particular group element. We exploit this property to bound the minimal time for a composite unitary in terms of the minimal times for its factors.
	
	\begin{lemma}[Gate concatenation]\label{lemma_gate_concatenation}
		Assume operator controllability holds for \eqref{ger equation}. Let $\Gamma_1, \Gamma_2 \in SU(N)$ be two target operators, and let $T_{\mbox{\tiny{min}}}(\Gamma_j)$ denote the minimal time defined in \eqref{def_min_time} for the target $\Gamma_j$ ($j=1,2$). Then, for the composed operator $\Gamma_2\Gamma_1$, we have
		\begin{equation}\label{gate_concatenation_bound}
			T_{\mbox{\tiny{min}}}(\Gamma_2\Gamma_1) \leq T_{\mbox{\tiny{min}}}(\Gamma_1) + T_{\mbox{\tiny{min}}}(\Gamma_2).
		\end{equation}
	\end{lemma}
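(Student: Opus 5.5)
The plan is to concatenate two optimal controls in time and use the right-multiplicative structure of the matrix Cauchy problem \eqref{matrix_schrodinger_inftime}. By Remark \ref{remark_min_time}, there exist controls $u^{(1)}, u^{(2)}\in \mathscr{U}_{\mbox{\tiny{ad}}}$, with values in $[-M,M]$, whose solutions $U^{(1)}, U^{(2)}$ of \eqref{matrix_schrodinger_inftime} satisfy $U^{(1)}(T_1)=\Gamma_1$ and $U^{(2)}(T_2)=\Gamma_2$. Here $T_j\defeq T_{\mbox{\tiny{min}}}(\Gamma_j)$.

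If one prefers not to rely on the attainment of the infimum, the same argument works with $T_j\le T_{\mbox{\tiny{min}}}(\Gamma_j)+\varepsilon$, letting $\varepsilon\to 0$ at the end. This is the safer route, and I would write it that way.

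Define the concatenated control
\begin{equation*}
u(t)\defeq \begin{cases} u^{(1)}(t), & t\in[0,T_1),\\ u^{(2)}(t-T_1), & t\in[T_1,T_1+T_2].\end{cases}
\end{equation*}
This still belongs to $L^\infty_M$ componentwise, so $u\in\mathscr{U}^{T_1+T_2}$. Let $U$ be the corresponding solution, which is unique by Remark \ref{remark_well_posedeness}. On $[0,T_1]$ uniqueness gives $U=U^{(1)}$, so $U(T_1)=\Gamma_1$.

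On $[T_1,T_1+T_2]$, set $V(s)\defeq U^{(2)}(s)\Gamma_1$ for $s\in[0,T_2]$. Since the generator acts by left multiplication, we have
\begin{equation*}
i\tfrac{d}{ds}V(s)=\big(H_0+\textstyle\sum_j u^{(2)}_j(s)H_j\big)V(s), \qquad V(0)=\Gamma_1 .
\end{equation*}
The shifted function $t\mapsto U(t)$ solves the same Cauchy problem after the change of variables $s=t-T_1$, starting from $\Gamma_1$. Uniqueness for the linear ODE with $L^\infty$ coefficients (the same Carathéodory theory as in Remark \ref{remark_well_posedeness}, now with initial datum $\Gamma_1$ rather than $I$) then gives $U(T_1+s)=U^{(2)}(s)\Gamma_1$. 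In particular, $U(T_1+T_2)=\Gamma_2\Gamma_1$.

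Hence $u\in\mathscr{U}^{T_1+T_2}_{\mbox{\tiny{ad}}}$ for the target $\Gamma_2\Gamma_1$, and the definition \eqref{def_min_time} yields \eqref{gate_concatenation_bound}.

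The only delicate point is justifying the uniqueness and translation step for a Cauchy problem with a non-identity initial datum at time $T_1$, since Remark \ref{remark_well_posedeness} is stated for $U(0)=I$. I would handle this directly: $W(t)\defeq U(t)\Gamma_1^{-1}$ solves the problem with identity data, so the uniqueness already quoted suffices.

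A further subtlety is that the definitions are phrased in $SU(N)$ while $U(t)$ lies a priori in $U(N)$. Since $H_0$ and the $H_j$ are traceless in the setting considered, $\det U(t)=1$ throughout, so no phase issue arises; if tracelessness were not assumed, one would note that the argument is insensitive to this anyway.
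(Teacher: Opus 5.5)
Your proof is correct and follows the paper's argument: you concatenate the two time-optimal controls, then use the right-multiplication invariance of the equation (the paper's Step 1, $W=V V_0^{-1}$, matches your $W=U\Gamma_1^{-1}$) together with uniqueness to obtain $U(T_1+T_2)=\Gamma_2\Gamma_1$. Your $\varepsilon$-variant uses controls reaching $\Gamma_j$ within $T_{\mbox{\tiny{min}}}(\Gamma_j)+\varepsilon$ instead of the attainment of the infimum, which Remark \ref{remark_min_time} only sketches, so it is a slightly more robust version of the same proof.
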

	\begin{proof}
		\textit{Step 1} \ \textbf{Left-invariance.}\\
		Let $V_0 \in SU(N)$ be arbitrary. Consider the {S}chr\"{o}dinger equation with initial condition $V_0$
		\begin{equation}\label{matrix_schrodinger_V0}
			\begin{cases}
				i\frac{d}{dt}V(t) = \big(H_0 + \sum_{j=1}^{m}u_j(t)H_j\big)V(t),& t\in (0,T)\\
				V(0)=V_0.\\
			\end{cases}
		\end{equation}
		Define $W(t) \defeq V(t)V_0^{-1}$. Then $W$ solves
		\begin{equation}
			\begin{cases}
				i\frac{d}{dt}W(t) = \big(H_0 + \sum_{j=1}^{m}u_j(t)H_j\big)W(t),& t\in (0,T)\\
				W(0)=I.\\
			\end{cases}
		\end{equation}
		Hence, $V(T) = \Gamma V_0$ if and only if $W(T) = \Gamma$. This shows that the minimal time to steer from $V_0$ to $\Gamma V_0$ equals the minimal time to steer from $I$ to $\Gamma$, that is, $T_{\mbox{\tiny{min}}}(\Gamma)$.\\
		\textit{Step 2} \ \textbf{Concatenation.}\\
		By remark \ref{remark_min_time}, there exist controls
		\begin{eqnarray}
			u^{(1)} &\in& \mathscr{U}^{T_{\mbox{\tiny{min}}}(\Gamma_1)}_{\mbox{\tiny{ad}}},\nonumber\\
			u^{(2)} &\in& \mathscr{U}^{T_{\mbox{\tiny{min}}}(\Gamma_2)}_{\mbox{\tiny{ad}}},\nonumber
		\end{eqnarray}
		such that
		\begin{itemize}
			\item the solution $U^{(1)}$ to \eqref{matrix_schrodinger} with control $u^{(1)}$ satisfies $U^{(1)}(T_{\mbox{\tiny{min}}}(\Gamma_1)) = \Gamma_1$;
			\item the solution $U^{(2)}$ to \eqref{matrix_schrodinger} with control $u^{(2)}$ satisfies $U^{(2)}(T_{\mbox{\tiny{min}}}(\Gamma_2)) = \Gamma_2$.
		\end{itemize}
		Define the concatenated control $\hat{u}:[0,T_{\mbox{\tiny{min}}}(\Gamma_1)+T_{\mbox{\tiny{min}}}(\Gamma_2)]\longrightarrow \mathbb{R}^m$ by
		\begin{equation}
			\hat{u}(t) \coloneqq \begin{dcases}
				u^{(1)}(t), & t \in [0, T_{\mbox{\tiny{min}}}(\Gamma_1)),\\
				u^{(2)}(t - T_{\mbox{\tiny{min}}}(\Gamma_1)), & t \in [T_{\mbox{\tiny{min}}}(\Gamma_1), T_{\mbox{\tiny{min}}}(\Gamma_1)+T_{\mbox{\tiny{min}}}(\Gamma_2)].
			\end{dcases}
		\end{equation}
		Let $\widehat{U}$ be the solution to \eqref{matrix_schrodinger} with control $\hat{u}$. Then, by uniqueness of solutions to the Cauchy problem
		\begin{equation}
			\widehat{U}(T_{\mbox{\tiny{min}}}(\Gamma_1)) = \Gamma_1.
		\end{equation}
		For $t \in [T_{\mbox{\tiny{min}}}(\Gamma_1), T_{\mbox{\tiny{min}}}(\Gamma_1)+T_{\mbox{\tiny{min}}}(\Gamma_2)]$, by step 1 (left-invariance applied with $V_0 = \Gamma_1$), the trajectory $\widehat{U}$ steers from $\Gamma_1$ to $\Gamma_2\Gamma_1$ in time $T_{\mbox{\tiny{min}}}(\Gamma_2)$.
		
		Therefore
		\begin{equation}
			\widehat{U}(T_{\mbox{\tiny{min}}}(\Gamma_1)+T_{\mbox{\tiny{min}}}(\Gamma_2)) = \Gamma_2\Gamma_1,
		\end{equation}
		proving \eqref{gate_concatenation_bound}.
	\end{proof}
	
	By induction on lemma \ref{lemma_gate_concatenation}, we immediately obtain the following.
	
	\begin{corollary}[Multi-gate concatenation]\label{corollary_multi_gate_concatenation}
		Assume operator controllability holds for \eqref{ger equation}. Let $L \in \mathbb{N}\setminus\{0\}$ and $\Gamma_1, \dots, \Gamma_L \in SU(N)$. Then
		\begin{equation}\label{multi_gate_concatenation_bound}
			T_{\mbox{\tiny{min}}}(\Gamma_L \cdots \Gamma_2 \Gamma_1) \leq \sum_{l=1}^{L} T_{\mbox{\tiny{min}}}(\Gamma_l).
		\end{equation}
	\end{corollary}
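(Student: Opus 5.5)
We argue by induction on $L$, using Lemma \ref{lemma_gate_concatenation} as the inductive engine. For $L=1$ the bound \eqref{multi_gate_concatenation_bound} reduces to $T_{\mbox{\tiny{min}}}(\Gamma_1)\leq T_{\mbox{\tiny{min}}}(\Gamma_1)$, which is trivial. The quantity $T_{\mbox{\tiny{min}}}(\Gamma)$ is well defined and finite for every $\Gamma\in SU(N)$ by the operator controllability assumption, as discussed after \eqref{def_min_time}. By Remark \ref{remark_min_time} the infimum is moreover attained, which is what Lemma \ref{lemma_gate_concatenation} needs.

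For the inductive step, assume \eqref{multi_gate_concatenation_bound} holds for some $L\geq 1$ and every choice of $L$ targets in $SU(N)$. Given $\Gamma_1,\dots,\Gamma_{L+1}\in SU(N)$, set $\Lambda\defeq \Gamma_L\cdots\Gamma_1$. This is a product of special unitary matrices, so $\Lambda\in SU(N)$ and Lemma \ref{lemma_gate_concatenation} applies to the pair $(\Lambda,\Gamma_{L+1})$. It gives
\begin{equation*}
T_{\mbox{\tiny{min}}}(\Gamma_{L+1}\Lambda)\leq T_{\mbox{\tiny{min}}}(\Lambda)+T_{\mbox{\tiny{min}}}(\Gamma_{L+1}).
\end{equation*}
Bounding $T_{\mbox{\tiny{min}}}(\Lambda)$ by the inductive hypothesis yields $T_{\mbox{\tiny{min}}}(\Gamma_{L+1}\cdots\Gamma_1)\leq\sum_{l=1}^{L+1}T_{\mbox{\tiny{min}}}(\Gamma_l)$, which closes the induction.

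The only point requiring care is that the intermediate product $\Lambda$ stays in the class of admissible targets, so that the lemma can be reapplied. This holds because $SU(N)$ is a group. There is no real obstacle beyond this.

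An equivalent direct route is to concatenate the $L$ minimal-time controls explicitly, as in Step 2 of the proof of Lemma \ref{lemma_gate_concatenation}. One then uses left-invariance (Step 1 of that proof) on each subinterval, with initial datum $\Gamma_{l-1}\cdots\Gamma_1$, to check that the concatenated trajectory ends at $\Gamma_L\cdots\Gamma_1$ at time $\sum_l T_{\mbox{\tiny{min}}}(\Gamma_l)$. The concatenated control still satisfies the bound $|u(t)|\leq M$, so it lies in $\mathscr{U}^T$.
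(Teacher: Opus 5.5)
Your proposal is correct and follows the paper's approach: the paper obtains the corollary "by induction on Lemma~\ref{lemma_gate_concatenation}". Your inductive step, applying the lemma to the pair $(\Lambda,\Gamma_{L+1})$ with $\Lambda=\Gamma_L\cdots\Gamma_1\in SU(N)$, is exactly the argument the paper leaves implicit.
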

	
	\subsection{The Quantum Fourier Transform}\label{sec:QFT_definition}
	
	The Quantum Fourier Transform (QFT) on $n$ qubits is the unitary operator $\Gamma_{\mbox{\tiny{QFT}}} \in SU(2^n)$ defined by its action on the computational basis vectors $|k\rangle$, $k \in \{0, 1, \dots, 2^n - 1\}$, as
	\begin{equation}\label{QFT_definition}
		\Gamma_{\mbox{\tiny{QFT}}} |k\rangle = \frac{1}{\sqrt{2^n}} \sum_{l=0}^{2^n - 1} e^{2\pi i k l / 2^n} |l\rangle.
	\end{equation}
	Equivalently, in matrix form, $\Gamma_{\mbox{\tiny{QFT}}}$ is the $2^n \times 2^n$ unitary matrix with entries
	\begin{equation}
		\left(\Gamma_{\mbox{\tiny{QFT}}}\right)_{l,k} = \frac{1}{\sqrt{2^n}} e^{2\pi i k l / 2^n},\hspace{0.3cm} k,l \in \{0, 1, \dots, 2^n - 1\}.
	\end{equation}
	It is well-known (see \cite[section 5.1]{nielsen2010quantum}) that the QFT can be decomposed into a product of elementary quantum gates as follows.
	
	\begin{proposition}[QFT decomposition, {\cite[section 5.1]{nielsen2010quantum}}]\label{prop_QFT_decomposition}
		The QFT on $n$ qubits can be decomposed as
		\begin{equation}\label{QFT_decomposition}
			\Gamma_{\mbox{\tiny{QFT}}} = S_n \cdot \prod_{k=1}^{n} \left( H_k \cdot \prod_{\substack{j=k+1}}^{n} R_{k,j} \right),
		\end{equation}
		where
		\begin{itemize}
			\item $H_k$ denotes the Hadamard gate acting on qubit $k$
			\begin{equation}
				H_k = \frac{1}{\sqrt{2}}(X_k + Z_k);
			\end{equation}
			\item $R_{k,j}$ denotes the controlled-$R_{j-k+1}$ gate, with qubit $j$ as control and qubit $k$ as target, defined by
			\begin{equation}
				R_{k,j} = |0\rangle\langle 0|_j \otimes I_k + |1\rangle\langle 1|_j \otimes \begin{pmatrix} 1 & 0 \\ 0 & e^{2\pi i / 2^{j-k+1}} \end{pmatrix}_k;
			\end{equation}
			\item $S_n$ is the swap network that reverses the order of the qubits.
		\end{itemize}
		The total number of elementary gates in the decomposition \eqref{QFT_decomposition} is
		\begin{equation}\label{QFT_gate_count}
			L(n) = n + \frac{n(n-1)}{2} + \left\lfloor \frac{n}{2} \right\rfloor = \frac{n(n+1)}{2} + \left\lfloor \frac{n}{2} \right\rfloor = O(n^2).
		\end{equation}
	\end{proposition}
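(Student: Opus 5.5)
The natural route is to verify the factorization \eqref{QFT_decomposition} on the computational basis, using the product representation of the QFT, and then count the gates. I will first write $k=\sum_{m=1}^{n} k_m 2^{n-m}$ in binary, $k=k_1k_2\dots k_n$ with $k_1$ the most significant bit stored on qubit $1$. Expanding $l=\sum_{m} l_m 2^{n-m}$ in \eqref{QFT_definition} and using $e^{2\pi i k l/2^n}=\prod_{m=1}^n e^{2\pi i k l_m 2^{-m}}$, where integer parts of $k2^{-m}$ drop out of the exponential, the sum over $l$ factorizes. This gives
\begin{equation*}
\Gamma_{\mbox{\tiny{QFT}}}|k_1\dots k_n\rangle=\frac{1}{\sqrt{2^n}}\bigotimes_{m=1}^{n}\left(|0\rangle+e^{2\pi i\,0.k_{n-m+1}\dots k_n}|1\rangle\right),
\end{equation*}
with the binary fraction notation $0.k_a\dots k_n=\sum_{r=a}^{n}k_r2^{a-1-r}$. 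This is the standard computation of \cite[section 5.1]{nielsen2010quantum}.

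Next I will compute the action of the circuit on $|k_1\dots k_n\rangle$, applying the factors with $k=1$ acting first. The Hadamard $H_1$ maps qubit $1$ to $\frac{1}{\sqrt2}(|0\rangle+e^{2\pi i\,0.k_1}|1\rangle)$. Each $R_{1,j}$, $j=2,\dots,n$, is diagonal in the computational basis. It multiplies the $|1\rangle$ component of qubit $1$ by $e^{2\pi i k_j/2^{j}}$ and leaves qubit $j$ unchanged, since qubit $j$ acts only as a control. After the block $k=1$, qubit $1$ therefore carries the phase $e^{2\pi i\,0.k_1k_2\dots k_n}$.

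I then argue by induction on $k$. When block $k$ starts, qubits $k,\dots,n$ still hold the basis values $k_k,\dots,k_n$, because earlier blocks only used them as controls. So block $k$ puts qubit $k$ into $\frac{1}{\sqrt2}(|0\rangle+e^{2\pi i\,0.k_k\dots k_n}|1\rangle)$ and leaves qubits $1,\dots,k-1$ unchanged. After all $n$ blocks the state is the product above with the factors in reversed order. The swap network $S_n$ reverses the qubits and so yields exactly $\Gamma_{\mbox{\tiny{QFT}}}|k\rangle$. Since both sides are linear and agree on a basis, \eqref{QFT_decomposition} follows.

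The step needing the most care is the ordering and index conventions. The product symbols in \eqref{QFT_decomposition} must be read so that within block $k$ the gate $H_k$ acts first, and block $1$ acts first overall. The phase exponent $2^{-(j-k+1)}$ must also match the binary digit position. I will fix these conventions explicitly at the start.

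The gate count is immediate. There are $n$ Hadamards and $\sum_{k=1}^n(n-k)=n(n-1)/2$ controlled rotations. The swap network uses $\lfloor n/2\rfloor$ swaps, namely qubit $m$ exchanged with qubit $n+1-m$ for $m\le n/2$. The total is \eqref{QFT_gate_count}, which is $O(n^2)$.

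A final caveat concerns phases. The gates listed are unitary but not all in $SU(N)$: for instance $\det H_k=(-1)^{2^{n-1}}$, and $R_{k,j}$ has a nontrivial determinant. If the factors are needed in $SU(N)$, as in the concatenation lemma, I will rescale each gate by a global phase. This is harmless physically and does not change the gate count.
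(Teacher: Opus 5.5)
Your proof is correct. It is precisely the product-representation argument of Nielsen--Chuang, section~5.1, which the paper cites instead of giving its own proof, and your count reproduces \eqref{QFT_gate_count}.

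Fixing the conventions was genuinely necessary. With qubit $1$ as the most significant bit, the display read literally as a matrix product already fails for $n=2$. It does hold verbatim if qubit $1$ is the least significant bit: transpose your identity using $\Gamma_{\mbox{\tiny{QFT}}}^{\top}=\Gamma_{\mbox{\tiny{QFT}}}$, all factors being symmetric, then conjugate by $S_n$. In your last paragraph, note that $\det H_k=(-1)^{2^{n-1}}=1$ for $n\ge 2$, so the phase rescaling is needed for gates such as $R_{1,n}$ (determinant $i$) rather than for $H_k$.
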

	
	\subsection{Quantum Advantage for the QFT}\label{sec:QA_QFT_proof}
	
	We now combine the gate concatenation result (corollary \ref{corollary_multi_gate_concatenation}) with the QFT decomposition (proposition \ref{prop_QFT_decomposition}) to establish Quantum Advantage.
	
	We require the following assumption on the physical hardware.
	
	\begin{definition}\label{def_elementary_gate_time}
		We say that the controlled {S}chr\"{o}dinger equation \eqref{ger equation} has \textit{uniformly bounded elementary gate time} if there exists a constant $\tau > 0$ (independent of $n$), such that for every elementary gate $G$ acting nontrivially on at most $2$ qubits,
		\begin{equation}\label{elementary_gate_time_bound}
			T_{\mbox{\tiny{min}}}(G) \leq \tau.
		\end{equation}
	\end{definition}
	
	\begin{remark}\label{remark_elementary_gate_time}
		The assumption of uniformly bounded elementary gate time is physically natural. Consider a single-qubit gate $G$ on qubit $k$, e.g. a Hadamard $H_k$. In the framework of subsection \ref{subchapter:control_hamiltonians}, using the controls $u_k^X(t)$ and $u_k^Y(t)$ alone (setting all other controls to zero), the minimal time to implement $G$ depends on the control bound $M$ and on the coupling constants of the drift Hamiltonian $H_0$, but \emph{not} on the total number of qubits $n$. Indeed, the dynamics on qubit $k$ decouples from the rest (in the interaction picture) up to phases induced by $H_0$, which are compensable. Similarly, for a two-qubit gate between qubits $k$ and $l$, the cross-resonance interaction $Z_k \otimes X_l$ provides direct controllability of the two-qubit subsystem in bounded time.
		
		Making this argument fully rigorous requires a careful analysis in the rotating frame; we leave this as an interesting line of research (see also open problem 1 in section \ref{chapter:Open problems}).
	\end{remark}
	
	\begin{theorem}[Quantum Advantage for the QFT]\label{theorem_QA_QFT}
		Assume that the controlled {S}chr\"{o}dinger equation \eqref{ger equation} satisfies
		\begin{enumerate}
			\item[a)] operator controllability (as in Proposition \ref{prop_1});
			\item[b)] uniformly bounded elementary gate time (definition \ref{def_elementary_gate_time}).
		\end{enumerate}
		Then, the QFT exhibits Quantum Advantage (QA) in the sense of definition \ref{quantum_advantage_definition}. Specifically,
		\begin{equation}\label{QFT_minimal_time_bound}
			T_{\mbox{\tiny{min}}}(\Gamma_{\mbox{\tiny{QFT}}}) \leq \tau \left(\frac{n(n+1)}{2} + \left\lfloor \frac{n}{2} \right\rfloor\right) \leq \tau n^2,
		\end{equation}
		where $\tau$ is the constant from \eqref{elementary_gate_time_bound}.
	\end{theorem}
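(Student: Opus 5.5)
The proof combines Corollary \ref{corollary_multi_gate_concatenation} with the circuit decomposition of Proposition \ref{prop_QFT_decomposition}, using assumption b) to bound each factor. By hypothesis a), $\mathscr{U}_{\mbox{\tiny{ad}}}$ is nonempty for every target in $SU(N)$, so item a) of definition \ref{quantum_advantage_definition} holds. Moreover $T_{\mbox{\tiny{min}}}$ is well defined and, by remark \ref{remark_min_time}, attained for each factor, which is what the corollary needs.

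\textbf{Step 1: write the QFT as a product of elementary gates.} Expanding \eqref{QFT_decomposition} gives the $L(n)$ factors in the order in which they act. These are $n$ Hadamard gates $H_k$, the $\frac{n(n-1)}{2}$ controlled rotations $R_{k,j}$, and the swap network $S_n$. I write $S_n$ as $\lfloor n/2\rfloor$ two-qubit swaps, exchanging qubit $k$ with qubit $n+1-k$ for $k\le \lfloor n/2\rfloor$. Each factor acts nontrivially on at most $2$ qubits, so each is an elementary gate in the sense of definition \ref{def_elementary_gate_time}.

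\textbf{Step 2: fix global phases so that every factor lies in $SU(N)$.} This is the step I expect to be the main (if mild) obstacle, since lemma \ref{lemma_gate_concatenation} and its corollary are stated for $SU(N)$. On the full register:
\begin{itemize}
\item $\det H_k=(-1)^{N/2}$;
\item $\det R_{k,j}=e^{2\pi i N/2^{j-k+2}}$;
\item $\det(\mathrm{SWAP})=(-1)^{N/4}$.
\end{itemize}
These are not all equal to $1$. I would therefore replace each factor $G_l$ by $\widetilde G_l=e^{i\theta_l}G_l\in SU(N)$, for a suitable $\theta_l$ with $e^{iN\theta_l}\det G_l=1$. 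This is harmless because, by definition \ref{quantum_advantage_definition_remark}'s underlying convention (definition \ref{quantum_state_definition}), quantum states are rays, so global phases are physically irrelevant. I also read definition \ref{def_elementary_gate_time} as applying to these $SU(N)$ representatives; since $\widetilde G_l$ acts on the same (at most two) qubits up to a scalar, this is the natural reading. Then
\begin{equation*}
\widetilde G_{L(n)}\cdots\widetilde G_1=e^{i\phi}\,\Gamma_{\mbox{\tiny{QFT}}},
\end{equation*}
and this matrix lies in $SU(N)$. It is the $SU(N)$ representative of the QFT used as the target in \eqref{def_min_time}, and it implements the same transformation of quantum states.

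\textbf{Step 3: concatenate and count.} Applying corollary \ref{corollary_multi_gate_concatenation} with $\Gamma_l=\widetilde G_l$, and then bound \eqref{elementary_gate_time_bound} for each factor, gives
\begin{equation*}
T_{\mbox{\tiny{min}}}(\Gamma_{\mbox{\tiny{QFT}}})\le\sum_{l=1}^{L(n)}T_{\mbox{\tiny{min}}}(\widetilde G_l)\le \tau L(n)=\tau\Big(\tfrac{n(n+1)}{2}+\big\lfloor\tfrac n2\big\rfloor\Big).
\end{equation*}
This is the first inequality in \eqref{QFT_minimal_time_bound}. For the second inequality I use $\lfloor n/2\rfloor\le n/2$, so that
\begin{equation*}
\tfrac{n(n+1)}{2}+\tfrac n2=\tfrac{n^2+2n}{2}\le n^2 \quad\text{for } n\ge 2,
\end{equation*}
and I check $n=1$ directly: the sum equals $1+0=1\le 1$. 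Finally, \eqref{label_t_min_estimate} holds with $C=\tau$ and $p=2$, and neither depends on $n$ by definition \ref{def_elementary_gate_time}. Hence item b) of definition \ref{quantum_advantage_definition} is satisfied, and the QFT exhibits Quantum Advantage.
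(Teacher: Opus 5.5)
Your proposal is correct and follows the paper's proof. You decompose $\Gamma_{\mbox{\tiny{QFT}}}$ via Proposition~\ref{prop_QFT_decomposition} into $L(n)$ gates on at most two qubits, apply Corollary~\ref{corollary_multi_gate_concatenation}, bound each factor by $\tau$, and use $L(n)\le n^2$. Your Step~2 phase normalization and your explicit check of $L(n)\le n^2$ are refinements the paper omits: it simply declares $\Gamma_{\mbox{\tiny{QFT}}}\in SU(2^n)$, although in fact $\det\Gamma_{\mbox{\tiny{QFT}}}\neq 1$ while traceless Hamiltonians only reach $SU(N)$, so the extra care is warranted. Your only slip is minor and does not affect the argument: since the phase sits on $N/4$ basis vectors, $\det R_{k,j}=e^{2\pi i N/2^{j-k+3}}$ rather than $e^{2\pi i N/2^{j-k+2}}$.
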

	\begin{proof}
		By proposition \ref{prop_QFT_decomposition}, the QFT decomposes as a product of $L(n) = O(n^2)$ elementary gates, each acting on at most $2$ qubits. By corollary \ref{corollary_multi_gate_concatenation} and the assumption \eqref{elementary_gate_time_bound}, we have
		\begin{equation}
			T_{\mbox{\tiny{min}}}(\Gamma_{\mbox{\tiny{QFT}}}) \leq \sum_{l=1}^{L(n)} T_{\mbox{\tiny{min}}}(G_l) \leq L(n) \cdot \tau \leq \tau n^2,
		\end{equation}
		where the last inequality uses $L(n) \leq n^2$ for all $n \geq 1$ (from \eqref{QFT_gate_count}). This establishes \eqref{label_t_min_estimate} with $C = \tau$ and $p = 2$.
	\end{proof}
	
	\begin{remark}[Comparison with classical complexity]
		The classical Discrete Fourier Transform (DFT) on $N = 2^n$ points requires $O(N \log N) = O(n \cdot 2^n)$ operations via the Fast Fourier Transform (FFT). The quantum implementation achieves $O(n^2)$ gate operations, yielding an exponential speedup. This is the essence of the quantum advantage for the QFT: the physical evolution time scales polynomially in the number of qubits, whereas the best known classical algorithms for the same transformation require time exponential in $n$.
	\end{remark}
	
	\begin{remark}[Application to Shor's algorithm]
		Shor's algorithm for integer factorization \cite{shor1999polynomial} is primarily based on the QFT (specifically, quantum phase estimation). Indeed, Shor's is based on two main ideas
		\begin{itemize}
			\item[a)] reduce the factoring problem to the problem of finding the period of a function;
			\item[b)] compute the period by QFT.
		\end{itemize}
		Theorem \ref{theorem_QA_QFT} therefore provides a control-theoretic explanation of why Shor's algorithm achieves exponential speedup over classical factoring algorithms: the underlying QFT can be physically implemented in polynomial time $O(n^2)$.
	\end{remark}
	
	Combining theorem \ref{theorem_QA_QFT} with the necessary condition for QA (proposition \ref{prop_7}), we obtain the following.
	
	\begin{corollary}[Necessary condition for the QFT]\label{corollary_necessary_condition_QFT}
		Under the assumptions of theorem \ref{theorem_QA_QFT}, the value function \eqref{label_value_function} satisfies
		\begin{equation}\label{QA_QFT_turnpike}
			V(T,\Gamma_{\mbox{\tiny{QFT}}}) \leq 2\left\|H_0\right\|^2 \tau n^2,\hspace{0.3cm}\forall T>0.
		\end{equation}
	\end{corollary}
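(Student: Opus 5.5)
The plan is to obtain \eqref{QA_QFT_turnpike} by substitution. I would take the necessary condition of proposition \ref{prop_7} as a black box, apply it to $\Gamma = \Gamma_{\mbox{\tiny{QFT}}}$, and then insert the minimal-time estimate \eqref{QFT_minimal_time_bound} of theorem \ref{theorem_QA_QFT}. I cannot see the statement of proposition \ref{prop_7}, so I am assuming its shape. My reading is that, under operator controllability, it bounds the value function \eqref{label_value_function} of the surrogate problem on the commutator $[U(t)\Gamma^*, H_0]$ by the minimal time, in the form
\begin{equation*}
V(T,\Gamma) \leq 2\left\|H_0\right\|^2 T_{\mbox{\tiny{min}}}(\Gamma), \qquad \forall T>0 .
\end{equation*}
If this reading is wrong, the argument below would need to be reshaped around whatever proposition \ref{prop_7} actually states.

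\textbf{Step 1: check the hypotheses of proposition \ref{prop_7}.} Assumption a) of theorem \ref{theorem_QA_QFT} gives operator controllability, so $\mathscr{U}_{\mbox{\tiny{ad}}}\neq\emptyset$ for every target. Remark \ref{remark_min_time} then guarantees that $T_{\mbox{\tiny{min}}}(\Gamma_{\mbox{\tiny{QFT}}})$ is finite and attained. This is the input proposition \ref{prop_7} needs: it compares the value function with the cost of the time-optimal control $u_{\mbox{\tiny{min}}}$, used as a competitor.

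\textbf{Step 2: understand where the constant and the uniformity in $T$ come from.} The constant should come from the pointwise estimate
\begin{equation*}
\left\|[U(t)\Gamma^*,H_0]\right\| \leq 2\left\|U(t)\Gamma^*\right\|\left\|H_0\right\| = 2\left\|H_0\right\|,
\end{equation*}
which holds because $U(t)\Gamma^*$ is unitary. Its square, or a normalized version of it, integrated over a window of length at most $T_{\mbox{\tiny{min}}}(\Gamma)$, should give the factor $2\|H_0\|^2$; the exact constant depends on how \eqref{label_value_function} is normalized. The bound must hold uniformly in $T$, and I would first look for how proposition \ref{prop_7} achieves this. I expect that for $T\geq T_{\mbox{\tiny{min}}}$ the competitor reaches $\Gamma$ at time $T_{\mbox{\tiny{min}}}$. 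From then on the commutator is evaluated at $U\Gamma^*=I$, where it vanishes, or the running cost is otherwise truncated. For $T<T_{\mbox{\tiny{min}}}$ the cost is trivially bounded by $2\|H_0\|^2 T \le 2\|H_0\|^2 T_{\mbox{\tiny{min}}}$.

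\textbf{Step 3: conclude.} Combining the two estimates gives
\begin{equation*}
V(T,\Gamma_{\mbox{\tiny{QFT}}}) \leq 2\left\|H_0\right\|^2 T_{\mbox{\tiny{min}}}(\Gamma_{\mbox{\tiny{QFT}}}) \leq 2\left\|H_0\right\|^2 \tau n^2 ,
\end{equation*}
using \eqref{QFT_minimal_time_bound} in the last inequality. Since neither bound depends on $T$, the inequality holds for all $T>0$.

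\textbf{Main obstacle.} The algebra is trivial. The real risk is that the hypotheses and normalization of proposition \ref{prop_7} match exactly, in particular that its constant is $2\|H_0\|^2$ and that its bound is uniform in $T$ rather than restricted to $T\geq T_{\mbox{\tiny{min}}}$. If proposition \ref{prop_7} is only stated for $T\ge T_{\mbox{\tiny{min}}}$, I would handle $T<T_{\mbox{\tiny{min}}}$ separately using the crude pointwise bound from Step 2.
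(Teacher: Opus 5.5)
Your proposal is correct and is essentially the paper's argument: the corollary follows by applying proposition \ref{prop_7} to $\Gamma=\Gamma_{\mbox{\tiny{QFT}}}$, with $C=\tau$ and $p=2$ supplied by theorem \ref{theorem_QA_QFT}, and the proof of proposition \ref{prop_7} establishes exactly your reading $V(T,\Gamma)\le 2\left\|H_0\right\|^2 T_{\mbox{\tiny{min}}}(\Gamma)$ for every $T>0$. One small correction to your Step 2 speculation: after $T_{\mbox{\tiny{min}}}$ the competitor switches the controls off, so $\widehat{U}(t)\Gamma^*=\exp(-iH_0(t-T_{\mbox{\tiny{min}}}))$ rather than $I$; the commutator vanishes because this operator commutes with $H_0$.
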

	\begin{proof}
		Direct consequence of theorem \ref{theorem_QA_QFT} and proposition \ref{prop_7}, with $C = \tau$ and $p = 2$.
	\end{proof}
	
	The above corollary provides a \textit{checkable} necessary condition: if the value function $V(T,\Gamma_{\mbox{\tiny{QFT}}})$ grows faster than $O(n^2)$, then the assumption of uniformly bounded elementary gate time (definition \ref{def_elementary_gate_time}) cannot hold for the given hardware.
	
	\chapter{Quantum Discrete Optimization: the Maximum Independent Set (MIS) problem}\label{chapter:Quantum Discrete Optimization}
	
	\section{Introduction}
	\label{sec:1}
	
	The paradigm of Quantum Computing (QC) \cite{nielsen2010quantum} is currently navigating the Noisy Intermediate-Scale Quantum (NISQ) era, an epoch characterized by hardware that possesses sufficient qubit counts to challenge classical supercomputers but lacks the fault-tolerant error correction required for deep, universal gate-based algorithms. Within this landscape, combinatorial optimization has emerged as a primary candidate for demonstrating practical quantum advantage. Among the most prominent algorithmic strategies is the Quantum Approximate Optimization Algorithm (QAOA) \cite{farhi2014quantum,farhi2016quantum}, initially proposed as a digital, variational protocol designed to explore the Hilbert space of near-term processors to isolate the ground states of objective Hamiltonians.
	
	In this chapter, rather than addressing general discrete optimization, we restrict ourselves to a single, paradigmatic problem: the \textbf{Maximum Independent Set (MIS)} problem on an undirected simple graph. The choice of MIS is not accidental:
	\begin{itemize}
		\item MIS is NP-hard \cite{karp1972reducibility} and admits a clean, minimal binary-variable formulation, requiring only one binary variable per vertex of the underlying graph;
		\item MIS is \emph{naturally encoded} on neutral-atom quantum processors via the Rydberg blockade mechanism (chapter \ref{chapter:Neutral atoms quantum computers} and \cite{Henriet2020quantumcomputing,pichler2018quantum,ebadi2022quantum}), without any need to introduce artificial penalty terms;
		\item MIS, being a constrained combinatorial problem with a graph-theoretic structure, is particularly well suited to a control-theoretic analysis of the QAOA dynamics.
	\end{itemize}
	All along this chapter, we work \emph{exclusively} with binary variables $x_i \in \{0, 1\}$ (one per vertex), without recurring to the spin-variable change $z_i = 1 - 2 x_i \in \{-1, +1\}$. Accordingly, the cost Hamiltonian is built directly from the qubit projector $\hat n_i = |1\rangle\langle 1|_i = (I - Z_i)/2$, which is the natural quantum lift of the binary variable $x_i$.
	
	Historically, QAOA was conceived as a heuristic sequence of discrete unitary operations, alternating between a problem-specific Hamiltonian and a beginning Hamiltonian. However, as the physical limitations of discrete gate synthesis, such as accumulated Trotterization errors and gate fidelities, become increasingly apparent, the theoretical understanding of QAOA is undergoing a fundamental transformation. By casting the optimization problem into the continuous-time domain, QAOA can be rigorously reinterpreted through the mathematical lens of Quantum Optimal Control theory. This perspective bridges the gap between pure Adiabatic Quantum Optimization (AQO) and digital variational algorithms, revealing that QAOA is fundamentally a discretized approximation of a continuous ``bang-anneal-bang'' optimal control protocol \cite{PhysRevLett.126.070505}.
	
	As long as we know, for quantum discrete optimization, Quantum Advantage was proved for specific problems \cite{gilyen2021sub}. For MIS, Quantum Advantage was proved in \cite{choi2025beyond} adding to the Hamiltonian an additional term\footnote{a non-stochastic term $XX$ breaking the Lie algebra symmetry}. Companies like \href{https://www.dwavequantum.com/}{D-WAVE} are working to add this term to their Quantum Computers Hamiltonian \cite{PhysRevApplied.13.034037}. This hardware development is still at an experimental stage, whence we focus on existing quantum hardware.
	
	This chapter is structured as follows. Section \ref{sec:Problem formulation} introduces the MIS problem in the language of graph theory and recasts it as a Quadratic Unconstrained Binary Optimization (QUBO) problem. Subsection \ref{sec:Natural encoding} promotes the binary cost function to a self-adjoint operator on $(\mathbb{C}^2)^{\otimes n}$, the so-called \emph{cost Hamiltonian}, introduces the \emph{mixer Hamiltonian}, and establishes the natural encoding of MIS on neutral-atom \href{https://www.pasqal.com/wp-content/uploads/2025/03/Technical-Overview-for-Advanced-Users-Orion-Beta.pdf}{Pasqal-type} hardware. In subsection \ref{subsec:main_result}, we show how that MIS can be solved successfully on \href{https://www.pasqal.com/wp-content/uploads/2025/03/Technical-Overview-for-Advanced-Users-Orion-Beta.pdf}{Pasqal} machines, in a sufficiently large time horizon $T$. In section \ref{sec:Is there quantum advantage?}, we present estimates needed to prove Quantum Advantage. Section \ref{sec:How QAOA is solved} outlines the classical-quantum hybrid loop. Finally, section \ref{sec:Advanced State Stabilization via Integral Tracking Functionals} introduces an integral tracking functional that stabilizes the optimal control problem along the entire trajectory, suppressing diabatic leakage.
	
	A closely related approach is Quantum Annealing. Rather than targeting universal computation, Quantum Annealing hardware is uniquely designed to solve discrete optimization problems. See, for instance, the recent paper \cite{quinton2025quantum} benchmarking Quantum Annealing and classical optimizers. Several companies are developing Quantum Annealing hardware, including \href{https://www.dwavequantum.com/}{D-WAVE}. There exists already a robust interplay between Quantum Annealing and Control Theory (see, for instance, \href{https://docs.dwavequantum.com/en/latest/quantum_research/annealing.html}{Annealing Implementation and Controls} and \cite{PhysRevLett.126.070505,d2024controllability}).
	
	Both QAOA and Quantum Annealing are designed to run on current Noisy Intermediate-Scale Quantum (NISQ) devices. Other quantum discrete optimization algorithms, like Grover Adaptive Search \cite{fujiwara2025grover} and Quantum Walk, are designed for future Fault-Tolerant Quantum Computers. In these notes, we focus on QAOA and Quantum Annealing.
	
	\section{The Maximum Independent Set problem}\label{sec:Problem formulation}
	
	Let $G = (V, E)$ be an undirected simple graph, with vertex set $V = \{1, 2, \dots, n\}$ and edge set
	\begin{equation}
		E \subseteq \big\{ \{i,j\} \ \big| \ i, j \in V, \ i \neq j \big\}.
	\end{equation}
	
	\begin{definition}[Independent set]\label{def_independent_set}
		A subset $S \subseteq V$ is an \textbf{independent set} (or \emph{stable set}) of $G$ if and only if 
		\begin{equation}
			\forall \, i, j \in S \ \mbox{ with } \ i \neq j, \quad \{i, j\} \notin E.
		\end{equation}
	\end{definition}
	
	\begin{definition}[Maximum Independent Set (MIS)]\label{def_MIS}
		The \textbf{Maximum Independent Set} problem on $G$ consists in finding an independent set $S^* \subseteq V$ of maximum cardinality:
		\begin{equation}\label{mis_def}
			S^* \in \arg\max\big\{ |S| \ : \ S \subseteq V \mbox{ is an independent set of } G \big\}.
		\end{equation}
		The cardinality
		\begin{equation}\label{alpha}
			\alpha(G) \coloneqq |S^*|
		\end{equation}
		is called the \textbf{independence number} of $G$.
	\end{definition}
	
	\subsection{Binary-variable formulation}
	
	Following the bijection between subsets of $V$ and binary indicator vectors, we encode each $S \subseteq V$ as
	\begin{equation}\label{indicator_vector}
		\mathbf{x} = (x_1, \dots, x_n) \in \{0, 1\}^n, \quad x_i \coloneqq 
		\begin{cases}
			1 & \mbox{if } i \in S, \\
			0 & \mbox{if } i \notin S.
		\end{cases}
	\end{equation}
	Each component $x_i$ is a \emph{binary} variable taking values in $\{0,1\}$. The independence constraint of definition \ref{def_independent_set} reads, in these variables,
	\begin{equation}\label{indep_constraint}
		x_i \, x_j = 0, \quad \forall \, \{i, j\} \in E,
	\end{equation}
	since the two endpoints of an edge cannot both be selected. Define
	\begin{equation}\label{cost_function}
		g(\mathbf{x})\coloneqq \sum_{i=1}^n x_i.
	\end{equation}
	Problem \eqref{mis_def} is therefore equivalent to the constrained quadratic binary program
	\begin{equation}\label{mis_constrained}
		\max_{\mathbf{x} \in \{0,1\}^n}g(\mathbf{x}) = \sum_{i=1}^n x_i, \quad \mbox{subject to} \quad x_i x_j = 0, \ \forall \{i,j\} \in E.
	\end{equation}
	
	By introducing a penalty parameter $\lambda > 1$, problem \eqref{mis_constrained} can be recast as the unconstrained Quadratic Unconstrained Binary Optimization (QUBO) problem \cite{kochenberger2014unconstrained,10.3389/fphy.2014.00005}:
	\begin{equation}\label{mis_qubo}
		\min_{\mathbf{x} \in \{0,1\}^n} f(\mathbf{x}), \qquad f(\mathbf{x}) \coloneqq -\sum_{i=1}^n x_i + \lambda \sum_{\{i,j\} \in E} x_i \, x_j.
	\end{equation}
	The cost $f$ in \eqref{mis_qubo} is purely quadratic in the binary variables $x_i$, of the form $\mathbf{x}^\top Q \mathbf{x}$ for a symmetric matrix $Q \in \mathbb{R}^{n\times n}$ (recall $x_i^2 = x_i$ for $x_i \in \{0,1\}$, so the linear term may be absorbed into the diagonal of $Q$).
	
	\begin{proposition}[Penalty equivalence]\label{prop_penalty}
		Let $\lambda > 1$. Then every minimizer $\mathbf{x}^* \in \{0,1\}^n$ of \eqref{mis_qubo} satisfies the independence constraint \eqref{indep_constraint} and the corresponding subset $S^* = \{i \in V \, : \, x^*_i = 1\}$ is a Maximum Independent Set of $G$.
	\end{proposition}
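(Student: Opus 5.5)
The plan is a standard exchange argument on the QUBO cost $f$ of \eqref{mis_qubo}, in two steps. First, show that a minimizer cannot violate \eqref{indep_constraint}. Second, show that among feasible points, minimizing $f$ is the same as maximizing $g$ in \eqref{cost_function}.

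\textbf{Step 1 (feasibility of minimizers).} Suppose, towards a contradiction, that $\mathbf{x}^*$ minimizes $f$ and that some edge $\{i,j\}\in E$ has $x^*_i=x^*_j=1$. Let $\mathbf{y}$ be $\mathbf{x}^*$ with $x_i$ switched from $1$ to $0$. This flip changes the linear part of $f$ by $+1$. It also removes the penalty terms $\lambda x_ix_k$ for every neighbour $k$ of $i$ with $x^*_k=1$. There are $d_i\geq 1$ such neighbours, because $j$ is one. Hence
\begin{equation*}
f(\mathbf{y})-f(\mathbf{x}^*) = 1-\lambda d_i \leq 1-\lambda <0,
\end{equation*}
using $\lambda>1$. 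This contradicts minimality, so every minimizer satisfies \eqref{indep_constraint}.

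\textbf{Step 2 (optimality among independent sets).} On the feasible set the penalty term vanishes, so $f(\mathbf{x})=-g(\mathbf{x})=-|S|$, where $S$ is the support of $\mathbf{x}$. Let $S^\circ$ be any independent set, with indicator vector $\mathbf{x}^\circ$. Minimality of $\mathbf{x}^*$ gives
\begin{equation*}
-|S^*| = f(\mathbf{x}^*) \leq f(\mathbf{x}^\circ) = -|S^\circ|,
\end{equation*}
so $|S^*|\geq |S^\circ|$. Since $S^*$ is itself independent by Step 1, it follows that $S^*$ is a Maximum Independent Set with $|S^*|=\alpha(G)$ as in \eqref{alpha}. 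Existence of a minimizer is automatic because $\{0,1\}^n$ is finite.

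\textbf{Main obstacle.} The only delicate point is the counting in Step 1. Flipping one vertex can remove several penalty terms at once; this only helps, but the bound must be written as $1-\lambda d_i$ with $d_i\geq 1$, not as $1-\lambda$. The strict inequality $\lambda>1$ is exactly what rules out ties between infeasible and feasible configurations. At $\lambda=1$ the statement can fail: for a single edge, $(1,1)$ ties with $(1,0)$.
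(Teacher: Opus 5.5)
Your proof is correct and follows the paper's own argument: flipping one endpoint of a violated edge lowers $f$ by at least $\lambda-1>0$, which contradicts minimality, and on the feasible set $f=-g$, so the minimizer has maximum cardinality. Your exact count $f(\mathbf{y})-f(\mathbf{x}^*)=1-\lambda d_i$ and your single-edge counterexample at $\lambda=1$ are both accurate; they sharpen the paper's bound ``removes at least the edge contribution'' but use the same route.
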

	
	\begin{proof}
		Let $\mathbf{x}^*$ be a global minimizer of \eqref{mis_qubo} and assume, by contradiction, that there exists $\{i,j\} \in E$ with $x^*_i = x^*_j = 1$. Define $\tilde{\mathbf{x}}$ by setting $\tilde x_j = 0$ and $\tilde x_k = x^*_k$ for $k \neq j$. Then
		\begin{equation}
			f(\tilde{\mathbf{x}}) - f(\mathbf{x}^*) \leq 1 - \lambda \cdot 1 = 1 - \lambda < 0,
		\end{equation}
		where the inequality uses that flipping $x_j$ from $1$ to $0$ removes \emph{at least} the edge contribution from $\{i,j\}$ (it may remove more, but the sign is favorable) and adds $+1$ to $-\sum x_i$. This contradicts the global optimality of $\mathbf{x}^*$. Hence $\mathbf{x}^*$ is feasible for \eqref{mis_constrained}. Conversely, any feasible $\mathbf{x}$ has cost value $f(\mathbf{x})=-\sum_{i=1}^{n} x_i$ in \eqref{mis_qubo} and \eqref{mis_constrained}, so $\mathbf{x}^*$ is necessarily of maximum cardinality.
	\end{proof}
	
	The cardinality of the search space $\{0,1\}^n$ is $2^n < +\infty$, ensuring the existence of a global minimizer (uniqueness, in general, might fail). However, locating $\mathbf{x}_{\mbox{\tiny{opt}}}$ is NP-hard \cite{karp1972reducibility,pardalos1992complexity}, since MIS is one of Karp's original 21 NP-complete problems.
	
	\section{Neutral-atom analog quantum computers}
	\label{chapter:Neutral atoms quantum computers}
	
	\subsection{Introduction}
	\label{sec:Introduction}
	Neutral-atom quantum computers manipulate spatial configurations of atoms, such as ${}^{87}\text{Rb}$ trapped in optical tweezer arrays, to perform quantum operations \cite{Henriet2020quantumcomputing}. Information is encoded by assigning the logical state $|0\rangle$ to the atom's ground state and $|1\rangle$ to a highly excited Rydberg state. By exposing these atoms to highly controlled laser pulses, the system undergoes a time evolution governed by the controlled Schr\"{o}dinger equation. As illustrated in \cite{Henriet2020quantumcomputing}, neutral atoms quantum computers can operate either in digital mode or in analog mode. Digital mode is designed to program with discrete-time quantum gates and enjoys universal quantum computation capabilities, whereas analog mode is time-continuous and it is particularly suitable for discrete optimization. In these notes, we focus in the analog mode.
	
	For a general introduction on Quantum Computing (QC), see \cite{nielsen2010quantum}.
	
	\subsection{The Controlled Schr\"{o}dinger Equation}
	As in \eqref{ger equation}, in the field of quantum control, the evolution of a quantum state $|\psi(t)\rangle$ is often described by a bilinear controlled Schr\"{o}dinger equation
	\begin{equation}
		i\hbar \frac{d}{dt}\psi(t) = \left( H_0 + \sum_{k=1}^m u_k(t) H_k \right) \psi(t),
		\label{eq:boscain}
	\end{equation}
	where $H_0$ is the unforced (drift) Hamiltonian representing the internal dynamics of the system, $\{H_k\}_{k=1}^{m}$ are the interaction Hamiltonians, and $\{u_k(t)\}_{k=1}^{m}$ are the real-valued, time-dependent control fields driving the system.
	
	\subsection{Application to the Pasqal Architecture}
	\label{subsec:Application to the Pasqal Architecture}
	
	For a neutral-atom quantum processor like \href{https://www.pasqal.com/wp-content/uploads/2025/03/Technical-Overview-for-Advanced-Users-Orion-Beta.pdf}{Pasqal's Orion}, the physical controls correspond to a two-photon laser transition scheme. The system is globally driven by fields parameterized by their Rabi frequency amplitude $\Omega(t)$ and their detuning from the Rydberg transition resonance $\delta(t)$, as in \href{https://docs.pasqal.com/pulser/programming/}{Pasqal documentation}. 
	
	Inspired by \cite{tibaldi2025analog,Henriet2020quantumcomputing}, the total Hamiltonian $H(t)$ governing the atomic register of $n$ qubits is given by:
	\begin{equation}\label{neutral_atoms_hamiltonian}
		H(t) = \frac{\hbar \, \Omega(t)}{2} \sum_{k=1}^{n} X_k - \hbar \, \delta(t) \sum_{i=1}^{n} \hat n_i + \sum_{\substack{i,j=1 \\ i<j}}^{n} \frac{C_6}{R_{i,j}^6} \, \hat n_i \, \hat n_j,
	\end{equation}
	where
	\begin{itemize}
		\item as in chapter \ref{chapter:Mathematical framework}, the state space is the following Hilbert space over $\mathbb{C}$\footnote{of dimension, over $\mathbb{C}$, $2^n$}
		\begin{equation}
			\mathcal{H} \defeq \underbrace{\mathbb{C}^2 \otimes \dots \otimes \mathbb{C}^2}_{n\text{ times}} = \bigotimes_{j=1}^n \mathbb{C}^2;
		\end{equation}
		\item $X_k=\begin{pmatrix} 0 & 1 \\ 1 & 0 \end{pmatrix}_k$ is the Pauli-$X$ operator acting on qubit $k$;
		\item $\hat{n}_k = (I - Z_k)/2=\begin{pmatrix} 0 & 0 \\ 0 & 1 \end{pmatrix}_k=|1\rangle\langle 1|_k$ is the Rydberg occupation number at site $k$;
		\item $C_6\in \mathbb{R}$ defines the van der Waals interaction strength;
		\item $R_{i,j}\in (0,+\infty)$ is the spatial distance between atoms $i$ and $j$.
	\end{itemize}
	
	To map the Pasqal Hamiltonian \eqref{neutral_atoms_hamiltonian} exactly to the control framework of \eqref{eq:boscain}, we set $m=2$ and define the terms as follows:
	\begin{itemize}
		\item \textbf{Drift Hamiltonian ($H_0$):} The constant van der Waals interactions between pairs of atoms,
		\begin{equation}\label{drift_hamiltonian_pasqal}
			H_0 = \sum_{\substack{i,j=1 \\ i<j}}^{n} \frac{C_6}{R_{i,j}^6} \, \hat{n}_i \, \hat{n}_j.
		\end{equation}
		\item \textbf{Control fields ($u_1(t), u_2(t)$):} The time-dependent parameters dictated by the global laser fields,
		\begin{equation}
			u_1(t) = \hbar \Omega(t), \quad u_2(t) = \hbar \delta(t).
		\end{equation}
		\item \textbf{Control Hamiltonians ($H_1, H_2$):} The operators coupling to the control fields,
		\begin{equation}\label{control_hamiltonians_pasqal}
			H_1 = \frac{1}{2} \sum_{k=1}^{n} X_k, \quad H_2 = - \sum_{k=1}^{n} \hat{n}_k.
		\end{equation}
	\end{itemize}
	
	Substituting these definitions back into the general controlled Schr\"{o}dinger equation yields the exact time-evolution equation for the neutral-atom register:
	\begin{equation}\label{pasqal_equation_explicit}
		i\hbar \frac{d}{dt}\psi(t) = \left( \sum_{\substack{i,j=1 \\ i<j}}^{n} \frac{C_6}{R_{i,j}^6} \hat{n}_i \hat{n}_j + \hbar \Omega(t) \left[ \frac{1}{2}\sum_{k=1}^{n}X_k \right] - \hbar \delta(t) \left[ \sum_{k=1}^{n}\hat{n}_k \right] \right) \psi(t).
	\end{equation}
	
	As we shall see, with respect to superconducting quantum computers studied in section \ref{chapter:Superconducting quantum computers}, neutral atoms quantum computers (in analog mode)
	\begin{itemize}
		\item have weaker controllability property (they are not universal quantum computers);
		\item enjoys a better connectivity.
	\end{itemize}
	
	\subsection{Controllability properties of the neutral-atom register}\label{subsec:controllability_pasqal}
	
	In contrast with the superconducting setting of chapter \ref{chapter:QA_QFT}, the Pasqal bilinear Schr\"{o}dinger equation \eqref{pasqal_equation_explicit} is driven by only two \emph{global} scalar controls $\Omega(t)$ and $\delta(t)$, which act uniformly on all the qubits (through $H_1 = \tfrac{1}{2}\sum_{k=1}^{n} X_k$ and $H_2 = -\sum_{k=1}^{n} \hat n_k$). A direct consequence is that the reachable set of \eqref{pasqal_equation_explicit} is much smaller than $SU(2^n)$: indeed, every symmetry of the drift $H_0$ in \eqref{drift_hamiltonian_pasqal} that also commutes with $H_1$ and $H_2$ is automatically conserved by the dynamics. Similar controllability results were proved in \cite{d2024controllability}.
	
	\begin{proposition}[Lack of full operator controllability]\label{prop_pasqal_no_full_controllability}
		Let $\sigma \in \mathfrak{S}_n$ be a permutation of the atom indices which is a symmetry of the weighted graph $\big(\{1,\dots,n\}, \{(i,j,C_6/R_{i,j}^6)\}\big)$, namely $R_{\sigma(i),\sigma(j)}=R_{i,j}$ for every $i,j$. Then the induced permutation operator $P_\sigma$ on $\mathcal{H}$ commutes with $H_0$, $H_1$ and $H_2$, hence with the full Hamiltonian in \eqref{pasqal_equation_explicit} at every time $t\in[0,T]$. In particular, the bilinear Schr\"{o}dinger equation \eqref{pasqal_equation_explicit} is \emph{not} operator controllable on $SU(2^n)$ in the sense of Definition \ref{def_1}.
	\end{proposition}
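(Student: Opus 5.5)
The plan is a direct symmetry computation, followed by an invariance argument along trajectories and a counterexample target. Throughout, $\sigma$ must be a \emph{non-identity} permutation (so in particular $n\geq 2$). For $\sigma=\mathrm{id}$ we get $P_\sigma=I$, which yields no obstruction, so I will state this tacitly assumed hypothesis explicitly.

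\emph{Step 1: conjugation rule.} I would define $P_\sigma$ on the computational basis by
\begin{equation*}
P_\sigma|x_1\cdots x_n\rangle = |x_{\sigma^{-1}(1)}\cdots x_{\sigma^{-1}(n)}\rangle .
\end{equation*}
This is a permutation matrix, hence unitary. Checking on product vectors, I would verify $P_\sigma A_k P_\sigma^{-1}=A_{\sigma(k)}$ for every $2\times 2$ matrix $A$, and $P_\sigma A_iB_jP_\sigma^{-1}=A_{\sigma(i)}B_{\sigma(j)}$.

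\emph{Step 2: commutation with each Hamiltonian.} From Step 1,
\begin{equation*}
P_\sigma H_1P_\sigma^{-1}=\tfrac12\sum_k X_{\sigma(k)}=H_1,
\end{equation*}
since $\sigma$ is a bijection, and likewise for $H_2$. For the drift,
\begin{equation*}
P_\sigma H_0P_\sigma^{-1}=\sum_{i<j}\frac{C_6}{R_{i,j}^6}\hat n_{\sigma(i)}\hat n_{\sigma(j)} .
\end{equation*}
Because the $\hat n$'s commute and $R$ is symmetric, the sum is really over unordered pairs $\{i,j\}$. Reindexing by $\{a,b\}=\{\sigma(i),\sigma(j)\}$ and using $R_{\sigma(i),\sigma(j)}=R_{i,j}$ returns $H_0$. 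Hence $P_\sigma$ commutes with $H(t)$ for every choice of $\Omega(t),\delta(t)$.

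\emph{Step 3: invariance of trajectories.} Let $U$ solve the matrix Cauchy problem \eqref{matrix_schrodinger}, where the factor $\hbar$ only rescales the controls. Then $\widetilde U(t)\defeq P_\sigma U(t)P_\sigma^{-1}$ solves the same Cauchy problem with the same controls, since $P_\sigma$ commutes with $H(t)$ and $\widetilde U(0)=I$. By uniqueness (Remark \ref{remark_well_posedeness}), $\widetilde U=U$. So every reachable operator lies in the commutant of $P_\sigma$, for every $T$ and every admissible control.

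\emph{Step 4: a target outside the commutant.} Choose $k$ with $\sigma(k)\neq k$, and take $\Gamma=e^{i\theta}X_k$ with the phase $\theta$ chosen so that $\det\Gamma=1$, i.e.\ $\Gamma\in SU(2^n)$. Then $P_\sigma\Gamma P_\sigma^{-1}=e^{i\theta}X_{\sigma(k)}\neq\Gamma$, because the two act on different tensor factors. Hence $\Gamma$ is not reachable, and \eqref{pasqal_equation_explicit} is not operator controllable in the sense of Definition \ref{def_1}.

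The only delicate points are the reindexing over unordered pairs in Step 2 and the need for $\sigma\neq\mathrm{id}$; the rest is routine.
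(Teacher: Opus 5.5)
Your proof is correct and follows the paper's argument. Both show that conjugation by $P_\sigma$ fixes $H_0$, $H_1$ and $H_2$, then use uniqueness for the matrix Cauchy problem to confine every trajectory to the commutant of $P_\sigma$. Your Step~4 adds an explicit unreachable target $e^{i\theta}X_k$ with $\sigma(k)\neq k$, which proves the paper's unjustified claim that this centralizer is a proper subgroup of $SU(2^n)$ when $\sigma\neq\mathrm{id}$; the paper's proof also tacitly needs that hypothesis.
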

	We highlight that, as we mentioned, the above result holds for Pasqal devices in analog mode. Pasqal devices in digital model are Universal Quantum Computers (operator controllability holds). In these notes, we focus on analog mode.
	\begin{proof}[Proof of Proposition \ref{prop_pasqal_no_full_controllability}]
		Since $\sigma$ preserves the pairwise distances $R_{i,j}$, the drift Hamiltonian \eqref{drift_hamiltonian_pasqal} satisfies $P_\sigma H_0 P_\sigma^{-1}=H_0$. The control Hamiltonians \eqref{control_hamiltonians_pasqal} are symmetric sums over all atoms, so $P_\sigma H_1 P_\sigma^{-1}=H_1$ and $P_\sigma H_2 P_\sigma^{-1}=H_2$. By uniqueness of the solution to the matrix Cauchy problem \eqref{matrix_schrodinger_inftime}, every admissible trajectory $U(t)$ satisfies $P_\sigma U(t) P_\sigma^{-1}=U(t)$, that is $[U(t),P_\sigma]=0$. Hence the reachable set is contained in the centralizer of $P_\sigma$ in $SU(2^n)$, which is a proper subgroup whenever $\sigma\neq\mathrm{id}$. This rules out operator controllability on the whole of $SU(2^n)$.
	\end{proof}
	
	Proposition \ref{prop_pasqal_no_full_controllability} is not a pathology: it reflects the fact that the neutral-atom platform considered here is an \emph{analog} quantum simulator tailored to a specific class of Hamiltonian problems (Ising/Rydberg-type), rather than a \emph{universal digital} quantum computer. The appropriate notion of controllability in this setting is not full operator controllability on $SU(2^n)$, but rather the weaker property of being able to steer, in polynomial time in $n$, the initial state $\phi_B=|0\rangle^{\otimes n}$ to a ground state of the MIS cost Hamiltonian (see Proposition \ref{prop_mis_hamiltonian} and section \ref{sec:QAOA optimal control}). Since $\phi_B=|0\rangle^{\otimes n}$ is precisely the ground state of $\ham(0)=H_{0}+\hbar\delta_{0}\sum_{k=1}^{n}\hat n_{k}$ (Lemma~\ref{lemma_endpoints}(a)), this steering is realised by the \emph{adiabatic} programme of \cite{farhi2000quantum}: slowly deform $\ham(0)$ into $\ham(1)$, whose ground manifold encodes the MIS (Lemma~\ref{lemma_endpoints}(b)). This is the control-theoretic content of the quantum-annealing programme of \cite{PhysRevLett.126.070505} and will be formulated precisely in section \ref{sec:QAOA optimal control}.
	
	\begin{remark}[Role of the atom layout]\label{remark_layout_control}
		Although the pulses $\Omega(t)$ and $\delta(t)$ are global, the user retains an additional \emph{discrete} control handle: the spatial layout $\{r_i\}_{i=1}^n \subset \mathbb{R}^d$ of the atoms in the optical tweezer array, which enters the drift Hamiltonian \eqref{drift_hamiltonian_pasqal} through the pairwise distances $R_{i,j}$. In the Rydberg-blockade regime, choosing the layout is equivalent to choosing the edge set $E$ of a unit-disk graph, whence to choosing the problem instance $G=(V,E)$ of MIS itself (see also \cite[section II]{Henriet2020quantumcomputing}). Thus, in sharp contrast with the superconducting digital paradigm (where the hardware graph is fixed and the program is encoded in the time-dependent controls), the neutral-atom analog paradigm encodes the program in the \emph{geometry} of the register and uses the time-dependent controls only to steer along the adiabatic path.
	\end{remark}
	
	\subsection{Natural encoding on neutral-atom hardware}\label{sec:Natural encoding}
	
	Typically, Quantum Computers (QC) are efficient at solving problems whose input and output are discrete, while the intermediate computation is continuous. In this spirit, MIS is solved on a quantum computer by continuous-time methods: Quantum Annealing \cite{farhi2000quantum,aharonov2008adiabatic} and the Quantum Approximate Optimization Algorithm (QAOA) \cite{farhi2014quantum,farhi2016quantum}. In \cite{PhysRevLett.126.070505}, both are interpreted within the unifying framework of quantum optimal control.
	
	We now show how MIS can be encoded in the physics of the neutral-atom Pasqal Quantum Computers, through an appropriate spatial configuration of the atoms. Take the Hamiltonian \eqref{neutral_atoms_hamiltonian}, with controls $(\Omega,\delta)\equiv (0,1)$, getting
	\begin{equation}\label{cost_hamiltinian+remainder}
		H_C\coloneqq - \hbar  \sum_{i=1}^{n} \hat n_i + \sum_{\substack{i,j=1 \\ i<j}}^{n} \frac{C_6}{R_{i,j}^6} \, \hat n_i \, \hat n_j,
	\end{equation}
	Roughly speaking, appropriately positioning the atoms realizing the qubits, $H_C$ represents \eqref{mis_qubo}, up to a \emph{tail energy}.
	
	\begin{proposition}[Rydberg-blockade encoding of MIS]\label{prop_blockade_encoding}
		Let $G=(V,E)$ be a graph on $n$ vertices and let $R_b>0$. Suppose the atoms are placed at positions $\{r_i\}_{i=1}^n \subset \mathbb{R}^d$ ($d \in \{2,3\}$) satisfying
		\begin{equation}\label{rydberg_radius}
			\begin{cases}
				\|r_i - r_j\| \leq R_b & \mbox{if } \{i,j\} \in E, \\
				\|r_i - r_j\| > R_b   & \mbox{if } \{i,j\} \notin E.
			\end{cases}
		\end{equation}
		Then the drift Hamiltonian \eqref{drift_hamiltonian_pasqal} satisfies, for every computational-basis vector $|\mathbf{x}\rangle$,
		\begin{equation}\label{drift_on_comp_basis}
			H_0 |\mathbf{x}\rangle = \bigg(\sum_{\substack{\{i,j\}\in E\\ i<j}} \frac{C_6}{R_{i,j}^6} x_i x_j + \sum_{\substack{\{i,j\}\notin E \\ i<j}} \frac{C_6}{R_{i,j}^6} x_i x_j\bigg) |\mathbf{x}\rangle.
		\end{equation}
		In particular, for any bitstring $\mathbf{x}$ encoding an independent set of $G$ (i.e., $x_ix_j=0$ for all $\{i,j\}\in E$),
		\begin{equation}\label{drift_on_indep_set}
			H_0 |\mathbf{x}\rangle = \sum_{\substack{\{i,j\}\notin E \\ i<j}} \frac{C_6}{R_{i,j}^6} x_i x_j\, |\mathbf{x}\rangle,
		\end{equation}
		while for any bitstring violating the independence constraint on at least one edge $\{i,j\}\in E$,
		\begin{equation}\label{drift_penalty}
			\langle \mathbf{x}|H_0|\mathbf{x}\rangle \geq \frac{C_6}{R_b^6}.
		\end{equation}
		Finally, for any bitstring $\mathbf{x}$, we have
		\begin{equation}\label{function_promotion}
			\langle \mathbf{x}|H_C|\mathbf{x}\rangle = -\hbar g(\mathbf{x}) + \sum_{\substack{\{i,j\}\in E\\ i<j}} \frac{C_6}{R_{i,j}^6} x_i x_j + \tau(\bx),
		\end{equation}
		where
		\begin{itemize}
			\item $g(\mathbf{x})$ \eqref{cost_function} is the cost function to be maximized in MIS;
			\item the addendum
			\begin{equation}
				\sum_{\substack{\{i,j\}\in E\\ i<j}} \frac{C_6}{R_{i,j}^6} x_i x_j
			\end{equation}
			represents the independence constraint;
			\item the term \begin{equation}
				\tau(\bx)\defeq\!\!\sum_{\substack{\{i,j\}\notin E\\ i<j}}\!\! \frac{C_{6}}{R_{i,j}^{6}}\,x_{i}x_{j}\ \ge 0,
			\end{equation}
			is a \emph{tail energy} we will study in subsection \ref{subsec:spectral_gap_Pasqal}.
		\end{itemize}
	\end{proposition}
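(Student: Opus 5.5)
The whole statement follows from the fact that every operator involved is diagonal in the computational basis. I will first record that $\hat n_i|\mathbf{x}\rangle = x_i|\mathbf{x}\rangle$ for every bitstring $\mathbf{x}\in\{0,1\}^n$. This holds because $\hat n_i=|1\rangle\langle 1|_i$ acts only on the $i$-th tensor factor, and $|\mathbf{x}\rangle=|x_1\rangle\otimes\dots\otimes|x_n\rangle$. Since projectors on different sites commute, it follows that $\hat n_i\hat n_j|\mathbf{x}\rangle=x_ix_j|\mathbf{x}\rangle$.

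\textbf{Step 1: the eigenvalue formula \eqref{drift_on_comp_basis}.} Applying $H_0$ from \eqref{drift_hamiltonian_pasqal} term by term gives $H_0|\mathbf{x}\rangle=\sum_{i<j}\frac{C_6}{R_{i,j}^6}x_ix_j|\mathbf{x}\rangle$. Splitting the index set $\{(i,j):i<j\}$ into pairs with $\{i,j\}\in E$ and pairs with $\{i,j\}\notin E$ yields \eqref{drift_on_comp_basis}. Note that this step does not yet use the geometric condition \eqref{rydberg_radius}.

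\textbf{Step 2: independent sets, \eqref{drift_on_indep_set}.} If $\mathbf{x}$ encodes an independent set, then $x_ix_j=0$ on every edge, so the first sum in \eqref{drift_on_comp_basis} vanishes and \eqref{drift_on_indep_set} follows.

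\textbf{Step 3: the penalty bound \eqref{drift_penalty}.} Here I need $C_6>0$, which is the physical van der Waals sign. The statement only says $C_6\in\mathbb{R}$, so I will state this assumption explicitly; without it the bound fails. Since $|\mathbf{x}\rangle$ is a unit eigenvector, $\langle\mathbf{x}|H_0|\mathbf{x}\rangle$ equals the eigenvalue from Step 1. Every term in that eigenvalue is nonnegative, so I can drop all terms except one violated edge $\{i_0,j_0\}\in E$, for which $x_{i_0}x_{j_0}=1$. By \eqref{rydberg_radius}, $R_{i_0,j_0}\le R_b$, hence $C_6/R_{i_0,j_0}^6\ge C_6/R_b^6$.

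\textbf{Step 4: the formula \eqref{function_promotion} for $H_C$.} In \eqref{cost_hamiltinian+remainder}, the linear part contributes $-\hbar\sum_i x_i=-\hbar g(\mathbf{x})$, by the definition \eqref{cost_function} of $g$. The quadratic part contributes exactly the eigenvalue from Step 1. Its split into edge and non-edge terms gives the constraint term and the tail energy $\tau(\mathbf{x})$, and $\tau(\mathbf{x})\ge0$ again follows from $C_6>0$.

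Overall, the only delicate point is the sign of $C_6$, needed for \eqref{drift_penalty} and for $\tau\ge0$; everything else is routine diagonal bookkeeping.
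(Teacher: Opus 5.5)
Your proposal is correct and follows the paper's proof essentially step for step. Like the paper, you use the diagonal action $\hat n_i|\mathbf{x}\rangle=x_i|\mathbf{x}\rangle$, split the pair sum into edge and non-edge terms, and obtain \eqref{drift_penalty} by discarding every nonnegative term except one violated edge with $R_{i,j}\le R_b$. Your explicit remark that $C_6>0$ is needed for \eqref{drift_penalty} and for $\tau(\mathbf{x})\ge 0$ is a useful gain in precision: the paper's proof uses this sign silently, the Hamiltonian is introduced with only $C_6\in\mathbb{R}$, and $C_6>0$ is adopted as a physical convention only later, in the endpoint analysis.
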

	
	\begin{proof}
		The operator $\hat n_i$ has spectrum $\{0, 1\}$ and acts as the multiplication-by-$x_i$ operator on the computational basis: for every bitstring $\mathbf{x} = (x_1,\dots,x_n) \in \{0,1\}^n$,
		\begin{equation}\label{eq_projector_action}
			\hat n_i \, |x_1 \dots x_n\rangle = x_i \, |x_1 \dots x_n\rangle.
		\end{equation}
		Identity \eqref{eq_projector_action} is the precise sense in which $\hat n_i$ is the natural quantum lift of the classical binary variable $x_i$, with no need of introducing spin variables.
		By \eqref{eq_projector_action}, the drift Hamiltonian \eqref{drift_hamiltonian_pasqal} acts on $|\mathbf{x}\rangle$ as
		\begin{equation}\label{fsdgs}
			H_0|\mathbf{x}\rangle = \sum_{\substack{i,j=1 \\ i<j}}^{n} \frac{C_6}{R_{i,j}^6}x_ix_j\,|\mathbf{x}\rangle,
		\end{equation}
		which splits as the two sums in \eqref{drift_on_comp_basis}. For $\mathbf{x}$ encoding an independent set, $x_ix_j=0$ whenever $\{i,j\}\in E$, giving \eqref{drift_on_indep_set}. If $\mathbf{x}$ violates the independence constraint on at least one edge $\{i,j\}\in E$, then $x_i=x_j=1$ and $\|r_i-r_j\|\leq R_b$ by \eqref{rydberg_radius}, so that
		\begin{equation}
			\langle \mathbf{x}|H_0|\mathbf{x}\rangle \geq \frac{C_6}{R_{i,j}^6}\geq \frac{C_6}{R_b^6}.\qedhere
		\end{equation}
		In conclusion, \eqref{function_promotion} follows from \eqref{eq_projector_action}, together with \eqref{fsdgs}.
	\end{proof}
	
	The key physical consequence is that when the van der Waals coupling dominates the Rabi frequency, $C_6/R_b^6 \gg \hbar\Omega_{\max}$ (the \emph{Rydberg-blockade regime}), the doubly-excited state $|11\rangle_{ij}$ on any edge $\{i,j\}\in E$ is energetically forbidden during the evolution. The drift Hamiltonian thereby enforces the independence constraint \eqref{indep_constraint} \emph{by physics}, not by software, with an effective penalty parameter $\lambda_{\mbox{\tiny{eff}}} \sim C_6 / (R_b^6 \, \hbar \Omega_{\max})$.
	
	\begin{corollary}[Effective Hamiltonian in the blockade subspace]\label{cor_effective_hamiltonian}
		Let $\mathcal{H}_{\mathrm{IS}} \subseteq \mathcal{H}$ be the subspace spanned by computational-basis states encoding independent sets of $G$, and let $P_{\mathrm{IS}}:\mathcal{H}\to \mathcal{H}_{\mathrm{IS}}$ be the orthogonal projection. In the blockade regime $C_6/R_b^6 \gg \hbar\Omega_{\max}$, the evolution under \eqref{pasqal_equation_explicit}, when initialized in $\mathcal{H}_{\mathrm{IS}}$, is well approximated by the projected dynamics governed by
		\begin{equation}\label{effective_hamiltonian_IS}
			H_{\mathrm{eff}}(t) = P_{\mathrm{IS}}\bigg(\frac{\hbar\Omega(t)}{2}\sum_{k=1}^n X_k - \hbar\delta(t)\sum_{k=1}^n \hat n_k\bigg)P_{\mathrm{IS}} + P_{\mathrm{IS}} H_0 P_{\mathrm{IS}}.
		\end{equation}
		Since the detuning term $-\hbar\delta(t)\sum_{k=1}^{n} \hat n_k$ acts on independent sets as $-\hbar\delta(t)|S|$ (where $|S|=\sum_{k=1}^{n} x_k$), maximizing $|S|$ via $\delta(t)>0$ is equivalent to solving the MIS problem on $G$.
	\end{corollary}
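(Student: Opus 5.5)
The plan is to read Corollary \ref{cor_effective_hamiltonian} as a standard adiabatic-elimination (Schrieffer--Wolff type) statement, with the large parameter $\Lambda \defeq C_6/R_b^6$ and the small ratio $\varepsilon \defeq \hbar\Omega_{\max}/\Lambda$. I will split $\mathcal{H}=\His\oplus\His^\perp$, with $P_{\mathrm{IS}}$ and $Q\defeq I-P_{\mathrm{IS}}$.

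The first step collects the structural facts already available. The drift $H_0$ and the detuning term $-\hbar\delta(t)\sum_k\hat n_k$ are diagonal in the computational basis. By Proposition \ref{prop_blockade_encoding}, they therefore commute with $P_{\mathrm{IS}}$. Moreover, by \eqref{drift_penalty}, $QH_0Q\geq \Lambda\, Q$. Only the Rabi term $\tfrac{\hbar\Omega(t)}{2}\sum_k X_k$ couples $\His$ and $\His^\perp$. Its off-diagonal block $V(t)\defeq Q\,\tfrac{\hbar\Omega(t)}{2}\sum_k X_k\,P_{\mathrm{IS}}$ satisfies $\|V(t)\|\leq \tfrac{n}{2}\hbar\Omega_{\max}$. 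A single $X_k$ changes one bit, so from an independent set it either stays in $\His$ or creates exactly one violated edge.

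The second step is the estimate in the interaction picture. Let $U$ be the true propagator and $U_{\mathrm{eff}}$ the propagator of \eqref{effective_hamiltonian_IS} on $\His$. Duhamel's formula gives
\begin{equation*}
U(t)P_{\mathrm{IS}}-U_{\mathrm{eff}}(t)P_{\mathrm{IS}}=-\tfrac{i}{\hbar}\int_0^t U(t)U(s)^{-1}\,V(s)\,U_{\mathrm{eff}}(s)P_{\mathrm{IS}}\,ds .
\end{equation*}
The crude bound here is $O(tn\hbar\Omega_{\max})$, which is not small. The key improvement is to integrate by parts in $s$, using the fast phase $e^{-isQH_0Q/\hbar}$ on $\His^\perp$. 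Its frequencies there exceed the $\His$ energies by a gap of order $\Lambda$, since the diagonal energies on $\His$ are bounded by the tail $\tau(\bx)$ plus $\hbar|\delta|n$, and these must be assumed $\ll\Lambda$. Each integration by parts produces a factor $(\text{gap})^{-1}$. The boundary terms are then $O(n\hbar\Omega_{\max}/\Lambda)=O(n\varepsilon)$. The bulk term involves $\dot\Omega$, $\dot\delta$, and one further application of $V$, and is $O\big((n\varepsilon)^2 t\Lambda/\hbar+ t\,n\|\dot\Omega\|_\infty\hbar/\Lambda\big)$. This yields the error bound $\|U(t)P_{\mathrm{IS}}-U_{\mathrm{eff}}(t)P_{\mathrm{IS}}\|\to0$ as $\Lambda\to\infty$ with $T$, $n$ and the controls fixed (Lipschitz controls). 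Equivalently, one can perform one Schrieffer--Wolff rotation $e^{S}$ with $\|S\|=O(n\varepsilon)$, which block-diagonalizes $H$ up to $O(n^2\hbar\Omega_{\max}\varepsilon)$. I would present this version as it is cleaner, and it also shows that the second-order correction to \eqref{effective_hamiltonian_IS} is of size $n^2\hbar\Omega_{\max}\varepsilon$.

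The last sentence of the statement is direct. On $\His$, $-\hbar\delta\sum_k\hat n_k|\bx\rangle=-\hbar\delta\, g(\bx)|\bx\rangle$ by \eqref{eq_projector_action}. For $\delta>0$ and $\Omega=0$, the ground states of $P_{\mathrm{IS}}(H_2\delta+H_0)P_{\mathrm{IS}}$ therefore maximize $g$, provided the tail energy $\tau$ is dominated by $\hbar\delta$. This uses Proposition \ref{prop_penalty} in the regime where $\lambda_{\mathrm{eff}}>1$.

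The main obstacle is that ``well approximated'' has to be made quantitative with the $n$-dependence explicit. Both $\|V\|$ and the $\His$ energy spread grow with $n$, so the regime becomes $\Lambda\gg n\hbar\Omega_{\max}$, not merely $\Lambda\gg\hbar\Omega_{\max}$. I would state the corollary with this explicit hypothesis and error bound rather than try to get $n$-uniform estimates. If needed, I would sharpen the $n$-dependence by using locality: $V$ is a sum of local terms, so Lieb--Robinson-type arguments apply.
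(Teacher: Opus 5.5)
Your proposal is correct in outline, and it takes a genuinely different route from the paper, which gives no proof of this corollary beyond the heuristic paragraph that precedes it. There the corollary is read off Proposition~\ref{prop_blockade_encoding} using two inputs only:
\begin{itemize}
\item the penalty bound \eqref{drift_penalty}, under which a violating string costs at least $C_6/R_b^6$ and is declared ``energetically forbidden'' once this dominates $\hbar\Omega_{\max}$;
\item the diagonal action \eqref{eq_projector_action}, which gives the detuning sentence.
\end{itemize}
The paper's heuristic buys brevity and names the mechanism. Your Duhamel/Schrieffer--Wolff argument buys an actual error bound. The Schrieffer--Wolff form is the right way to make your integration by parts precise, since the Duhamel integrand carries the full propagator $U(t)U(s)^{-1}$ rather than a bare phase.

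More importantly, your argument exposes hypotheses that the statement hides:
\begin{itemize}
\item the regime must scale with $n$;
\item the second-order shift of size $\|V\|^2/\Lambda$, which is omitted from \eqref{effective_hamiltonian_IS}, makes the error grow linearly in $t$; this matters for the long anneals of Theorem~\ref{thm_approximate_controllability_MIS};
\item the controls need some regularity.
\end{itemize}
The last point is not merely technical. Take a single edge ($n=2$), $\delta\equiv0$, and the admissible, even smooth, control $\Omega(t)=\tfrac{\Omega_{\max}}{2}\bigl(1+\cos(\omega t)\bigr)$ with $\hbar\omega=C_6/R_{1,2}^6$. The transition $(|01\rangle+|10\rangle)/\sqrt{2}\to|11\rangle$ is then driven resonantly with coupling of order $\hbar\Omega_{\max}$. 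An $O(1)$ population therefore leaves $\His$ within time $O(1/\Omega_{\max})$, however large $C_6/R_b^6$ is. Here $\|\dot\Omega\|_\infty\sim\Omega_{\max}C_6/(\hbar R_{1,2}^6)$, which is exactly what your assumption of a Lipschitz constant held fixed as $\Lambda\to\infty$ excludes. So keep that hypothesis explicit in your statement.

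Three small corrections.

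First, adding a vertex that is adjacent to several members of $S$ creates \emph{at least} one violated edge, not exactly one. This is harmless, because you only use the lower bound $\geq\Lambda$.

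Second, your assumption $\tau^{\max}+\hbar|\delta|n\ll\Lambda$ can be dropped. Only pairs coupled by $V$ enter the first-order generator. For $\bx$ independent and $\by=X_k\bx\notin\His$, the diagonal energy difference is $\sum_{j\in S}J_{kj}-\hbar\delta\geq\Lambda-\hbar\delta_{\max}$. Since $\Omega\geq0$, $V$ has nonnegative entries, so the entrywise-defined generator satisfies $\|S\|\leq\|V\|/(\Lambda-\hbar\delta_{\max})$ regardless of the tails. This matters because tails can exceed $\Lambda$ in dense layouts: three independent atoms at mutual distance slightly above $R_b$ already give $\tau\approx3\Lambda$.

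Third, independence is automatic on $\His$, so Proposition~\ref{prop_penalty} plays no role. Maximality of the ground states of $P_{\mathrm{IS}}\bigl(H_0-\hbar\delta\sum_k\hat n_k\bigr)P_{\mathrm{IS}}$ needs only $\tau^{\max}<\hbar\delta$, exactly as in Step~2 of the proof of Lemma~\ref{lem:window}.
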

	
	\begin{remark}[Geometric restriction]\label{remark_geometric_restriction}
		Not every graph $G$ admits an embedding in $\mathbb{R}^d$ ($d=2,3$) compatible with \eqref{rydberg_radius}: only the so-called \emph{unit-disk graphs} (UDGs) do. In graph-theoretic terms, condition \eqref{rydberg_radius} realizes $G$ as a UDG with threshold radius $R_b$. For more general graph topologies, one resorts either to multi-level encodings \cite{nguyen2023quantum} or to ancillary penalty schemes implemented in software.
	\end{remark}
	
	\begin{remark}[The atom layout as a design variable]\label{remark_layout_design}
		The spatial layout $\{r_i\}_{i=1}^n$ is not a dynamical control but a \emph{discrete design variable}, chosen before the experiment. By Proposition \ref{prop_blockade_encoding}, selecting the layout is equivalent to selecting the graph $G=(V,E)$, i.e., the problem instance. This is a fundamental difference with the superconducting digital paradigm of chapter \ref{chapter:QA_QFT}, where the hardware graph (the coupling map $\mathcal{E}$) is fixed and the problem instance is encoded entirely in the time-dependent controls.
	\end{remark}
	
	\section{Approximate state controllability and the QAOA optimal control formulation}\label{sec:QAOA optimal control}
	
	In section \ref{sec:Natural encoding}, we established that the Rydberg blockade naturally enforces the independence constraint (Proposition~\ref{prop_blockade_encoding}). We take as initial state the all-zero computational state
	\begin{equation}\label{initial_state_phiB}
		\phi_B = |0\rangle^{\otimes n} = |\mathbf{0}\rangle .
	\end{equation}
	In the standard Quantum Approximate Optimization Algorithm (QAOA) \cite{farhi2014quantum} one initialises in the uniform superposition $|+\rangle^{\otimes n}$, the ground state of the transverse-field mixer. Here, however, the controlled Hamiltonian is the Pasqal Hamiltonian \eqref{neutral_atoms_hamiltonian}, and a different choice is more natural: by Lemma~\ref{lemma_endpoints}(a), $\phi_B=|0\rangle^{\otimes n}$ is the \emph{unique} ground state of
	\begin{equation}\label{endpoint_H0_intro}
		\ham(0)=H_{0}+\hbar\delta_{0}\sum_{k=1}^{n}\hat n_{k},
	\end{equation}
	the Pasqal Hamiltonian at the control endpoint $(\Omega,\delta)=(0,-\delta_0)$, with spectral gap $\hbar\delta_0>0$ above it. Starting in $\phi_B=|0\rangle^{\otimes n}$ therefore means starting in a known, gapped ground state, and slowly moving to the ground state of $\ham(1)$, which encodes solutions of MIS by Lemma~\ref{lemma_endpoints}(b), realises the \emph{adiabatic} approach of \cite{farhi2000quantum}, to which we append a final closed-orbit argument promoting approximate to approximate controllability.
	
	The central question of this section is:
	
	\begin{center}
		\emph{Can the Pasqal equation \eqref{pasqal_equation_explicit}, driven by the controls $\Omega(t)$ and $\delta(t)$, steer \textbf{approximately} from $\phi_B=|0\rangle^{\otimes n}$ to a state encoding a Maximum Independent Set of $G$?}
	\end{center}
	
	The analysis is carried out entirely on the \emph{physical} Pasqal Hamiltonian \eqref{neutral_atoms_hamiltonian}, without passing through any abstract interpolating Hamiltonian. As we shall show, the answer involves three ingredients: a symmetry obstruction identifying the correct target (the MIS subspace $\Hmis$), an adiabatic anneal on $(\Omega,\delta)$ in the spirit of \cite{farhi2000quantum}, and a Lie-group compactness argument promoting approximate reachability to approximate reachability.
	
	\subsection{The QAOA optimal control problem}\label{subsec:QAOA_OCP}
	
	Following \cite{PhysRevLett.126.070505}, we formulate Quantum Annealing / Quantum Approximate Optimization Algorithm (QAOA) as a continuous-time optimal control problem. Given a time horizon $T>0$, the \textbf{QAOA optimal control problem} reads
	\begin{equation}\label{optimal_control_problem_qaoa}
		\min_{\Omega,\,\delta\in \mathscr{U}_{\mathrm{ad}}^T} \; J_T(\Omega,\delta) \;\coloneqq\; \langle \psi(T) | H_C | \psi(T) \rangle,
	\end{equation}
	subject to the \textbf{state equation}, driven by the Pasqal Hamiltonian \eqref{neutral_atoms_hamiltonian},
	\begin{equation}\label{QAOA_state_equation}
		\begin{cases}
			\displaystyle i\hbar \frac{d}{dt}\psi(t) = \left(\frac{\hbar\,\Omega(t)}{2}\sum_{k=1}^n X_k - \hbar\,\delta(t)\sum_{k=1}^n\hat n_k + \sum_{\substack{i,j=1\\i<j}}^{n}\frac{C_6}{R_{i,j}^6}\,\hat n_i\hat n_j\right)\psi(t), & t\in(0,T),\\[6pt]
			\psi(0) = \phi_B = |0\rangle^{\otimes n},
		\end{cases}
	\end{equation}
	with admissible controls
	\begin{equation}\label{admissible_controls_pasqal}
		\mathscr{U}_{\mathrm{ad}}^T \;\coloneqq\; \big\{(\Omega,\delta)\in L^\infty(0,T;\mathbb{R}^2) \;\big|\; \Omega(t)\in [0,\Omega_{\max}],\; \delta(t)\in [-\delta_{\max},\delta_{\max}],\;\mbox{a.e. }t\in(0,T)\big\}.
	\end{equation}
	
	\subsection{Symmetry obstruction to reaching a single $|\mathbf{x}^{*}\rangle$}\label{subsec:symmetry_obstruction}
	
	\begin{proposition}[Symmetry constraint on the reachable set]\label{prop_symmetry_constraint}
		Define the reachable set from $\phi_B$, with unrestricted time horizon, as
		\begin{equation}\label{reachable_set_def}
			\mathcal{R}(\phi_B) \;\coloneqq\; \bigcup_{T>0}\big\{\psi(T)\;:\;(\Omega,\delta)\in\mathscr{U}_{\mathrm{ad}}^T\big\}.
		\end{equation}
		Then every $\psi\in\mathcal{R}(\phi_B)$ is invariant under $\mathrm{Aut}(G,w)\coloneqq\{\sigma\in\mathfrak{S}_n : R_{\sigma(i),\sigma(j)}=R_{i,j}\;\forall\,i,j\}\footnote{$w=\left\{R_{i,j}\right\}_{\substack{i,j=1\\i<j}}^{n}$}$:
		\begin{equation}\label{reachable_invariance}
			P_\sigma\,\psi = \psi,\qquad \forall\,\sigma\in\mathrm{Aut}(G,w).
		\end{equation}
	\end{proposition}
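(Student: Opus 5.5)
The plan is to reduce the statement to two facts. First, the initial state $\phi_B=|0\rangle^{\otimes n}$ is itself $P_\sigma$-invariant. Second, the propagator commutes with $P_\sigma$, which is exactly what the proof of Proposition \ref{prop_pasqal_no_full_controllability} established. Throughout, $P_\sigma$ denotes the unitary on $\mathcal{H}=\bigotimes_{k=1}^n\mathbb{C}^2$ that permutes the tensor factors according to $\sigma$: on the computational basis, $P_\sigma|x_1\dots x_n\rangle=|x_{\sigma^{-1}(1)}\dots x_{\sigma^{-1}(n)}\rangle$. Since $\phi_B$ is a product of identical factors, its invariance is immediate: $P_\sigma|\mathbf{0}\rangle=|\mathbf{0}\rangle$ for every $\sigma\in\mathfrak{S}_n$.

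Next I will record the conjugation rules $P_\sigma X_kP_\sigma^{-1}=X_{\sigma(k)}$ and $P_\sigma\hat n_kP_\sigma^{-1}=\hat n_{\sigma(k)}$, which give $P_\sigma \hat n_i\hat n_jP_\sigma^{-1}=\hat n_{\sigma(i)}\hat n_{\sigma(j)}$. I then verify the commutations term by term.
\begin{itemize}
\item The control Hamiltonians $H_1$ and $H_2$ in \eqref{control_hamiltonians_pasqal} are sums over all $k$, so reindexing by $\sigma$ leaves them unchanged.
\item For the drift \eqref{drift_hamiltonian_pasqal}, reindexing the pair $(i,j)\mapsto(\sigma(i),\sigma(j))$ turns the coefficient $C_6/R_{i,j}^6$ into $C_6/R_{\sigma^{-1}(i),\sigma^{-1}(j)}^6$.
\item This coefficient equals $C_6/R_{i,j}^6$ exactly because $\sigma\in\mathrm{Aut}(G,w)$, and $\mathrm{Aut}(G,w)$ is a group, so $\sigma^{-1}$ also preserves distances.
\end{itemize}
A small care point: when $\sigma(i)>\sigma(j)$, the unordered pair must be used, which is harmless because $R_{i,j}=R_{j,i}$ and $\hat n_i\hat n_j=\hat n_j\hat n_i$. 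Hence, for every $t$, $P_\sigma H(t)P_\sigma^{-1}=H(t)$ for the Hamiltonian in \eqref{QAOA_state_equation}, with $H(t)$ built from arbitrary admissible $(\Omega,\delta)\in\mathscr{U}_{\mathrm{ad}}^T$.

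Then I fix $T>0$ and $(\Omega,\delta)\in\mathscr{U}^T_{\mathrm{ad}}$, and let $\psi$ solve \eqref{QAOA_state_equation}. I set $\tilde\psi(t)\defeq P_\sigma\psi(t)$. Because $P_\sigma$ is constant in time and commutes with $H(t)$, $\tilde\psi$ solves the same Schr\"{o}dinger equation, and its initial datum is $P_\sigma\phi_B=\phi_B$. The right-hand side is $L^\infty$ in time and linear in the state, so uniqueness of Carath\'eodory solutions (Remark \ref{remark_well_posedeness}, Sontag's theorem) gives $\tilde\psi\equiv\psi$ on $[0,T]$. In particular $P_\sigma\psi(T)=\psi(T)$. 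Taking the union over $T$ and admissible controls yields \eqref{reachable_invariance}.

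There is no real obstacle here. The only step needing attention is the index bookkeeping in the drift term, namely correctly using $\sigma^{-1}$ together with the symmetry of $R$. An equivalent route is to write $\psi(T)=U(T)\phi_B$ and invoke $[U(t),P_\sigma]=0$ from the proof of Proposition \ref{prop_pasqal_no_full_controllability} directly; I would present the vector version for self-containedness and mention the operator version as a remark.
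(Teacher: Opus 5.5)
Your proposal is correct and follows essentially the same route as the paper: invariance of $\phi_B=|0\rangle^{\otimes n}$ under every $P_\sigma$, commutation of $P_\sigma$ with the Pasqal Hamiltonian at every time (Proposition~\ref{prop_pasqal_no_full_controllability}), and uniqueness of solutions to conclude $P_\sigma\psi(T)=\psi(T)$. Your explicit conjugation rules and index bookkeeping simply spell out the commutation that the paper cites from Proposition~\ref{prop_pasqal_no_full_controllability}.
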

	\begin{proof}
		(i)~$\phi_B=|0\rangle^{\otimes n}$ is fully symmetric: $P_\sigma\phi_B=\phi_B$ for all $\sigma\in\mathfrak{S}_n$ (every site permutation fixes the all-zero string). (ii)~By Proposition~\ref{prop_pasqal_no_full_controllability}, each $P_\sigma$ with $\sigma\in\mathrm{Aut}(G,w)$ commutes with the Pasqal Hamiltonian \eqref{neutral_atoms_hamiltonian} at every time. By uniqueness of solutions, $P_\sigma\psi(T) = \psi(T)$.
	\end{proof}
	
	\begin{proposition}[Obstruction]\label{prop_obstruction}
		If $\mathbf{x}^{*}$ encodes an MIS and there exists $\sigma\in\mathrm{Aut}(G,w)$ with $P_\sigma|\mathbf{x}^{*}\rangle\neq|\mathbf{x}^{*}\rangle$, then $|\mathbf{x}^{*}\rangle\notin\mathcal{R}(\phi_B)$: the Pasqal equation \eqref{pasqal_equation_explicit} \emph{cannot} be driven from $\phi_B$ to $|\mathbf{x}^{*}\rangle$.
	\end{proposition}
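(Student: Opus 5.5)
The plan is a direct contrapositive of Proposition~\ref{prop_symmetry_constraint}. Suppose, for contradiction, that $|\mathbf{x}^{*}\rangle\in\mathcal{R}(\phi_B)$. Then there are a horizon $T>0$ and controls $(\Omega,\delta)\in\mathscr{U}_{\mathrm{ad}}^T$ whose solution $\psi$ of \eqref{QAOA_state_equation} satisfies $\psi(T)=|\mathbf{x}^{*}\rangle$. By \eqref{reachable_invariance}, every element of the reachable set is fixed by $P_\sigma$ for all $\sigma\in\mathrm{Aut}(G,w)$. In particular $P_\sigma|\mathbf{x}^{*}\rangle=|\mathbf{x}^{*}\rangle$ for the automorphism $\sigma$ given in the hypothesis, which contradicts the assumption $P_\sigma|\mathbf{x}^{*}\rangle\neq|\mathbf{x}^{*}\rangle$. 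Hence $|\mathbf{x}^{*}\rangle\notin\mathcal{R}(\phi_B)$.

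The only point needing care is the meaning of ``reaching'' in view of Definition~\ref{quantum_state_definition}. A quantum state is a ray, so one should also exclude $\psi(T)=e^{i\theta}|\mathbf{x}^{*}\rangle$ for some phase $\theta\in\mathbb{R}$. I will handle this by making the action of $P_\sigma$ explicit:
\begin{equation*}
P_\sigma|\mathbf{x}\rangle=|\mathbf{x}\circ\sigma^{-1}\rangle,
\end{equation*}
so $P_\sigma$ maps computational basis vectors to computational basis vectors. Consequently $P_\sigma|\mathbf{x}^{*}\rangle\neq|\mathbf{x}^{*}\rangle$ means exactly that $\mathbf{x}^{*}\circ\sigma^{-1}\neq\mathbf{x}^{*}$ as bitstrings. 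In that case the vectors $|\mathbf{x}^{*}\rangle$ and $P_\sigma|\mathbf{x}^{*}\rangle$ are orthogonal. Therefore $P_\sigma(e^{i\theta}|\mathbf{x}^{*}\rangle)=e^{i\theta}P_\sigma|\mathbf{x}^{*}\rangle$ is orthogonal to $e^{i\theta}|\mathbf{x}^{*}\rangle$, and in particular differs from it. This again contradicts \eqref{reachable_invariance}, which applies to every reachable vector, including a phase-shifted one. So the ray $\mathbb{C}|\mathbf{x}^{*}\rangle$ is not reached either.

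I expect no real obstacle, since the substantive work (commutation of $P_\sigma$ with $H_0,H_1,H_2$, and uniqueness of solutions) was already done in Propositions~\ref{prop_pasqal_no_full_controllability} and \ref{prop_symmetry_constraint}. The step to state carefully is the phase issue above, which the orthogonality observation settles. A natural strengthening I would add as a remark is quantitative: for any reachable $\psi$, since $P_\sigma\psi=\psi$ and $P_\sigma$ is unitary,
\begin{equation*}
|\langle \mathbf{x}^{*}|\psi\rangle|=|\langle P_\sigma\mathbf{x}^{*}|\psi\rangle|,
\end{equation*}
and the two vectors $|\mathbf{x}^{*}\rangle$, $P_\sigma|\mathbf{x}^{*}\rangle$ are orthonormal, so Bessel's inequality gives $|\langle \mathbf{x}^{*}|\psi\rangle|^2\le 1/2$. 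This shows that not even approximate reachability of $|\mathbf{x}^{*}\rangle$ is possible. It motivates taking the symmetric MIS subspace $\Hmis$ as the correct target.
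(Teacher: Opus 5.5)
Your argument is correct and follows the paper's own route: the paper gives no separate proof and presents the obstruction as an immediate consequence of Proposition~\ref{prop_symmetry_constraint}, since every reachable vector is $P_\sigma$-fixed while $|\mathbf{x}^{*}\rangle$ is not. Your phase discussion is valid but more than needed, because by linearity $P_\sigma(e^{i\theta}v)=e^{i\theta}v$ if and only if $P_\sigma v=v$; your Bessel bound $|\langle\mathbf{x}^{*}|\psi\rangle|^{2}\le 1/2$, which rules out even approximate reachability, is a correct strengthening that the paper does not state.
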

	
	\begin{example}\label{example_two_atoms}
		For $n=2$ atoms forming a single edge, the swap $\sigma=(1\;2)\in\mathrm{Aut}(G,w)$ maps $|10\rangle\mapsto |01\rangle$. Neither $|10\rangle$ nor $|01\rangle$ is reachable from $|00\rangle$. The symmetric superposition $(|10\rangle+|01\rangle)/\sqrt{2}$ is, however, reachable.
	\end{example}
	
	\subsection{The MIS subspace as relaxed target}\label{subsec:symmetry_adapted_target}
	
	By Proposition~\ref{prop_symmetry_constraint}, no single computational-basis state $|\mathbf{x}^{*}\rangle$ encoding a maximum independent set is reachable from $\phi_B$ as soon as some $\sigma\in\mathrm{Aut}(G,w)$ moves $\mathbf{x}^{*}$. We therefore \emph{relax the target} from an individual optimal bitstring to the whole subspace it spans. Let
	\begin{equation}\label{mis_set_def}
		\mathrm{MIS}(G)\coloneqq\{\mathbf{x}\in\{0,1\}^n : \mathbf{x}\mbox{ encodes an MIS of }G\},\qquad d\coloneqq|\mathrm{MIS}(G)|,
	\end{equation}
	and define the \textbf{MIS subspace}
	\begin{equation}\label{mis_subspace_def}
		\Hmis\;\defeq\;\spn_{\mathbb{C}}\bigl\{|\mathbf{x}\rangle:\mathbf{x}\in\mathrm{MIS}(G)\bigr\},\qquad \dim\Hmis=d .
	\end{equation}
	
	\smallskip\noindent\textbf{Relaxed goal.} Reach a state from which a computational-basis measurement returns a maximum independent set \emph{with certainty}; that is, steer the system to
	\begin{equation}\label{relaxed_goal}
		\psi(T)\in\Hmis\qquad\Longleftrightarrow\qquad \bigl\|P_{\Hmis}\,\psi(T)\bigr\|=1 ,
	\end{equation}
	where $P_{\Hmis}:\mathcal{H}\to\Hmis$ denotes the orthogonal projection onto $\Hmis$. Indeed, since $\{|\mathbf{y}\rangle:\mathbf{y}\in\mathrm{MIS}(G)\}$ is an orthonormal basis of $\Hmis$, the Born rule gives, for \emph{any} normalised $\psi$,
	\begin{equation}\label{measurement_MIS_certainty}
		\mathbb{P}\big[\mathbf{x}\mbox{ encodes an MIS of }G\big] = \sum_{\mathbf{y}\in\mathrm{MIS}(G)}\big|\langle\mathbf{y}|\psi\rangle\big|^2 = \bigl\|P_{\Hmis}\psi\bigr\|^2 ,
	\end{equation}
	so the success probability equals $1$ precisely when $\psi\in\Hmis$. The relaxed target is thus the \emph{affine sphere} $S(\Hmis)=\{\psi\in\Hmis:\|\psi\|=1\}$, an entire manifold rather than a single point, and it is compatible with the symmetry constraint of Proposition~\ref{prop_symmetry_constraint}, because $\mathrm{Aut}(G,w)$ permutes $\mathrm{MIS}(G)$ and hence preserves $\Hmis$.
	
	A distinguished element of $S(\Hmis)$ is the \textbf{symmetry-adapted MIS state}
	\begin{equation}\label{symmetry_adapted_MIS}
		\psi_{\mathrm{MIS}} \;\coloneqq\; \frac{1}{\sqrt{d}}\sum_{\mathbf{x}\in\mathrm{MIS}(G)} |\mathbf{x}\rangle\ \in\ \Hmis\cap\mathcal{H}^{\mathrm{Aut}(G,w)},
	\end{equation}
	the uniform superposition over all maximum independent sets, which is $P_\sigma$-invariant for every $\sigma\in\mathrm{Aut}(G,w)$. We keep $\psi_{\mathrm{MIS}}$ only as a convenient representative \textup{(}for instance in the unique-MIS Corollary~\ref{cor_unique_MIS}\textup{)}: the protocol below is \emph{not} required to reach this particular vector, only to reach the subspace $\Hmis$.
	
	\subsection{The dynamical Lie algebra}\label{subsec:Lie_group}
	
	\begin{definition}[Dynamical Lie algebra]\label{def_dynamical_Lie_pasqal}
		The dynamical Lie algebra of the Pasqal system \eqref{pasqal_equation_explicit} is
		\begin{equation}\label{Lie_algebra_pasqal}
			\mathfrak{g} \;\coloneqq\; \mathrm{Lie}\big(\{-iH_0,\,-iH_1,\,-iH_2\}\big) \;\subseteq\; \mathfrak{su}(2^n),
		\end{equation}
		with $H_0$, $H_1$, $H_2$ as in \eqref{drift_hamiltonian_pasqal}--\eqref{control_hamiltonians_pasqal}, and $\mathcal{G}\coloneqq\langle\exp(\mathfrak{g})\rangle\subseteq SU(2^n)$ the associated connected Lie subgroup.
	\end{definition}
	
	\begin{lemma}[Global rotations in $\mathfrak{g}$]\label{lemma_global_rotations}
		The Lie algebra $\mathfrak{g}$ contains the global Pauli generators
		\begin{equation}\label{global_generators}
			-iH_1 = -\frac{i}{2}\sum_{k=1}^n X_k,\quad -iH_2 = i\sum_{k=1}^n \hat n_k,\quad \mbox{and}\quad [-iH_1,-iH_2] = \frac{i}{2}\sum_{k=1}^n Y_k.
		\end{equation}
		In particular, for every $\theta\in\mathbb{R}$, the global $Y$-rotation $\exp\!\big(\tfrac{i\theta}{2}\sum_{k=1}^n Y_k\big)\in\mathcal{G}$.
	\end{lemma}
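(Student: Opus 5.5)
The plan is a direct computation in the Lie algebra $\mathfrak{g}$ of Definition~\ref{def_dynamical_Lie_pasqal}; no controllability theorem is needed.

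\textbf{First two generators.} The elements $-iH_1$ and $-iH_2$ lie in $\mathfrak{g}$ by definition, since they are among its generators. Substituting \eqref{control_hamiltonians_pasqal} gives $-iH_1=-\frac{i}{2}\sum_k X_k$ and $-iH_2=i\sum_k\hat n_k$.

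\textbf{The bracket.} Since $\mathfrak{g}$ is closed under brackets, $[-iH_1,-iH_2]\in\mathfrak{g}$, and it remains to evaluate it. By bilinearity,
\begin{equation*}
[-iH_1,-iH_2]=\Bigl(-\tfrac{i}{2}\Bigr)(i)\sum_{k,l}[X_k,\hat n_l]=\tfrac12\sum_{k,l}[X_k,\hat n_l].
\end{equation*}
For $k\neq l$ the two operators act on different tensor factors, so they commute and only the diagonal terms $k=l$ survive.

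On a single qubit I will use $\hat n=(I-Z)/2$, which gives $[X,\hat n]=-\frac12[X,Z]$. The Pauli relations give $XZ=-iY$ and $ZX=iY$, so $[X,Z]=-2iY$ and hence $[X,\hat n]=iY$. This identity lifts to qubit $k$ by tensoring with identities, and summing over $k$ yields
\begin{equation*}
[-iH_1,-iH_2]=\tfrac{i}{2}\sum_{k=1}^n Y_k .
\end{equation*}
This is exactly \eqref{global_generators}.

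\textbf{The rotation.} Because $\mathfrak{g}$ is a real vector space, $\theta\cdot\frac{i}{2}\sum_k Y_k\in\mathfrak{g}$ for every $\theta\in\mathbb{R}$. Its exponential therefore lies in $\exp(\mathfrak{g})\subseteq\mathcal{G}$. As a side remark, the $Y_k$ commute with one another, so this exponential factorizes as $\bigotimes_k \exp(\frac{i\theta}{2}Y)$, a genuine global rotation, although this is not needed for membership in $\mathcal{G}$.

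The only step requiring care is the sign bookkeeping in $[X,\hat n]$ together with the prefactor $(-i/2)(i)=1/2$. Everything else is immediate from the definitions.
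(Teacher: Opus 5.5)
Your proposal is correct and follows essentially the same route as the paper: a direct bracket computation using $\hat n_k=(I-Z_k)/2$, the single-qubit relation $[X,\hat n]=iY$ (the paper writes it as $[\hat n_k,X_k]=-iY_k$), and the vanishing of the cross terms $k\neq l$. The only differences are cosmetic: the paper first computes $[H_1,H_2]=-\frac{i}{2}\sum_k Y_k$ and then uses $[-iH_1,-iH_2]=-[H_1,H_2]$, whereas you carry the prefactor $(-\frac{i}{2})(i)=\frac12$ directly, and you also spell out the exponential step, which the paper leaves implicit.
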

	\begin{proof}
		A direct computation using $[\hat n_k,X_k]=[(I-Z_k)/2,X_k]=-[Z_k,X_k]/2 = -iY_k$ gives
		\begin{equation}
			[H_1,H_2] = \Big[\tfrac{1}{2}\textstyle\sum_{k=1}^{n} X_k,\,-\textstyle\sum_{l=1}^{n}\hat n_l\Big] = -\tfrac{1}{2}\textstyle\sum_{k=1}^{n} [X_k,\hat n_k] = -\tfrac{i}{2}\textstyle\sum_{k=1}^{n} Y_k.
		\end{equation}
		Hence $[-iH_1,-iH_2]=-[H_1,H_2]=(i/2)\sum_{k=1}^{n} Y_k\in\mathfrak{g}$.
	\end{proof}
	
	\begin{proposition}[The QAOA state $|+\rangle^{\otimes n}$ lies in the orbit of $\phi_B$]\label{prop_same_orbit}
		The standard QAOA initial state $|+\rangle^{\otimes n}$ belongs to the orbit of $\phi_B=|0\rangle^{\otimes n}$:
		\begin{equation}\label{zero_in_orbit}
			|+\rangle^{\otimes n}\in\mathcal{G}\cdot\phi_B .
		\end{equation}
		Consequently $\phi_B=|0\rangle^{\otimes n}$ and $|+\rangle^{\otimes n}$ generate the \emph{same} reachable set, so the choice between the annealing initialisation $|0\rangle^{\otimes n}$ and the QAOA initialisation $|+\rangle^{\otimes n}$ is immaterial for controllability.
	\end{proposition}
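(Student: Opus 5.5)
The plan is to use the global $Y$-rotation supplied by Lemma~\ref{lemma_global_rotations}. Since $\frac{i}{2}\sum_{k=1}^n Y_k\in\mathfrak{g}$, the one-parameter subgroup $\theta\mapsto \exp\!\big(\tfrac{i\theta}{2}\sum_{k=1}^n Y_k\big)$ lies in $\mathcal{G}$. The terms $Y_k$ act on different tensor factors and commute pairwise, so the exponential factorises:
\begin{equation*}
\exp\!\Big(\tfrac{i\theta}{2}\sum_{k=1}^n Y_k\Big)=\bigotimes_{k=1}^n \exp\!\big(\tfrac{i\theta}{2}Y\big).
\end{equation*}
The problem therefore reduces to a single-qubit computation. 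Because $Y^2=I$, we have $\exp(\tfrac{i\theta}{2}Y)=\cos(\theta/2)I+i\sin(\theta/2)Y$. With $Y|0\rangle=i|1\rangle$ this gives
\begin{equation*}
\exp(\tfrac{i\theta}{2}Y)|0\rangle=\cos(\theta/2)|0\rangle-\sin(\theta/2)|1\rangle.
\end{equation*}
Choosing $\theta=-\pi/2$ yields $\tfrac{1}{\sqrt2}(|0\rangle+|1\rangle)=|+\rangle$. Tensoring over $k$ then gives $|+\rangle^{\otimes n}\in\mathcal{G}\cdot\phi_B$. The only thing to watch is the sign convention; if it comes out as $|-\rangle$, replace $\theta$ by $-\theta$, which is allowed since the whole one-parameter subgroup lies in $\mathcal{G}$.

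For the consequence about reachable sets, note that the orbits $\mathcal{G}\cdot\phi_B$ and $\mathcal{G}\cdot|+\rangle^{\otimes n}$ coincide. Indeed, if $|+\rangle^{\otimes n}=g_0\phi_B$ with $g_0\in\mathcal{G}$, then $\mathcal{G}\cdot|+\rangle^{\otimes n}=\mathcal{G}g_0\phi_B=\mathcal{G}\cdot\phi_B$, since $\mathcal{G}$ is a group. To identify these orbits with the reachable sets, I will invoke the standard fact, the same one underlying \cite[Theorem 3.2.1]{d2021introduction}, that for a right-invariant bilinear system on the compact group $SU(2^n)$ the closure of the reachable set from the identity, with unrestricted time, is the connected Lie group $\mathcal{G}$ generated by $\mathfrak{g}$. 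Compactness makes the drift recurrent, so $e^{-iH_0 t}$ can be approximately reversed.

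This identification is the main obstacle. The admissible set \eqref{admissible_controls_pasqal} imposes the sign constraint $\Omega\ge0$, so one must check that the generated group is unaffected. The first approach I would try is to note that the convex hull of the allowed Hamiltonian values, together with the recurrence of the compact-group flows, still generates $\mathfrak{g}$ as a Lie algebra. The reason is that $H_1$ is obtained by differencing the values at $\Omega=\Omega_{\max}$ and $\Omega=0$, and $H_2$ likewise by varying $\delta$. Exact reachability of all of $\mathcal{G}$ in finite time on a compact group then follows from the standard Jurdjevic--Sussmann argument. With this in hand, the reachable set from either initial state is $\mathcal{G}$ applied to that state, and the two coincide by the previous paragraph.
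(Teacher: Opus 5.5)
Your proof of the orbit statement is the paper's proof. The paper also takes the global $Y$-rotation supplied by Lemma~\ref{lemma_global_rotations} and evaluates it at $\exp\bigl(-\tfrac{i\pi}{4}\sum_{k}Y_k\bigr)$, which is your $\theta=-\pi/2$. It factorises this over the qubits to get $|+\rangle^{\otimes n}=g_0\phi_B$ with $g_0\in\mathcal{G}$, and deduces $\mathcal{G}\cdot|+\rangle^{\otimes n}=\mathcal{G}\cdot\phi_B$ from $\mathcal{G}g_0=\mathcal{G}$. Your sign computation is correct.

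The difference lies in the ``consequently'' clause. The paper stops at equality of orbits and tacitly identifies orbits with reachable sets. You rightly flag this identification as the real issue and supply an argument. The vector fields $-i(H_0+u_1H_1+u_2H_2)$, with $(u_1,u_2)$ ranging over the control box, still generate $\mathfrak{g}$, because the box contains $(0,0)$ and affinely spans $\mathbb{R}^2$. So $\Omega\ge0$ is harmless, and Jurdjevic--Sussmann applies. This is more than the paper provides.

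One step needs a hypothesis you did not state. Jurdjevic--Sussmann yields ``reachable set $=\mathcal{G}$'' only when $\mathcal{G}$ is itself compact, i.e.\ closed in the ambient group. The paper defines $\mathcal{G}$ only as the connected Lie subgroup generated by $\mathfrak{g}$, and such a subgroup need not be closed: its central part can be an irrational winding. Without closedness the conclusion can fail; for instance, a pure drift along a dense line in a torus reaches only a half-line.

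Note also that the ambient group is $U(2^n)$, not $SU(2^n)$, because $-iH_0$ and $-iH_2$ are not traceless. In fact $iI\in\mathfrak{g}$, since $[-iH_1,\tfrac{i}{2}\sum_kY_k]=\tfrac{i}{2}\sum_kZ_k$ and $-iH_2=\tfrac{in}{2}I-\tfrac{i}{2}\sum_kZ_k$.

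What your argument proves unconditionally concerns the set $\mathcal{R}_I$ of unitaries reachable from $I$. Its closure is a closed subsemigroup, hence a subgroup, of the compact group $U(2^n)$. It contains $\exp(tA_u)$ for $t\ge0$, and therefore, being a group, for all $t\in\mathbb{R}$. Hence $\overline{\mathcal{R}_I}=\overline{\mathcal{G}}$, and so $\overline{\mathcal{R}(\phi_B)}=\overline{\mathcal{G}}\,\phi_B=\overline{\mathcal{R}(|+\rangle^{\otimes n})}$.

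You should either add the assumption that $\mathcal{G}$ is closed, or state the consequence for closures of reachable sets. The closure version suffices for the approximate-controllability use the paper makes of it later: Remark~\ref{remark_qaoa_init} only realises the transfer approximately.
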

	\begin{proof}
		The single-qubit $Y$-rotation $\exp(-iY\theta/2)$ is the rotation matrix $\bigl[\begin{smallmatrix}\cos\theta/2&-\sin\theta/2\\\sin\theta/2&\cos\theta/2\end{smallmatrix}\bigr]$; at $\theta=\pi/2$ it maps $(1,0)^\top=|0\rangle$ to $(1,1)^\top/\sqrt{2}=|+\rangle$. Hence the global $Y$-rotation with $\theta=\pi/2$ gives
		\begin{equation}\label{global_Y_rotation}
			\exp\!\Big(-\frac{i\pi}{4}\sum_{k=1}^n Y_k\Big)\,\phi_B \;=\; \Big(\exp\!\big(-\tfrac{i\pi}{4}Y\big)\Big)^{\otimes n}|0\rangle^{\otimes n} \;=\; |+\rangle^{\otimes n}.
		\end{equation}
		By Lemma~\ref{lemma_global_rotations}, the unitary on the left belongs to $\mathcal{G}$, so $|+\rangle^{\otimes n}\in\mathcal{G}\cdot\phi_B$; since $\mathcal{G}$ is a group, the two orbits coincide, $\mathcal{G}\cdot|+\rangle^{\otimes n}=\mathcal{G}\cdot\phi_B$.
	\end{proof}
	
	\begin{remark}[Physical realization of the global $Y$-rotation]\label{remark_Y_realization}
		The global $Y$-rotation $\exp\!\big(\tfrac{i\theta}{2}\sum_{k=1}^{n} Y_k\big)$ is not directly generated by a single control pulse, since it corresponds to a Lie bracket $[-iH_1,-iH_2]$, not to $H_1$ or $H_2$ alone. Physically, it is synthesized by alternating short, intense Rabi and detuning pulses. For small $\varepsilon>0$, the Baker--Campbell--Hausdorff formula \cite[formula (E.6) at page 364]{d2021introduction} gives
		\begin{equation}\label{BCH_approximation}
			e^{-iH_1\varepsilon}\,e^{-iH_2\varepsilon}\,e^{iH_1\varepsilon}\,e^{iH_2\varepsilon} = e^{-[H_1,H_2]\varepsilon^2 + O(\varepsilon^3)},
		\end{equation}
		so that $O(1/\varepsilon^2)$ repetitions of this commutator sequence approximate $\exp\!\big(-[H_1,H_2]\tau\big)$ for any desired $\tau$. The drift $H_0$ contributes corrections of order $\varepsilon$ in each factor, which are controlled by choosing $\varepsilon$ small enough (i.e., by using sufficiently strong and short pulses with $\Omega_{\max}\gg 1/\varepsilon$ and $\delta_{\max}\gg 1/\varepsilon$).
	\end{remark}
	
	\subsection{Spectral gap of the Pasqal Hamiltonian along a control path}\label{subsec:spectral_gap_Pasqal}
	
	We define the spectral gap directly for the Pasqal Hamiltonian \eqref{neutral_atoms_hamiltonian}, parametrized by the controls $(\Omega,\delta)$.
	
	\begin{definition}[Control path and spectral gap]\label{def_control_path}
		An \textbf{adiabatic control path} is a smooth curve $\gamma:[0,1]\to [0,\Omega_{\max}]\times[-\delta_{\max},\delta_{\max}]$, written $\gamma(s)=(\Omega(s),\delta(s))$, connecting
		\begin{equation}\label{control_path_endpoints}
			\gamma(0) = (0,\,-\delta_0)\qquad\mbox{and}\qquad \gamma(1) = (0,\,+\delta_0),
		\end{equation}
		for some $\delta_0>0$. The \textbf{instantaneous Pasqal Hamiltonian} along $\gamma$ is
		\begin{equation}\label{instantaneous_Pasqal}
			H_{\gamma}(s) \;\coloneqq\; \frac{\hbar\,\Omega(s)}{2}\sum_{k=1}^n X_k \;-\; \hbar\,\delta(s)\sum_{k=1}^n \hat n_k \;+\; H_0.
		\end{equation}
		Its eigenvalues are denoted $\lambda_0(s)\leq\lambda_1(s)\leq\cdots$, and the \textbf{spectral gap along $\gamma$} is
		\begin{equation}\label{spectral_gap_Pasqal}
			\Delta_\gamma(s) \;\coloneqq\; \lambda_1(s) - \lambda_0(s).
		\end{equation}
	\end{definition}
	
	A natural choice of adiabatic control path is the \emph{bell-shaped} schedule:
	\begin{equation}\label{bell_schedule}
		\Omega(s) = \Omega_{\max} 4s(1-s),\qquad \delta(s) = \delta_0(2s-1),\qquad s\in[0,1].
	\end{equation}
	At $s=0$: $(\Omega,\delta)=(0,-\delta_0)$; at $s=1/2$: $(\Omega,\delta)=(\Omega_{\max},0)$; at $s=1$: $(\Omega,\delta)=(0,+\delta_0)$.
	
	\medskip
	\noindent\textbf{Notation for the endpoint analysis.}
	Throughout, $\ham$ denotes the instantaneous Pasqal Hamiltonian $H_{\gamma}(s)$
	along the control path $\gamma$, defined in \eqref{instantaneous_Pasqal}; we write $\lvert\bx\rvert\defeq\sum_{k=1}^{n}x_{k}$
	for the Hamming weight of $\bx\in\{0,1\}^{n}$ (the cardinality of the encoded
	subset), with $\bze$ the all-zero string, $\lvert\bze\rangle=\lvert0\rangle^{\otimes n}$.
	Under the physical convention $C_{6}>0$ (repulsive Rydberg interaction), the
	couplings $J_{ij}\defeq C_{6}/R_{i,j}^{6}>0$ make $H_{0}$ diagonal in the
	computational basis, with
	\begin{equation}
		\langle\bx\rvert H_{0}\lvert\bx\rangle=\sum_{\substack{i,j=1 \\ i<j}}^{n}J_{ij}x_{i}x_{j}\ge0
	\end{equation}
	by \eqref{drift_on_comp_basis}; we set the \emph{blockade scale}
	\begin{equation}\label{loc:blo}
		\Jblo\defeq\frac{C_{6}}{R_{b}^{6}}>0 .
	\end{equation}
	At the endpoints \eqref{control_path_endpoints}, the operators
	\begin{equation}\label{loc:endpoints}
		\ham(0)=H_{0}+\hbar\delta_{0}\sum_{k=1}^{n}\hat n_{k},
		\qquad
		\ham(1)=H_{0}-\hbar\delta_{0}\sum_{k=1}^{n}\hat n_{k}
	\end{equation}
	are diagonal in the computational basis (sums of diagonal operators), with
	eigenpairs $\bigl(E_{a}(\bx),\lvert\bx\rangle\bigr)$ and
	$\bigl(E_{b}(\bx),\lvert\bx\rangle\bigr)$, where
	\begin{equation}\label{loc:Ea}
		E_{a}(\bx)\defeq\langle\bx\rvert H_{0}\lvert\bx\rangle+\hbar\delta_{0}\,\lvert\bx\rvert,
	\end{equation}
	\begin{equation}\label{loc:Eb}
		E_{b}(\bx)\defeq\langle\bx\rvert H_{0}\lvert\bx\rangle-\hbar\delta_{0}\,\lvert\bx\rvert .
	\end{equation}
	For $\bx$ encoding an independent set, \eqref{drift_on_indep_set} gives
	$\langle\bx\rvert H_{0}\lvert\bx\rangle=\tau(\bx)$, where the \emph{tail energy} is
	\begin{equation}\label{loc:tail}
		\tau(\bx)\defeq\!\!\sum_{\substack{\{i,j\}\notin E\\ i<j}}\!\! J_{ij}\,x_{i}x_{j}\ =\!\!\sum_{\substack{\{i,j\}\notin E\\ i<j}}\!\! \frac{C_{6}}{R_{i,j}^{6}}\,x_{i}x_{j}\ \ge 0,
	\end{equation}
	while \eqref{drift_penalty} gives $\langle\bx\rvert H_{0}\lvert\bx\rangle\ge\Jblo$
	for any $\bx$ violating independence. We abbreviate the worst-case tail over
	independent sets by
	\begin{equation}\label{loc:taumax}
		\tau^{\max}\defeq\max\bigl\{\tau(\bx)\ :\ \bx\in\{0,1\}^{n}\ \text{independent}\bigr\}\ \ge 0 .
	\end{equation}
	Finally, $\His\defeq\spn\{\lvert\bx\rangle:\bx\text{ encodes an independent set}\}$
	is the independent-set subspace of Corollary~\ref{cor_effective_hamiltonian}.
	
	Define the
	\textbf{energy-scale window} as the conditions
	\begin{equation}\label{energy_window}
		\tag{W}
		\tau^{\max}\ <\ \hbar\delta_{0}
		\qquad\text{and}\qquad
		\hbar\delta_{0}\,n+\tau^{\max}\ <\ \Jblo=\frac{C_{6}}{R_{b}^{6}} .
	\end{equation}
	In Lemma \ref{lem:window}, we shall see under which conditions \eqref{energy_window} is nonempty and, in Corollary \ref{cor:window_delta0}, we will see \eqref{energy_window} as a two-sided bound on the detuning amplitude $\delta_0$.

	\begin{lemma}[Combinatorial ground states of $\ham(1)$]\label{lem:window}
		Assume $C_{6}>0$, $\delta_{0}>0$, and let \eqref{energy_window} hold. Then:
		\begin{enumerate}
			\item[\textup{(i)}] every global minimizer of $E_{b}$ encodes an independent set of $G$;
			\item[\textup{(ii)}] every global minimizer of $E_{b}$ encodes a \emph{maximum} independent set, i.e.
			\begin{equation}\label{loc:argmin-in-MIS}
				\arg\min_{\bx\in\{0,1\}^{n}}E_{b}(\bx)\ \subseteq\ \MIS(G);
			\end{equation}
			\item[\textup{(iii)}] conversely $\by\in\MIS(G)$ is a global minimizer of $E_{b}$ iff
			$\tau(\by)=\min_{\by'\in\MIS(G)}\tau(\by')$. In the idealised (hard) blockade
			$\tau\equiv0$, equality holds in \eqref{loc:argmin-in-MIS}:
			$\arg\min_{\bx}E_{b}(\bx)=\MIS(G)$.
		\end{enumerate}
		Moreover, the window \eqref{energy_window} is nonempty whenever
		$(n+1)\,\tau^{\max}<C_{6}/R_{b}^{6}$; in the idealised blockade it reduces to the
		manifestly nonempty interval $0<\hbar\delta_{0}<C_{6}/(n R_{b}^{6})$.
	\end{lemma}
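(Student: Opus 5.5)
The plan is to compare $E_b$ at a candidate minimizer with its value at a reference independent set, using the two inequalities in \eqref{energy_window} in turn. Write $\alpha=\alpha(G)$. Every string $\bx$ satisfies $E_b(\bx)\ge \langle\bx|H_0|\bx\rangle-\hbar\delta_0|\bx|$.

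For any $\by\in\MIS(G)$, \eqref{drift_on_indep_set} gives $E_b(\by)=\tau(\by)-\hbar\delta_0\alpha$. In particular the global minimum satisfies $\min E_b\le \tau^{\max}-\hbar\delta_0\alpha$; it is also at most $E_b(\bze)=0$.

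\emph{Step (i).} Let $\bx$ violate independence on some edge. Then \eqref{drift_penalty} gives $\langle\bx|H_0|\bx\rangle\ge\Jblo$, and $|\bx|\le n$, so
\[
E_b(\bx)\ge \Jblo-\hbar\delta_0 n .
\]
By the second condition in \eqref{energy_window}, the right-hand side exceeds $\tau^{\max}$. Moreover $\tau^{\max}\ge \tau(\by)\ge \tau(\by)-\hbar\delta_0\alpha=E_b(\by)$ for any $\by\in\MIS(G)$. Hence $E_b(\bx)>E_b(\by)$, so $\bx$ is not a minimizer.

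\emph{Step (ii).} Let $\bx$ be an independent set with $|\bx|\le\alpha-1$. Since $\tau\ge0$,
\[
E_b(\bx)=\tau(\bx)-\hbar\delta_0|\bx|\ge -\hbar\delta_0(\alpha-1).
\]
For any $\by\in\MIS(G)$,
\[
E_b(\by)\le \tau^{\max}-\hbar\delta_0\alpha< \hbar\delta_0-\hbar\delta_0\alpha=-\hbar\delta_0(\alpha-1),
\]
where the strict inequality is the first condition in \eqref{energy_window}. Hence $E_b(\by)<E_b(\bx)$. Combined with step (i), every global minimizer is independent and has cardinality $\alpha$, which is \eqref{loc:argmin-in-MIS}.

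\emph{Step (iii).} By (ii), the minimum of $E_b$ is attained on $\MIS(G)$. On that set $E_b=\tau-\hbar\delta_0\alpha$ with $\alpha$ constant, so minimizing $E_b$ there is the same as minimizing $\tau$. This gives the stated characterization. In the hard blockade $\tau\equiv0$, so $E_b$ is constant on $\MIS(G)$ and the inclusion becomes an equality.

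\emph{Nonemptiness of the window.} Conditions \eqref{energy_window} say
\[
\tau^{\max}<\hbar\delta_0<\frac{\Jblo-\tau^{\max}}{n}.
\]
This interval is nonempty iff $n\tau^{\max}<\Jblo-\tau^{\max}$, i.e.\ iff $(n+1)\tau^{\max}<\Jblo$. When $\tau\equiv0$ the interval reduces to $0<\hbar\delta_0<C_6/(nR_b^6)$, which is always nonempty.

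The only mildly delicate point is step (i): the bound must hold uniformly over all violating strings, including ones with large Hamming weight. The crude bound $|\bx|\le n$ suffices, and it is the reason the factor $n$ appears in \eqref{energy_window}.
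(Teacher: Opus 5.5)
Your proof is correct and follows essentially the same route as the paper: compare each violating string and each non-maximum independent set against a fixed maximum independent set using the two inequalities of the window in turn, then reduce (iii) to minimizing $\tau$ over $\MIS(G)$, where the cardinality $\alpha(G)$ is constant. The differences are cosmetic: in step (i) you discard the nonnegative term $\hbar\delta_0\alpha(G)$, in step (ii) you compare against the threshold $-\hbar\delta_0(\alpha(G)-1)$, and you state more explicitly than the paper that (ii) makes the global minimum of $E_b$ coincide with its minimum over $\MIS(G)$.
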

	
	\begin{proof}
		Fix a maximum independent set encoded by $\by_{*}$, so $\lvert\by_{*}\rvert=\alpha(G)$
		and, by \eqref{drift_on_indep_set}--\eqref{loc:taumax},
		\begin{equation}\label{loc:ystar}
			E_{b}(\by_{*})=\tau(\by_{*})-\hbar\delta_{0}\,\alpha(G)\ \le\ \tau^{\max}-\hbar\delta_{0}\,\alpha(G).
		\end{equation}
		
		\smallskip
		\emph{Step 1 (independent sets beat violators - proof of (i)).}
		Let $\bx$ violate independence on some edge. By \eqref{drift_penalty} and
		$\lvert\bx\rvert\le n$,
		\[
		E_{b}(\bx)=\langle\bx\rvert H_{0}\lvert\bx\rangle-\hbar\delta_{0}\lvert\bx\rvert
		\ \ge\ \Jblo-\hbar\delta_{0}\,n .
		\]
		Subtracting \eqref{loc:ystar},
		\begin{equation}\label{eq_E_b_estimate}
			E_{b}(\bx)-E_{b}(\by_{*})
			\ \ge\ \bigl(\Jblo-\hbar\delta_{0}\,n-\tau^{\max}\bigr)+\hbar\delta_{0}\,\alpha(G)
			\ >\ 0 ,
		\end{equation}
		because the bracket is positive by the second inequality in \eqref{energy_window}
		and $\hbar\delta_{0}\,\alpha(G)\ge0$. Hence no violator is a global minimizer:
		every minimizer is an independent set.
		
		\smallskip
		\emph{Step 2 (maximum cardinality wins - proof of (ii)).}
		Let $\bz$ encode an independent set that is \emph{not} maximum,
		$\lvert\bz\rvert\le\alpha(G)-1$. Using $\tau(\bz)\ge0$ and \eqref{loc:ystar},
		\begin{equation}\label{no_max_indep_set}
			E_{b}(\bz)-E_{b}(\by_{*})
			=\bigl[\tau(\bz)-\tau(\by_{*})\bigr]-\hbar\delta_{0}\bigl(\lvert\bz\rvert-\alpha(G)\bigr)
			\ \ge\ -\tau^{\max}+\hbar\delta_{0}\,\bigl(\alpha(G)-\lvert\bz\rvert\bigr)
			\ \ge\ \hbar\delta_{0}-\tau^{\max}\ >\ 0 ,
		\end{equation}
		the last inequality being the first condition in \eqref{energy_window}. Thus no
		non-maximum independent set is a global minimizer. Combining with Step~1 proves
		\eqref{loc:argmin-in-MIS}.
		
		\smallskip
		\emph{Step 3 (degeneracy structure - proof of (iii)).}
		For $\by,\by'\in\MIS(G)$ one has $\lvert\by\rvert=\lvert\by'\rvert=\alpha(G)$, so
		$E_{b}(\by)-E_{b}(\by')=\tau(\by)-\tau(\by')$. Hence $E_{b}$ restricted to
		$\MIS(G)$ is minimised exactly on the maximum independent sets of least tail
		energy. If $\tau\equiv0$ (idealised blockade), all of $\MIS(G)$ ties at
		$E_{b}=-\hbar\delta_{0}\,\alpha(G)$ and, by Steps~1--2, strictly undercuts every
		other bitstring; thus $\arg\min E_{b}=\MIS(G)$.
		
		\smallskip
		\emph{Nonemptiness of \eqref{energy_window}.} A common value $\hbar\delta_{0}$
		satisfying both inequalities exists iff
		$\tau^{\max}<(\Jblo-\tau^{\max})/n$, i.e.\ $(n+1)\tau^{\max}<\Jblo$. For
		$\tau^{\max}=0$ \eqref{energy_window} is $0<\hbar\delta_{0}<\Jblo/n$.
	\end{proof}
	
	\begin{corollary}[The window as a detuning interval]\label{cor:window_delta0}
		Fix the atom layout $\{r_i\}_{i=1}^{n}$ (hence the graph $G$, the size $n$, all
		pairwise distances $R_{i,j}$, the tail $\tau^{\max}$, and the threshold radius
		$R_{b}$) and the coefficient $C_{6}>0$. Then the energy-scale window
		\eqref{energy_window} is equivalent to the two-sided bound on the detuning
		amplitude
		\begin{equation}\label{window_delta0_interval}
			\frac{\tau^{\max}}{\hbar}\ <\ \delta_{0}\ <\ \frac{1}{\hbar n}\Bigl(\frac{C_{6}}{R_{b}^{6}}-\tau^{\max}\Bigr),
		\end{equation}
		a nonempty interval if and only if $(n+1)\,\tau^{\max}<C_{6}/R_{b}^{6}$; in the
		idealised blockade $\tau^{\max}=0$ it collapses to $0<\delta_{0}<C_{6}/(\hbar n R_{b}^{6})$.
	\end{corollary}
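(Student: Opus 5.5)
The plan is to rewrite the two inequalities of the window \eqref{energy_window} as bounds on $\delta_0$ alone. Everything else in them is fixed by the data of the statement. Once the layout $\{r_i\}$ and $C_6>0$ are fixed, the quantities $n$, $R_b$, $\Jblo=C_6/R_b^6$ and $\tau^{\max}$ (defined in \eqref{loc:taumax} as a maximum over the finitely many independent bitstrings) are constants. Also $\hbar>0$. So both conditions in \eqref{energy_window} are affine in the single unknown $\delta_0$.

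The steps, in order, are as follows.

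First, divide the inequality $\tau^{\max}<\hbar\delta_0$ by $\hbar>0$. This gives the lower bound $\tau^{\max}/\hbar<\delta_0$.

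Second, rearrange $\hbar\delta_0 n+\tau^{\max}<\Jblo$ as $\hbar n\,\delta_0<\Jblo-\tau^{\max}$. Dividing by $\hbar n>0$ (using $n\ge1$) gives the upper bound in \eqref{window_delta0_interval}. Each manipulation is reversible, so the conjunction of the two conditions in \eqref{energy_window} is exactly the two-sided bound \eqref{window_delta0_interval}. This proves the claimed equivalence.

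Third, for nonemptiness, note that an open interval $(a,b)$ in $\mathbb{R}$ is nonempty iff $a<b$. Here this reads
\[
\frac{\tau^{\max}}{\hbar}<\frac{1}{\hbar n}\Bigl(\frac{C_6}{R_b^6}-\tau^{\max}\Bigr).
\]
Multiplying by $\hbar n$ turns this into $n\tau^{\max}<C_6/R_b^6-\tau^{\max}$, that is, $(n+1)\tau^{\max}<C_6/R_b^6$. This matches the last statement of Lemma~\ref{lem:window}, which can be cited as a cross-check.

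Finally, setting $\tau^{\max}=0$ gives the idealised interval $0<\delta_0<C_6/(\hbar n R_b^6)$. It is nonempty because $C_6>0$.

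There is no real obstacle here. The only point needing care is keeping track of signs when dividing: each division must be by a strictly positive quantity ($\hbar$, $n$, $\hbar n$), so that the direction of every inequality and the reversibility of every step are preserved.
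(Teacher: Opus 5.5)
Your proposal is correct and follows the paper's proof essentially step for step. Both treat the two conditions in \eqref{energy_window} as affine inequalities in $\delta_0$ and solve them by dividing by the positive quantities $\hbar$ and $\hbar n$. Both then obtain nonemptiness by requiring the lower endpoint to lie below the upper one, which gives $(n+1)\tau^{\max}<C_6/R_b^6$ and matches Lemma~\ref{lem:window}. You also say explicitly that each step is reversible because every division is by a strictly positive quantity; the paper leaves this implicit.
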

	
	\begin{proof}
		Both inequalities in \eqref{energy_window} are affine in $\delta_{0}$, while
		$\tau^{\max}$, $\Jblo=C_{6}/R_{b}^{6}$ and $n$ do not depend on $\delta_{0}$;
		solving the first for $\delta_{0}>\tau^{\max}/\hbar$ and the second for
		$\delta_{0}<(\Jblo-\tau^{\max})/(\hbar n)$ gives \eqref{window_delta0_interval}.
		The interval is nonempty iff its lower bound is below its upper bound, i.e.\
		$\tau^{\max}/\hbar<(\Jblo-\tau^{\max})/(\hbar n)$, equivalently
		$(n+1)\tau^{\max}<\Jblo$, recovering the nonemptiness criterion of
		Lemma~\ref{lem:window}.
	\end{proof}
	
	\begin{remark}[Reading of the window]\label{rem:window}
		The two inequalities in \eqref{energy_window} are exactly the two physical
		requirements that the qualitative hypothesis $C_{6}/R_{b}^{6}\gg\hbar\delta_{0}$
		is meant to encode: the \emph{upper} bound $\hbar\delta_{0}\,n+\tau^{\max}<\Jblo$
		is the blockade condition proper (one violated edge costs more than the largest
		possible detuning reward $\hbar\delta_{0}\,n$ plus tails), while the \emph{lower}
		bound $\tau^{\max}<\hbar\delta_{0}$ guarantees that one extra excitation
		($+\hbar\delta_{0}$) outweighs the worst tail, so that maximising cardinality, not merely feasibility, is energetically selected. Both are uniform in
		the choice of MIS, and the window collapses onto $\hbar\delta_{0}\in(0,\Jblo/n)$
		in the idealised blockade used in Corollary~\ref{cor_effective_hamiltonian}.
	\end{remark}
	
	\begin{lemma}[Ground states at the endpoints]\label{lemma_endpoints}
		Consider the Pasqal Hamiltonian \eqref{instantaneous_Pasqal} with $C_{6}>0$ and
		$\delta_{0}>0$.
		\begin{enumerate}
			\item[\textup{(a)}] At $(\Omega,\delta)=(0,-\delta_{0})$ the operator
			$\ham(0)=H_{0}+\hbar\delta_{0}\sum_{k=1}^{n}\hat n_{k}$ is diagonal in the
			computational basis with eigenvalues $E_{a}(\bx)=\langle\bx\rvert H_{0}\lvert\bx\rangle
			+\hbar\delta_{0}\lvert\bx\rvert\ge0$. Its \emph{unique} ground state is
			$\lvert0\rangle^{\otimes n}$, with eigenvalue $0$, and the spectral gap above
			it equals exactly $\hbar\delta_{0}>0$ \textup{(}attained on the $n$
			single-excitation states\textup{)}. No blockade hypothesis is needed.
			\item[\textup{(b)}] At $(\Omega,\delta)=(0,+\delta_{0})$ the operator
			$\ham(1)=H_{0}-\hbar\delta_{0}\sum_{k=1}^{n}\hat n_{k}=H_C$ is diagonal with eigenvalues
			$E_{b}(\bx)=\langle\bx\rvert H_{0}\lvert\bx\rangle-\hbar\delta_{0}\lvert\bx\rvert$. Define
			\begin{equation}\label{loc:Hmis}
				\Hmis\defeq\spn_{\mathbb{C}}\bigl\{\lvert\bx\rangle:\bx\in\MIS(G)\bigr\},
				\qquad \dim\Hmis=d ;
			\end{equation}
			Under the energy-scale window \eqref{energy_window}
			\begin{itemize}
				\item its ground manifold is
				spanned by maximum-independent-set strings (contained in $\Hmis$);
				\item in the idealised blockade its ground manifold equals $\Hmis$;
				\item the gap separating $\Hmis$ from the rest of the spectrum is
				$\ge\hbar\delta_{0}-\tau^{\max}>0$.
			\end{itemize}
			In particular the ground manifold of $\ham(1)$ is contained in $\Hmis$
			under \eqref{energy_window} \emph{alone}, no blockade idealisation and
			no symmetry hypothesis is needed, so the adiabatic evolution of
			Section~\ref{subsec:main_result} lands in the relaxed target $\Hmis$ of
			\eqref{relaxed_goal}.
			% The symmetry-adapted state $\psi_{\mathrm{MIS}}\in\Hmis$
			% of \eqref{symmetry_adapted_MIS} is one admissible element of this manifold.
		\end{enumerate}
		\end{lemma}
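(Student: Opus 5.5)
Both endpoint operators are diagonal in the computational basis, since $H_0$ and $\sum_k \hat n_k$ are by \eqref{eq_projector_action} and \eqref{fsdgs}. So the whole proof reduces to minimising the explicit functions $E_a$ and $E_b$ over $\{0,1\}^n$; no spectral perturbation theory is needed. The eigenvalues of $\ham(0)$ and $\ham(1)$ are exactly $\{E_a(\bx)\}$ and $\{E_b(\bx)\}$, with eigenvectors $\lvert\bx\rangle$. Ground manifolds are therefore spans of the minimising strings, and gaps are differences between the minimum and the next value of these functions.

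For (a), I would use $C_6>0$, so that $\langle\bx\rvert H_0\lvert\bx\rangle=\sum_{i<j}J_{ij}x_ix_j\ge0$. Hence $E_a(\bx)\ge\hbar\delta_0\lvert\bx\rvert\ge0$, with $E_a(\bze)=0$. Any $\bx\neq\bze$ has $\lvert\bx\rvert\ge1$, so $E_a(\bx)\ge\hbar\delta_0>0$, which gives uniqueness of $\lvert0\rangle^{\otimes n}$ as ground state. A single-excitation string $e_k$ has no pair with $x_ix_j=1$, so $E_a(e_k)=\hbar\delta_0$ exactly. The gap is therefore exactly $\hbar\delta_0$, attained on these $n$ states. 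No blockade or window assumption enters.

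For (b), I would first note the identity $\ham(1)=H_C$ up to the convention that the detuning scale in \eqref{cost_hamiltinian+remainder} is $\delta_0$. Lemma~\ref{lem:window}(ii) then gives $\arg\min E_b\subseteq\MIS(G)$, so the ground manifold $\spn\{\lvert\bx\rangle:\bx\in\arg\min E_b\}$ is contained in $\Hmis$. Lemma~\ref{lem:window}(iii) gives equality in the idealised blockade $\tau\equiv0$.

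The gap statement is the one step needing care, because I must interpret it as separating $\Hmis$ (not just the ground manifold) from $\Hmis^\perp$. I would show $\min_{\bx\notin\MIS(G)}E_b(\bx)-\max_{\by\in\MIS(G)}E_b(\by)\ge\hbar\delta_0-\tau^{\max}$, reusing Steps 1--2 of the proof of Lemma~\ref{lem:window}. The only change is to bound against an arbitrary $\by\in\MIS(G)$ rather than a fixed $\by_*$, using $E_b(\by)\le\tau^{\max}-\hbar\delta_0\alpha(G)$.
\begin{itemize}
\item For a non-maximum independent $\bz$, \eqref{no_max_indep_set} gives exactly $E_b(\bz)-E_b(\by)\ge\hbar\delta_0-\tau^{\max}$.
\item For a violator $\bx$, \eqref{eq_E_b_estimate} gives $E_b(\bx)-E_b(\by)\ge\Jblo-\hbar\delta_0 n-\tau^{\max}+\hbar\delta_0\alpha(G)$. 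This is $\ge\hbar\delta_0-\tau^{\max}$ as soon as $\Jblo\ge\hbar\delta_0 n$ and $\alpha(G)\ge1$, both of which follow from \eqref{energy_window} (for nonempty $V$, $\alpha(G)\ge1$).
\end{itemize}
Positivity of $\hbar\delta_0-\tau^{\max}$ is the first window inequality. Since $\ham(1)$ is diagonal, $\Hmis$ and its orthogonal complement are invariant subspaces. The spectral separation then follows directly from these inequalities, which yields the final claim that the adiabatic endpoint lies in the relaxed target \eqref{relaxed_goal}.
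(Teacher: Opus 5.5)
Your proposal is correct and follows the paper's proof essentially step for step. Like the paper, you diagonalise both endpoints in the computational basis and minimise $E_a$ directly for (a), then use Lemma~\ref{lem:window}(ii)--(iii) for the ground manifold in (b), and obtain the band gap by rerunning Steps 1--2 of that lemma against an arbitrary $\by\in\MIS(G)$. Your violator bound $\ge\hbar\delta_0-\tau^{\max}$ is weaker than the paper's $>\hbar\delta_0$, but it is all that is needed, and you are right that the identification $\ham(1)=H_C$ tacitly requires $\delta_0=1$ in the definition of $H_C$.
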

		
		\begin{proof}
		\textbf{Part (a).}
		By \eqref{eq_projector_action} the operators $H_{0}$ and $\sum_{k=1}^{n}\hat n_{k}$ are both
		diagonal in $\{\lvert\bx\rangle\}$, hence so is their sum
		$\ham(0)=H_{0}+\hbar\delta_{0}\sum_{k=1}^{n}\hat n_{k}$; its eigenpairs are
		$\bigl(E_{a}(\bx),\lvert\bx\rangle\bigr)$ with $E_{a}$ as in \eqref{loc:Ea}.
		Since $C_{6}>0$ gives $\langle\bx\rvert H_{0}\lvert\bx\rangle\ge0$ by
		\eqref{drift_on_comp_basis}, and $\delta_{0}>0$ gives $\hbar\delta_{0}\lvert\bx\rvert\ge0$,
		we have $E_{a}(\bx)\ge0$ for every $\bx$.
		
		\emph{Uniqueness of the ground state.} $E_{a}(\bx)=0$ forces both summands to
		vanish; in particular $\hbar\delta_{0}\lvert\bx\rvert=0$ with $\delta_{0}>0$
		forces $\lvert\bx\rvert=0$, i.e.\ $\bx=\bze$. Conversely $E_{a}(\bze)=0$ since
		$\langle\bze\rvert H_{0}\lvert\bze\rangle=0$ and $\lvert\bze\rvert=0$. Thus
		$\lvert\bze\rangle=\lvert0\rangle^{\otimes n}$ is the \emph{only} eigenvector with
		eigenvalue $0$; as $0=\min_{\bx}E_{a}(\bx)$, it is the unique ground state.
		
		\emph{Value of the gap.} For $\bx\ne\bze$ we have $\lvert\bx\rvert\ge1$, hence
		$E_{a}(\bx)\ge\hbar\delta_{0}\lvert\bx\rvert\ge\hbar\delta_{0}$. Equality
		$E_{a}(\bx)=\hbar\delta_{0}$ requires $\lvert\bx\rvert=1$ and
		$\langle\bx\rvert H_{0}\lvert\bx\rangle=0$; a single-excitation string
		$\bx=e_{k}$ satisfies both (one excited site cannot form an interacting pair, so
		the sum in \eqref{drift_on_comp_basis} is empty). There are exactly $n$ such strings,
		and any $\bx$ with $\lvert\bx\rvert\ge2$ has $E_{a}(\bx)\ge2\hbar\delta_{0}$. Hence
		the first excited level sits at $\hbar\delta_{0}$ with multiplicity $n$, and the
		spectral gap above the ground state equals exactly $\hbar\delta_{0}>0$. This
		proves~(a); note that no smallness of $C_{6}/R_{b}^{6}$ versus $\hbar\delta_{0}$
		was used.
		
		\medskip
		\textbf{Part (b).}
		As in (a), $\ham(1)=H_{0}-\hbar\delta_{0}\sum_{k=1}^{n}\hat n_{k}$ is diagonal with
		eigenpairs $\bigl(E_{b}(\bx),\lvert\bx\rangle\bigr)$, $E_{b}$ as in \eqref{loc:Eb}.
		Consequently, the ground manifold of $\ham(1)$ is
		\begin{equation}\label{loc:groundmanifold}
			\ker\bigl(\ham(1)-\lambda_{0}I\bigr)
			=\spn_{\mathbb{C}}\Bigl\{\lvert\bx\rangle:\bx\in\arg\min_{\bx'}E_{b}(\bx')\Bigr\},
			\qquad \lambda_{0}=\min_{\bx}E_{b}(\bx).
		\end{equation}
		
		\emph{Identification of the ground manifold.} Under the window \eqref{energy_window},
		Lemma~\ref{lem:window}(ii) gives $\arg\min E_{b}\subseteq\MIS(G)$, so the ground
		manifold is contained in $\Hmis$ of \eqref{loc:Hmis}. In the idealised blockade
		($\tau\equiv0$, equivalently the projected dynamics of
		Corollary~\ref{cor_effective_hamiltonian} on $\His$, on which
		$\ham(1)\!\restriction_{\His}=P_{\mathrm{IS}}\bigl(-\hbar\delta_{0}\sum_{k=1}^{n}\hat n_{k}\bigr)P_{\mathrm{IS}}$
		acts as multiplication by $-\hbar\delta_{0}\lvert\bx\rvert$), Lemma~\ref{lem:window}(iii)
		gives $\arg\min E_{b}=\MIS(G)$ \emph{exactly}, whence the ground manifold is the
		full $\Hmis$, of dimension $d=\lvert\MIS(G)\rvert$.
		
		\emph{Separation from the rest of the spectrum.} To continue with the proof, let us perform some estimates, based on some computations in Steps~1--2 of the proof of Lemma~\ref{lem:window}. Take $\by$ an arbitrary element of $\MIS(G)$. On the one hand, for any $\bx\notin\MIS(G)$ not representing an independent set for the graph $G$, by \eqref{eq_E_b_estimate}, we have
		\begin{equation}\label{eq_E_b_estimate_II}
			E_{b}(\bx)-E_{b}(\by)
			\ \ge\ \bigl(\Jblo-\hbar\delta_{0}\,n-\tau^{\max}\bigr)+\hbar\delta_{0}\,\alpha(G)>\hbar\delta_{0},
		\end{equation}
		where in the last inequality we employed the second inequality in \eqref{energy_window}.
		On the other hand, for every $\bx\notin\MIS(G)$ representing an independent set for the graph $G$, from \eqref{no_max_indep_set}, we get
		\begin{equation}\label{no_max_indep_set_II}
			E_{b}(\bx)-E_{b}(\by)
			\ \ge\ \hbar\delta_{0}-\tau^{\max}.
		\end{equation}
		Hence, for every $\bx\notin\MIS(G)$ and for any $\by\in \MIS(G)$, we have
		\begin{equation}\label{}
			E_{b}(\bx)-E_{b}(\by)
			\ \ge\ \hbar\delta_{0}-\tau^{\max},
		\end{equation}
		whence
		\begin{equation}\label{loc:bandgap}
			\min_{\bx\notin\MIS(G)}E_{b}(\bx)\ -\ \max_{\by\in\MIS(G)}E_{b}(\by)
			\ \ge\ \hbar\delta_{0}-\tau^{\max}\ >\ 0 ,
		\end{equation}
		In particular, in the idealised blockade the
		endpoint gap is exactly $\hbar\delta_{0}$, consistent with
		Assumption~\ref{assumption_gap_Pasqal} at $s=1$.
		
		\emph{The ground manifold sits inside $\Hmis$.} Combining the identification
		of the ground manifold with the separation estimate \eqref{loc:bandgap}, every
		ground state of $\ham(1)$ lies in $\Hmis$ under \eqref{energy_window} alone.
		Hence whichever vector of the ground manifold is selected at $s=1$, the exact
		ground state in the idealised blockade, or any minimal-tail combination at finite
		blockade, belongs to $\Hmis$; in particular the symmetry-adapted state
		$\psi_{\mathrm{MIS}}\in\Hmis$ of \eqref{symmetry_adapted_MIS} is admissible. No
		identification of a \emph{single} ground vector is needed for the relaxed goal
		\eqref{relaxed_goal}. This proves~(b).
	\end{proof}
		
		\begin{remark}[Why the relaxed goal removes the finite-blockade splitting]\label{rem:splitting}
		At finite blockade the $d$-fold $\Hmis$-band is split by the tails into a width
		$\max_{\by\in\MIS}\tau(\by)-\min_{\by\in\MIS}\tau(\by)\le\tau^{\max}$, which by
		\eqref{energy_window} stays \emph{below} the binding gap $\hbar\delta_{0}-\tau^{\max}$
		of \eqref{loc:bandgap}; thus $\Hmis$ remains a well-isolated band. For the
		\emph{strict} target $\psi_{\mathrm{MIS}}$ this splitting mattered: $\psi_{\mathrm{MIS}}$
		is the exact ground state only when the splitting vanishes on the relevant invariant
		sector (for instance when $\Aut(G,w)$ acts transitively on $\MIS(G)$). For the
		\emph{relaxed} goal \eqref{relaxed_goal} the splitting is immaterial: any vector of
		the $\Hmis$-band, the exact ground state, or any superposition produced by
		intra-band diabatic transitions, still lies in $\Hmis$, hence still returns a
		maximum independent set with certainty by \eqref{measurement_MIS_certainty}. Only
		leakage \emph{out} of $\Hmis$, across the binding gap \eqref{loc:bandgap}, can spoil
		the outcome, and it is precisely such leakage that the integral tracking functional of
		Section~\ref{sec:Advanced State Stabilization via Integral Tracking Functionals}
		is designed to suppress.
	\end{remark}
	
	\begin{assumption}[Spectral gap hypothesis for the Pasqal Hamiltonian]\label{assumption_gap_Pasqal}
		There exist an adiabatic control path $\gamma$ in the sense of Definition~\ref{def_control_path}, with endpoints $\gamma(0)=(0,-\delta_0)$ and $\gamma(1)=(0,+\delta_0)$ sharing the detuning $\delta_0>0$ of the window \eqref{energy_window}, and constants $c>0$, $q\in\mathbb{N}$ (independent of $n$ and $G$), such that
		\begin{equation}\label{spectral_gap_hypothesis_Pasqal}
			\Delta_\gamma^{\min} \;\coloneqq\; \min_{s\in[0,1]}\Delta_\gamma(s) \;\geq\; \frac{c}{n^q}.
		\end{equation}
	\end{assumption}
	
	\subsection{Main result: approximate reachability of $\Hmis$ via an adiabatic anneal}\label{subsec:main_result}
	
	Since $\phi_B=|0\rangle^{\otimes n}$ is, by Lemma~\ref{lemma_endpoints}(a), the unique ground state of the Pasqal Hamiltonian $H_\gamma(0)$ at the control endpoint $(\Omega,\delta)=(0,-\delta_0)$, no preliminary state-preparation (``reach'') phase is required: the protocol consists of a \emph{single adiabatic sweep} of the physical Pasqal Hamiltonian \eqref{neutral_atoms_hamiltonian}.
	
	\begin{description}
		\item[\emph{Adiabatic anneal}.] Starting from $\phi_B=|0\rangle^{\otimes n}$, the ground state of $H_\gamma(0)$ (Lemma~\ref{lemma_endpoints}(a)), slowly sweep $(\Omega(t),\delta(t))$ along the adiabatic control path $\gamma$ from $(0,-\delta_0)$ to $(0,+\delta_0)$, with time rescaling $s=t/T_{\mathrm{anneal}}$. This is exactly the adiabatic-evolution scheme of \cite{farhi2000quantum}: the instantaneous ground state is tracked from $H_\gamma(0)$ to $H_\gamma(1)$, whose ground manifold is contained in the MIS subspace $\Hmis$ (Lemma~\ref{lemma_endpoints}(b)).
	\end{description}
	
	\begin{remark}[Equivalence with the standard QAOA initialisation]\label{remark_qaoa_init}
		The protocol could equally be started from the standard QAOA state $|+\rangle^{\otimes n}$. Indeed, by Proposition~\ref{prop_same_orbit} one has $|+\rangle^{\otimes n}\in\mathcal{G}\cdot\phi_B$, so $\phi_B=|0\rangle^{\otimes n}$ is reachable from $|+\rangle^{\otimes n}$ by an admissible control; the global $Y$-rotation \eqref{global_Y_rotation} realises this transfer \emph{approximately}, and by Remark~\ref{remark_Y_realization} it is synthesised by a strong-pulse train of total duration $O(1)$ in $n$. Prepending this transfer recovers the two-phase reach--anneal protocol, the reach phase contributing only an $n$-independent additive constant to the total time. We adopt the annealing initialisation $\phi_B=|0\rangle^{\otimes n}$ precisely because it makes the reach phase unnecessary, so that the proof reduces to the adiabatic step alone.
	\end{remark}
	
	\begin{theorem}[Approximate state controllability into the MIS subspace $\Hmis$ via the Pasqal Hamiltonian]\label{thm_approximate_controllability_MIS}
		Consider the bilinear Schr\"{o}dinger equation \eqref{pasqal_equation_explicit} with controls $\Omega(t)\in[0,\Omega_{\max}]$ and $\delta(t)\in[-\delta_{\max},\delta_{\max}]$, and initial state $\psi(0)=\phi_B=|0\rangle^{\otimes n}$. Let $G=(V,E)$ be a unit-disk graph realized by the atom layout through \eqref{rydberg_radius}. Suppose:
		\begin{enumerate}
			\item[\textup{(H0)}] \textbf{energy-scale window:} the energy-scale window \eqref{energy_window} holds for some $\delta_0>0$\footnote{the validity of this assumption depends on the configuration \eqref{rydberg_radius} and, in particular, on the magnitude of the radius $R_b$};
			\item[\textup{(H1)}] \textbf{Gap non-closing (interior):} there exists an adiabatic control path $\gamma$ in the sense of Definition~\ref{def_control_path}, with endpoints $\gamma(0)=(0,-\delta_0)$, $\gamma(1)=(0,+\delta_0)$, along which the instantaneous spectral gap \eqref{spectral_gap_Pasqal} stays strictly positive,
			\begin{equation}\label{gap_positivity}
				\Delta_\gamma^{\min}\;=\;\min_{s\in[0,1]}\Delta_\gamma(s)\;>\;0 .
			\end{equation}
		\end{enumerate}
		\footnote{Only the qualitative positivity \eqref{gap_positivity} is used in the proof below; the polynomial lower bound $\Delta_\gamma^{\min}\ge c/n^q$ of Assumption~\ref{assumption_gap_Pasqal} is \emph{not} required for the existence statement and is invoked solely in the controllability-time estimate (Remark~\ref{remark_time_complexity}).}

		Then the relaxed goal \eqref{relaxed_goal} is met \emph{approximately}: there exist $T^{*}>0$ and $(\Omega^{*},\delta^{*})\in\mathscr{U}_{\mathrm{ad}}^{T^{*}}$ such that
		\begin{equation}\label{approximate_controllability_statement}
			\mathrm{dist}\big(\psi(T_{\mathrm{anneal}}),\,\Hmis\big)\ \le\ \varepsilon.
		\end{equation}
		Consequently, measurement of $\psi(T^{*})$ in the computational basis yields a Maximum Independent Set (MIS), with probability $1-\varepsilon^2$.
	\end{theorem}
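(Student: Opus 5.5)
The argument is an application of the quantum adiabatic theorem to the physical Pasqal Hamiltonian along the path $\gamma$ of (H1). The endpoint facts are taken from Lemma~\ref{lemma_endpoints}, and compactness of $[0,1]$ then converts the qualitative gap (H1) into a uniform one. Fix $\varepsilon>0$. For $T>0$ set $\Omega^{*}(t)=\Omega(t/T)$ and $\delta^{*}(t)=\delta(t/T)$ on $[0,T]$. Since $\gamma$ takes values in $[0,\Omega_{\max}]\times[-\delta_{\max},\delta_{\max}]$, this pair belongs to $\mathscr{U}_{\mathrm{ad}}^{T}$. Let $\psi_T$ be the corresponding solution of \eqref{QAOA_state_equation}. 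In the rescaled time $s=t/T$ it solves
\[
 i\hbar\,\partial_s\psi = T\,\ham(s)\psi,\qquad \psi(0)=\phi_B .
\]
Because $\gamma$ is smooth, $s\mapsto\ham(s)$ is a smooth family of Hermitian matrices. Its derivatives $\dot{\ham}$ and $\ddot{\ham}$ are bounded by constants depending on $n$, $\Omega_{\max}$, $\delta_0$ and $\gamma$; the bounds are finite, although not necessarily polynomial in $n$.

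First, the spectral setup. Let $P(s)$ be the spectral projector of $\ham(s)$ onto the eigenvalue $\lambda_0(s)$. By (H1), $\lambda_1(s)-\lambda_0(s)\ge\Delta_\gamma^{\min}>0$ for every $s\in[0,1]$. Hence $\lambda_0(s)$ is a simple eigenvalue on all of $[0,1]$, whereas the definition only records the ordered eigenvalues. It is therefore separated from the rest of the spectrum uniformly, and $P(s)$ is a smooth rank-one projector. Lemma~\ref{lemma_endpoints}(a) gives $P(0)=|\bze\rangle\langle\bze|$, so $P(0)\phi_B=\phi_B$. Lemma~\ref{lemma_endpoints}(b), valid under (H0), gives $\operatorname{Ran}P(1)\subseteq\Hmis$. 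The positivity of the gap at $s=1$, together with the band separation \eqref{loc:bandgap}, guarantees that the continued eigenvector really lands in the ground manifold of $\ham(1)$ and hence inside $\Hmis$.

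Second, the adiabatic estimate, which is the main step. I would invoke the standard adiabatic theorem with a gap condition, in the Kato or Jansen--Ruskai--Seiler form:
\[
 \bigl\|(I-P(1))\psi_T(1)\bigr\|\ \le\ \frac{\hbar C}{T}\left(\frac{\|\dot{\ham}(0)\|}{(\Delta_\gamma^{\min})^{2}}+\frac{\|\dot{\ham}(1)\|}{(\Delta_\gamma^{\min})^{2}}+\int_0^1\Bigl(\frac{\|\ddot{\ham}\|}{(\Delta_\gamma^{\min})^{2}}+\frac{\|\dot{\ham}\|^{2}}{(\Delta_\gamma^{\min})^{3}}\Bigr)ds\right).
\]
Here $C$ is a universal constant. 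If the theorem cannot simply be quoted, I would reprove the bound by the usual route: pass to the adiabatic (Kato) frame, write the error as an oscillatory integral involving the reduced resolvent, and integrate by parts once. This integration by parts is where the gap enters, and it is the delicate part of the argument. The only real subtlety is that the gap must be uniform in $s$, and (H1) provides exactly this.

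Finally, conclusion. Choose $T^{*}$ large enough that the right-hand side is at most $\varepsilon$, and set $T_{\mathrm{anneal}}=T^{*}$. Since $\operatorname{Ran}P(1)\subseteq\Hmis$,
\[
\mathrm{dist}(\psi(T^{*}),\Hmis)\le\|(I-P(1))\psi(T^{*})\|\le\varepsilon,
\]
which is \eqref{approximate_controllability_statement}. The state remains normalised, so \eqref{measurement_MIS_certainty} gives success probability
\[
\|P_{\Hmis}\psi(T^{*})\|^{2}=1-\mathrm{dist}(\psi(T^{*}),\Hmis)^{2}\ge1-\varepsilon^{2}.
\]
The symmetry constraint of Proposition~\ref{prop_symmetry_constraint} causes no conflict. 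The evolution stays in the $\mathrm{Aut}(G,w)$-invariant sector, the simple ground state lies in that sector, and $\Hmis$ is $\mathrm{Aut}(G,w)$-invariant.
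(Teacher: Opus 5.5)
Your proposal is correct and follows the same route as the paper. Lemma~\ref{lemma_endpoints}(a) makes $\phi_B$ the simple ground state at $s=0$, the adiabatic theorem with the uniform gap from (H1) keeps the evolved state near the ground projector, Lemma~\ref{lemma_endpoints}(b) places $\operatorname{Ran}P(1)$ inside $\Hmis$ under (H0), and $T$ is taken large. The differences are minor: you quote the Jansen--Ruskai--Seiler bound (with $\dot{\ham}$ and $\ddot{\ham}$ terms) instead of the $\delta=1$ case of Aharonov et al., and you sweep along the path supplied by (H1) itself. That is actually the consistent choice, since the paper's proof runs the bell-shaped schedule \eqref{bell_schedule} while using the gap $\Delta_\gamma^{\min}$ of the (H1) path.
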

	
	\begin{remark}[Division of labour between \textup{(H0)}, \textup{(H1)} and the polynomial-gap Assumption~\ref{assumption_gap_Pasqal}]\label{remark_hypotheses_roles}
		The two hypotheses of Theorem~\ref{thm_approximate_controllability_MIS} constrain two different aspects of the protocol and are logically independent. \textup{(H0)} is a pure \emph{energy-scale} condition relating the detuning $\delta_0$ to the couplings $C_6/R_b^6$ and the van der Waals tail $\tau^{\max}$ (it is approximately \eqref{energy_window}); it fixes the spectra only at the two \emph{endpoints} $s\in\{0,1\}$ through Lemma~\ref{lemma_endpoints}, placing $|0\rangle^{\otimes n}$ at the bottom of $H_\gamma(0)$ and the entire ground manifold of $H_\gamma(1)$ inside $\Hmis$, and it is independent of $n$. \textup{(H1)} is a purely \emph{qualitative} condition on the \emph{interior} of the path: it asks only that the instantaneous gap never close, $\Delta_\gamma^{\min}>0$, with no rate attached. For the existence statement \eqref{approximate_controllability_statement} this positivity is all that the adiabatic step (Step~1) uses, since for a \emph{fixed} graph $G$ a strictly positive gap suffices to drive the right-hand side of \eqref{adiabatic_bound_Pasqal} below any tolerance by enlarging $T_{\mathrm{anneal}}$.

		The \emph{quantitative} strengthening, the polynomial lower bound $\Delta_\gamma^{\min}\ge c/n^q$ of Assumption~\ref{assumption_gap_Pasqal}, with $c,q$ independent of $n$, is a strictly stronger, $n$-dependent hypothesis. It is \emph{not} part of Theorem~\ref{thm_approximate_controllability_MIS}: it plays no role in the qualitative reachability \eqref{approximate_controllability_statement} and is invoked only to turn the sufficient anneal time \eqref{adiabatic_time_Pasqal} into a polynomial-in-$n$ bound (Remark~\ref{remark_time_complexity}). It is this polynomial form, not the bare positivity \textup{(H1)}, that is forced to fail, the gap closing faster than any inverse polynomial, on graph families for which MIS is NP-hard. The conditions are independent: the window \eqref{energy_window} can hold while the interior gap is exponentially small (or closes), and a large interior gap does not by itself place $|0\rangle^{\otimes n}$ and $\Hmis$ at the endpoints.
	\end{remark}
	
	\begin{proof}[Proof of Theorem \ref{thm_approximate_controllability_MIS}]
		\textit{Step 0. Initial setting.}
		
		Fix a tolerance $\varepsilon>0$ for the relaxed goal \eqref{relaxed_goal}. We remind the definition of $H_{\gamma}$
		\begin{equation}\label{instantaneous_Pasqal_II}
			H_{\gamma}(s) \;\coloneqq\; \frac{\hbar\,\Omega(s)}{2}\sum_{k=1}^n X_k \;-\; \hbar\,\delta(s)\sum_{k=1}^n \hat n_k \;+\; H_0.
		\end{equation}
		Because the initial state $\phi_B=|0\rangle^{\otimes n}$ is already the ground state of $H_\gamma(0)$ (Lemma~\ref{lemma_endpoints}(a)), no preliminary reach phase is required, and the argument reduces to the adiabatic anneal (Step~1).
		
		\textit{Step 1. From $\phi_B=|0\rangle^{\otimes n}$ into $\Hmis$: approximate controllability via the Quantum Adiabatic Theorem.}
		
		We apply the adiabatic schedule $\gamma(t)\coloneqq (\Omega(t), \delta(t))$, with
		\begin{equation}\label{}
			\Omega(t) = \Omega_{\max} 4s(1-s),\qquad \delta(t) = \delta_0(2s-1),\qquad s=t/T_{\mathrm{anneal}}\in[0,1].
		\end{equation}
		Here and below $\dot\gamma$ denotes the derivative of the schedule with respect to the rescaled time $s\in[0,1]$, so that $\|\dot\gamma\|_\infty=\sup_{s\in[0,1]}\|\tfrac{d}{ds}\gamma(s)\|\leq 4\Omega_{\max} + 2\delta_0$ is independent of $n$; the corresponding bound on the physical Hamiltonian velocity is $\sup_{s\in [0, 1]}\|\tfrac{d}{ds}H_\gamma(s)\|\le L_H\,\|\dot\gamma\|_\infty$, with $L_H\coloneqq\sup\|\nabla_{(\Omega,\delta)}H_\gamma\|$ the (Lipschitz) constant of the map $(\Omega,\delta)\mapsto H_\gamma$, absorbed into $C_{\mathrm{ad}}$ below.
		
		By Lemma~\ref{lemma_endpoints}(a), $\phi_B=|0\rangle^{\otimes n}$ is the \emph{simple} ground state of the Pasqal Hamiltonian $H_\gamma(0)$, with gap $\hbar\delta_0>0$. Since $\phi_B$ is $\mathrm{Aut}(G,w)$-invariant and the Pasqal Hamiltonian commutes with every $P_\sigma$ (Proposition~\ref{prop_pasqal_no_full_controllability}), the whole evolution remains in the invariant sector $\mathcal{H}^{\mathrm{Aut}(G,w)}$, where the adiabatic theorem may be applied. Following the adiabatic-evolution programme of \cite{farhi2000quantum} and applying the Quantum Adiabatic Theorem in its explicit-gap form (see, e.g., \cite{reichardt2004quantum,aharonov2008adiabatic}, the explicit-gap estimate of \cite{jansen2007bounds}, or the lecture notes \cite{Childs2008Lecture18}) to \eqref{instantaneous_Pasqal_II} for the simple ground state $\phi_B=|0\rangle^{\otimes n}$ at $s=0$, specialized to the admissible value $\delta=1$ of the free parameter in \cite[Theorem~2.1]{aharonov2008adiabatic},\footnote{For any \emph{fixed} $\delta>0$ (the free parameter of the adiabatic theorem, not to be confused with the detuning $\delta(t)$), \cite[Theorem~2.1]{aharonov2008adiabatic} guarantees $\varepsilon$-closeness to the instantaneous ground state once $T_{\mathrm{anneal}}\ge\Omega\big(\|\dot\gamma\|_\infty^{1+\delta}\big/[\varepsilon^{\delta}(\Delta_\gamma^{\min})^{2+\delta}]\big)$; the value $\delta=0$ is inadmissible there, the hidden constant diverging as $\delta\downarrow 0$. The choice $\delta=1$ is the unique one rendering the achievable error \emph{linear} in $1/T_{\mathrm{anneal}}$, and yields the gap exponent $2+\delta=3$ together with the squared driving norm $\|\dot\gamma\|_\infty^{2}$. The same $(\Delta_\gamma^{\min})^{-3}$ is the dominant gap dependence in the explicit estimate of \cite{jansen2007bounds}, whose leading term scales as $\|\dot\gamma\|_\infty^{2}/(\Delta_\gamma^{\min})^{3}$ (with subleading curvature terms $\propto 1/(\Delta_\gamma^{\min})^{2}$). For the bell-shaped, fixed-endpoint schedule used here, $\|\dot\gamma\|_\infty$ and $\|\ddot\gamma\|_\infty$ are $n$-independent constants, all absorbed into $C_{\mathrm{ad}}$.} gives
		\begin{equation}\label{adiabatic_bound_Pasqal}
			\big\|\psi(T_{\mathrm{anneal}}) - e^{i\varphi}\,\psi_1\big\| \;\leq\; \frac{C_{\mathrm{ad}}\,\|\dot\gamma\|_\infty^{2}}{(\Delta_\gamma^{\min})^{3}\,T_{\mathrm{anneal}}},
		\end{equation}
		where $C_{\mathrm{ad}}>0$ is a constant and $\psi_1$ is a ground state of $H_\gamma(1)$. The right-hand side is well posed, because $\Delta_\gamma^{\min}>0$ by \textup{(H1)}. By Lemma~\ref{lemma_endpoints}(b), under the window \eqref{energy_window} the ground manifold of $H_\gamma(1)$ is contained in $\Hmis$; hence
		\begin{equation}\label{gs_in_Hmis}
			\psi_1\in\Hmis .
		\end{equation}
		No identification of this vector with $\psi_{\mathrm{MIS}}$ is required, and any intra-band tail splitting at $s=1$ (Remark~\ref{rem:splitting}) is immaterial: it does not move the limit out of $\Hmis$.
		
		Hence, choosing $T_{\mathrm{anneal}}$ large enough that the right-hand side of \eqref{adiabatic_bound_Pasqal} is below $\varepsilon$, drives the state to within $\varepsilon$ of $\Hmis$:
		\begin{equation}\label{approximate_controllability_Pasqal}
			\mathrm{dist}\big(\psi(T_{\mathrm{anneal}}),\,\Hmis\big)\ \le\ \inf_{\theta\in\mathbb{R}}\big\|\psi(T_{\mathrm{anneal}}) - e^{i\theta}\psi_1\big\| \;\le\; \frac{C_{\mathrm{ad}}\,\|\dot\gamma\|_\infty^{2}}{(\Delta_\gamma^{\min})^{3}\,T_{\mathrm{anneal}}} \;\le\; \varepsilon,
		\end{equation}
		and in particular $\psi_1\in\Hmis\cap\overline{\mathcal{R}(\phi_B)}$ \textup{(}letting $T_{\mathrm{anneal}}\uparrow + \infty$: each $\psi(T_{\mathrm{anneal}})\in\mathcal{R}(\phi_B)$, being produced by the admissible anneal sweep\textup{)}. This finishes the proof.
	\end{proof}
	
	\begin{corollary}[Unique MIS]\label{cor_unique_MIS}
		If $|\mathrm{MIS}(G)|=1$, say $\mathrm{MIS}(G)=\{\mathbf{x}^{*}\}$, then $\Hmis=\spn_{\mathbb{C}}\{|\mathbf{x}^{*}\rangle\}$ is one-dimensional and $\psi_{\mathrm{MIS}}=|\mathbf{x}^{*}\rangle$. Under hypotheses \textup{(H0)}--\textup{(H1)} the relaxed goal \eqref{relaxed_goal} then forces $\psi(T^{*})=e^{i\theta}|\mathbf{x}^{*}\rangle$ for some phase $\theta$: the Pasqal equation is driven approximately from $\phi_B$ to the unique optimal bitstring (up to a global phase).
	\end{corollary}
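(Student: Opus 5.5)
The plan is to derive the corollary directly from Theorem~\ref{thm_approximate_controllability_MIS}, adding only the linear algebra of a one-dimensional target. First, with $\mathrm{MIS}(G)=\{\mathbf{x}^*\}$ we have $d=1$, so \eqref{mis_subspace_def} gives $\Hmis=\spn_{\mathbb{C}}\{|\mathbf{x}^*\rangle\}$. Since $d=1$, \eqref{symmetry_adapted_MIS} reduces to $\psi_{\mathrm{MIS}}=|\mathbf{x}^*\rangle$. A consistency check follows from Proposition~\ref{prop_symmetry_constraint}. The group $\mathrm{Aut}(G,w)$ permutes $\mathrm{MIS}(G)$, so every $\sigma\in\mathrm{Aut}(G,w)$ fixes $\mathbf{x}^*$. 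Hence $P_\sigma|\mathbf{x}^*\rangle=|\mathbf{x}^*\rangle$, and the obstruction of Proposition~\ref{prop_obstruction} does not apply.

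Next I would identify the unit sphere of $\Hmis$. A normalised vector in a one-dimensional complex space is a unimodular multiple of the basis vector, so $S(\Hmis)=\{e^{i\theta}|\mathbf{x}^*\rangle:\theta\in\mathbb{R}\}$. The exact relaxed goal \eqref{relaxed_goal}, $\psi(T^*)\in\Hmis$ with $\|\psi(T^*)\|=1$ (unitarity preserves the norm), therefore forces $\psi(T^*)=e^{i\theta}|\mathbf{x}^*\rangle$. This settles the ``forces'' clause of the statement.

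For the approximate version actually delivered by Theorem~\ref{thm_approximate_controllability_MIS}, I would use the fact that the distance to a line has a closed form. The orthogonal projection is $P_{\Hmis}\psi=\langle\mathbf{x}^*|\psi\rangle|\mathbf{x}^*\rangle$. If $\mathrm{dist}(\psi(T^*),\Hmis)\le\varepsilon$, then Pythagoras gives $|\langle\mathbf{x}^*|\psi(T^*)\rangle|^2\ge1-\varepsilon^2$. Choosing $\theta=\arg\langle\mathbf{x}^*|\psi(T^*)\rangle$ then yields
\[
\|\psi(T^*)-e^{i\theta}|\mathbf{x}^*\rangle\|^2=2-2|\langle\mathbf{x}^*|\psi(T^*)\rangle|\le 2-2\sqrt{1-\varepsilon^2}\le 2\varepsilon^2 .
\]
So the state is within $\sqrt2\,\varepsilon$ of the unique optimal bitstring up to a global phase. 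Alternatively, \eqref{adiabatic_bound_Pasqal} gives closeness to $e^{i\varphi}\psi_1$ directly, with $\psi_1\in\Hmis$ by \eqref{gs_in_Hmis}, so $\psi_1$ is itself a phase times $|\mathbf{x}^*\rangle$.

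The only delicate point is a matter of interpretation, not mathematics. The theorem provides closeness for each $\varepsilon$, not exact membership in $\Hmis$. I would therefore state the conclusion as ``approximately'' (an $O(\varepsilon)$ distance), matching the corollary's wording. The equality $\psi(T^*)=e^{i\theta}|\mathbf{x}^*\rangle$ should be read as the exact-goal case, or as the limit point $\psi_1\in\overline{\mathcal{R}(\phi_B)}$ obtained as $T_{\mathrm{anneal}}\to\infty$.
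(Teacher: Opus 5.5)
Your proposal is correct and is essentially the paper's argument. The paper gives no separate proof and treats the corollary as immediate: with $d=1$ the unit sphere of $\Hmis$ is $\{e^{i\theta}|\mathbf{x}^*\rangle:\theta\in\mathbb{R}\}$, so the exact relaxed goal forces the phase form, and Theorem~\ref{thm_approximate_controllability_MIS} supplies the approximate version. Your explicit bound $\|\psi(T^*)-e^{i\theta}|\mathbf{x}^*\rangle\|\le\sqrt{2}\,\varepsilon$, and your remark that exact equality holds only under the exact goal (or for the limit point $\psi_1$), are useful precisions the paper leaves implicit.
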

	
	\begin{remark}[Time complexity]\label{remark_time_complexity}
		With the annealing initialisation $\phi_B=|0\rangle^{\otimes n}$ there is no reach phase to account for. For the anneal (Step~1), the adiabatic bound \eqref{adiabatic_bound_Pasqal} gives the sufficient time scale
		\begin{equation}\label{adiabatic_time_Pasqal}
			T_{\mathrm{anneal}}(\varepsilon) = \frac{C_{\mathrm{ad}}\,\|\dot\gamma\|_\infty^{2}}{(\Delta_\gamma^{\min})^{3}\,\varepsilon}.
		\end{equation}
		It is only at this point that the \emph{polynomial} form of the spectral-gap hypothesis is needed. The linear dependence on $1/\varepsilon$ is approximately the $\delta=1$ specialization of \cite[Theorem~2.1]{aharonov2008adiabatic} (a different admissible $\delta$ would replace it by $\varepsilon^{-1/\delta}$); it is the choice consistent with the $1/T_{\mathrm{anneal}}$ form of \eqref{adiabatic_bound_Pasqal}. Under Assumption~\ref{assumption_gap_Pasqal}, $\Delta_\gamma^{\min}\geq c/n^q$, whence $T_{\mathrm{anneal}}(\varepsilon)=O(n^{3q}/\varepsilon)$: polynomial in $n$ for any fixed $\varepsilon>0$.
		% (Relative to the non-rigorous folklore estimate $(\Delta_\gamma^{\min})^{-2}$, which would correspond to the inadmissible $\delta=0$, the rigorous exponent $3$ raises the degree from $2q$ to $3q$; since in the regime of interest $\Delta_\gamma^{\min}\le 1$ one has $(\Delta_\gamma^{\min})^{-3}\ge(\Delta_\gamma^{\min})^{-2}$, this is the conservative direction for an upper bound, and the conclusion ``polynomial in $n$'' is unaffected.)
	\end{remark}

	\section{Is there quantum advantage for MIS?}\label{sec:Is there quantum advantage?}
	
	Definition \ref{quantum_advantage_definition} of Quantum Advantage (QA) was formulated for operator controllability on $SU(2^n)$, which, by Proposition \ref{prop_pasqal_no_full_controllability}, does \emph{not} hold for the global, symmetric Pasqal controls. A meaningful notion of QA for the MIS problem on neutral-atom hardware therefore calls for a state-oriented adaptation of Definition \ref{quantum_advantage_definition}, organized around three changes:
	\begin{itemize}
		\item[\textup{(i)}] the \emph{target} is the ground-state manifold $\Hmis$ of \eqref{mis_subspace_def} rather than a single unitary $\Gamma\in SU(2^n)$;
		\item[\textup{(ii)}] \emph{success} is the exact preparation of a state in $\Hmis$ (equivalently, the sampling of a maximum independent set with certainty);
		\item[\textup{(iii)}] the polynomial-time requirement is, intrinsically, a statement about a \emph{family} of instances indexed by the number of vertices $n$, not about a single graph.
	\end{itemize}

	For a normalised state $\psi\in\mathcal{H}$, the probability that a computational-basis measurement returns a maximum independent set of $G$ is, by the Born rule \eqref{measurement_MIS_certainty},
	\begin{equation}\label{mis_success_probability}
		\mathbb{P}_{\mathrm{MIS}}(\psi)\;=\;\bigl\|P_{\Hmis}\,\psi\bigr\|^{2}\;=\;\sum_{\bx\in\MIS(G)}\bigl|\langle\bx|\psi\rangle\bigr|^{2},
	\end{equation}
	and $\mathbb{P}_{\mathrm{MIS}}(\psi)=1$ \emph{if and only if} $\psi\in\Hmis$. Accordingly we take as success \emph{event} the exact membership $\psi(T)\in\Hmis$, equivalently, the sampling of a maximum independent set with certainty, and define the \textbf{minimal MIS-preparation time} of $G$ by
	\begin{equation}\label{def_min_time_MIS}
		T_{\mathrm{MIS}}(G)\;\defeq\;\inf\Bigl\{T>0 \;\Big|\; \exists\,(\Omega,\delta)\in\mathscr{U}_{\mathrm{ad}}^{T}\ \text{such that}\ \psi(T)\in\Hmis\Bigr\}\footnote{we adopt the convention $\inf\varnothing = + \infty$},
	\end{equation}
	where $\psi(\cdot)$ solves the Pasqal equation \eqref{pasqal_equation_explicit} with initial condition $\psi(0)=\phi_B=|0\rangle^{\otimes n}$ and controls $(\Omega,\delta)$ in the admissible set $\mathscr{U}_{\mathrm{ad}}^{T}$ \eqref{admissible_controls_pasqal}. This is the state-oriented analog of the operator minimal time \eqref{def_min_time}, the target operator $\Gamma$ being replaced by the target subspace $\Hmis$.

	\begin{definition}[Quantum Advantage for a family of MIS instances]\label{def_QA_specific_MIS}
		Let $\{G_n\}_{n\in\mathbb{N}}$ be a family of unit-disk graphs, with $G_n=(V_n,E_n)$ on $|V_n|=n$ vertices for which the best known classical algorithm requires a computing time\footnote{As we anticipated, all along this manuscript, we take the second (s) as unit of measurement of time.} exponential in the number of qubits $n$. Assume each $G_n$ is realized as in \eqref{rydberg_radius} by an atom layout $\{r_i\}_{i=1}^n$ and thereby defining a Pasqal Hamiltonian \eqref{neutral_atoms_hamiltonian} and the associated bilinear Schr\"{o}dinger equation \eqref{pasqal_equation_explicit} on $\mathcal{H}=\bigotimes_{i=1}^n\mathbb{C}^2$. We say that there is (exponential) \emph{Quantum Advantage for the MIS problem along the family $\{G_n\}_n$} if there exist a constant $C>0$ and an integer $p\in\mathbb{N}$ \emph{(both independent of $n$)} such that, for every $n\in\mathbb{N}$, there exist a time horizon $T_n\leq C\,n^{p}$ and admissible controls $(\Omega_n,\delta_n)\in\mathscr{U}_{\mathrm{ad}}^{T_n}$ for which the solution $\psi_n$ of \eqref{pasqal_equation_explicit} from $\psi_n(0)=\phi_B=|0\rangle^{\otimes n}$ satisfies
		\begin{equation}\label{label_QA_MIS}
			\psi_n(T_n)\in\Hmis ,
		\end{equation}
		with $\Hmis$ the MIS subspace \eqref{mis_subspace_def}. Equivalently, $T_{\mathrm{MIS}}(G_n)\leq C\,n^{p}$ for every $n$ (cf.\ \eqref{def_min_time_MIS}). By \eqref{measurement_MIS_certainty}, condition \eqref{label_QA_MIS} is equivalent to $\mathbb{P}_{\mathrm{MIS}}(\psi_n(T_n))=1$: a computational-basis measurement of $\psi_n(T_n)$ returns a maximum independent set of $G_n$ with certainty.
	\end{definition}

	\begin{definition}[Quantum Advantage for all MIS]\label{def_QA_MIS}
		We say that there is (exponential) \emph{Quantum Advantage for the MIS problem on the class of all unit-disk graphs} if there exist a constant $C>0$ and an integer $p\in\mathbb{N}$ \emph{(both independent of $n$ and of $G$)} such that, for every $n\in\mathbb{N}$ and every unit-disk graph $G$ on $n$ vertices,
		\begin{equation}\label{label_QA_all_MIS}
			T_{\mathrm{MIS}}(G)\;\leq\;C\,n^{p}.
		\end{equation}
		Equivalently, Definition \ref{def_QA_specific_MIS} holds, with the \emph{same} constants $C,p$, along \emph{every} family of unit-disk graphs.
	\end{definition}
	
	\begin{remark}[Polynomial Quantum Advantage for MIS]\label{polynomial_Quantum_Advanatage_MIS}
		The above definitions might be modified in two directions
		\begin{itemize}
			\item[a)] exact reachability requirement \eqref{label_QA_MIS} might be replaced with an approximate one
			\begin{equation}
				\mathrm{dist}\big(\psi_n(T_n),\,\Hmis\big)\ \le\ \varepsilon;
			\end{equation}
			\item[b)] as in Remark \ref{quantum_advantage_definition_remark}, analogous definitions for polynomial Quantum Advantage might be derived.
		\end{itemize}
	\end{remark}

	Under the widely believed complexity-theoretic assumption $\mathsf{NP}\not\subseteq \mathsf{BQP}$, we conjecture one cannot hope for a \emph{worst-case, exact} QA in the sense of Definition \ref{def_QA_MIS}. Three relaxations, each of independent interest from the control-theoretic viewpoint, are nevertheless meaningful and compatible with known complexity.
	
	\begin{enumerate}
		\item[(R1)] \textbf{Average-case QA.} Replace the worst-case requirement over all unit-disk graphs in Definition \ref{def_QA_MIS} by an expectation over a \emph{random} ensemble. Specifically, let $\mathcal{G}(n,\rho)$ denote the random geometric graph model in which $n$ points are drawn uniformly in $[0,1]^2$ and two vertices are connected if and only if their Euclidean distance is at most $\rho = \rho(n)$. Average-case QA holds if
		\begin{equation}\label{average_case_QA}
			\mathbb{E}_{G\sim\mathcal{G}(n,\rho)}\big[\mathbb{P}[\mathbf{x}\mbox{ encodes an MIS of }G]\big] \geq \eta
		\end{equation}
		for some $\eta>0$ independent of $n$, with a time horizon polynomial in $n$. Numerical evidence in \cite{ebadi2022quantum,tibaldi2025analog} suggests that, on classes of random graphs at the hard-instance density, the Pasqal/QuEra platforms do sample near-optimal independent sets in time polynomial in $n$ with non-trivial probability.
		
		\item[(R2)] \textbf{Approximate QA.} Relax ``Maximum Independent Set'' in \eqref{label_QA_MIS} to an approximation guarantee: for some fixed $\varepsilon\in(0,1)$ (independent of $n$ and $G$), require
		\begin{equation}\label{approximate_QA}
			\mathbb{P}\big[\,\mathbf{x}\mbox{ encodes an independent set of $G$ of cardinality }\geq (1-\varepsilon)\,\alpha(G)\,\big]\geq \eta.
		\end{equation}
		This is the natural formulation in the QAOA literature, yielding an approximation-ratio guarantee rather than exact optimality, as in Theorem \ref{thm_approximate_controllability_MIS}. The computational complexity landscape changes significantly: whereas MIS is NP-hard to approximate within a factor $n^{1-\varepsilon}$ in general \cite{zuckerman2007linear}, on unit-disk graphs a Polynomial Time Approximation Scheme (PTAS) exists classically \cite{hunt1998nc}, so the relevant question becomes whether the quantum protocol achieves a given approximation ratio \emph{faster} than classical algorithms.
		
		\item[(R3)] \textbf{Turnpike QA.} Replace the time bound $T_G\leq C n^p$ by a polynomial upper bound on the minimal time of the \emph{integral-tracking} optimal control problem \eqref{optimal_control_problem_qaoa_tracking}, in which the running cost $\langle\psi(t)|H_C|\psi(t)\rangle$ enforces adiabatic following of the instantaneous ground state along the whole interval $[0,T]$.
	\end{enumerate}
	
	The three relaxations (R1)--(R3) outline three research programmes, along which the control-theoretic machinery of chapter \ref{chapter:A surrogate problem} (the surrogate commutator functional, Proposition \ref{prop_7}) can be applied to test, necessarily or sufficiently, whether Quantum Advantage for MIS holds on a given class of instances.
	
	\begin{remark}[Structural contrast with the QFT case]\label{remark_structural_contrast}
		The analysis of Quantum Advantage for MIS on neutral-atom hardware differs structurally from the QFT analysis of chapter \ref{chapter:QA_QFT} in three ways:
		\begin{enumerate}
			\item[(a)] \textbf{Controllability.} For the QFT, the superconducting platform enjoys full operator controllability on $SU(2^n)$ (Proposition \ref{prop_1}), and QA is established by bounding the minimal time $T_{\mbox{\tiny{min}}}(\Gamma_{\mbox{\tiny{QFT}}})$ to implement a specific unitary $\Gamma_{\mbox{\tiny{QFT}}}$. For MIS, operator controllability fails (Proposition \ref{prop_pasqal_no_full_controllability}), and QA is formulated as a state-preparation problem with exact target $\Hmis$, the success event being the membership $\psi(T)\in\Hmis$ \eqref{label_QA_MIS}.
			\item[(b)] \textbf{Complexity barrier.} For the QFT, the polynomial bound $T_{\mbox{\tiny{min}}} \leq \tau n^2$ is unconditional (Theorem \ref{theorem_QA_QFT}). For MIS, worst-case QA we conjecture is obstructed by NP-hardness, so only the relaxed notions (R1)--(R3) are viable.
			\item[(c)] \textbf{Problem encoding.} For the QFT, the target operator $\Gamma_{\mbox{\tiny{QFT}}}$ is independent of any problem instance: it is a fixed unitary on $2^n$ dimensions. For MIS, the target depends on the graph $G$, which is encoded in the atom layout $\{r_i\}$ and hence in the drift Hamiltonian $H_0$; the dependence of the minimal time on $G$ is the source of the worst-case/average-case distinction.
		\end{enumerate}
	\end{remark}
	
%	\begin{remark}[Connection with the surrogate problem]\label{remark_surrogate_MIS}
%		Since full operator controllability does not hold, Proposition \ref{prop_7} does not apply directly. As discussed in section \ref{sec:surrogate_application}, a natural analog of the surrogate bound \eqref{QA_turnpike} in the state-oriented setting replaces the commutator $[U(t)\Gamma^*,H_0]$ by the \emph{residual energy}
%		\begin{equation}\label{residual_energy_MIS}
%			\langle \psi(t)|H_C|\psi(t)\rangle - E_{\min}(H_C),
%		\end{equation}
%		and the minimal operator-controllability time $T_{\mbox{\tiny{min}}}$ by the adiabatic time $T_{\mathrm{ad}}$ of \eqref{adiabatic_time_scale}. Establishing a rigorous state-targeted analog of Proposition \ref{prop_7} for the MIS setting is an important open problem (see also chapter \ref{chapter:Open problems}).
%	\end{remark}
	
	\section{The classical-quantum hybrid loop}\label{sec:How QAOA is solved}
	
	The optimal control problem \eqref{optimal_control_problem_qaoa}-\eqref{QAOA_state_equation} is solved by interlacing classical and quantum computation, as follows.
	\begin{itemize}
		\item On the Quantum Computer (QC) one runs functional evaluation: the Schr\"{o}dinger equation \eqref{QAOA_state_equation} is physically \emph{observed} (not classically simulated). In practice, given a candidate control $u(\cdot)$, the Quantum Computer prepares $\psi(T)$ and a finite number of repeated measurements in the computational basis estimate $J_T(u) = \langle \psi(T) | H_C | \psi(T) \rangle$ via the empirical average of $f(\mathbf{x})$ over the sampled bitstrings.
		\item On the classical computer, one performs the optimization step (control update); for instance, by gradient descent with finite-difference gradients, by Bayesian optimization, or by trust-region methods.
	\end{itemize}
	
	For instance, in \cite{tibaldi2025analog}, QAOA on MIS instances is run
	\begin{itemize}
		\item on the quantum hardware \href{https://portal.pasqal.cloud/devices/FRESNEL}{Pasqal FRESNEL};
		\item using a Bayesian optimizer as the classical outer loop.
	\end{itemize}
	The Hamiltonian, of the form \eqref{neutral_atoms_hamiltonian}, is written in \cite[equation (A.1) at page 12]{tibaldi2025analog}, with controls $\Omega_{\theta}(t)$ and $\delta_{\theta}(t)$.
	
	\section{Stabilization via integral tracking functionals}\label{sec:Advanced State Stabilization via Integral Tracking Functionals}
	
	The standard optimal control formulation \eqref{optimal_control_problem_qaoa}-\eqref{QAOA_state_equation} is inherently fragile, since the cost functional $J_T(u)$ is evaluated only at the terminal time $T$. If the system undergoes a diabatic transition at an intermediate time $t < T$, due, e.g., to thermal noise or to a rapidly closing energy gap, the state $\psi(t)$ may diverge from the instantaneous ground-state manifold of $H_{\gamma}(t)$. Because the optimizer only receives a penalty at the terminal time, the gradient signal used to correct intermediate errors is highly diffuse.
	
	To resolve this fragility and dynamically stabilize the system, advanced control theory modifies the QAOA objective by introducing an integral tracking functional. Instead of measuring only terminal performance, the target objective tracks the expected energy of $H_C$ continuously throughout the entire evolution:
	\begin{equation}\label{optimal_control_problem_qaoa_tracking}
		\min_{u \in L^2(0,T;\mathbb{R})} J_T(u) = \frac{\lambda_{\mbox{\tiny{ctrl}}}}{2} \int_0^T |u(t)|^2 dt + \frac{1}{2} \int_0^T \langle \psi(t) | H_C | \psi(t) \rangle dt,
	\end{equation}
	with state equation \eqref{QAOA_state_equation} and $\lambda_{\mbox{\tiny{ctrl}}} > 0$ a regularization parameter (not to be confused with the QUBO penalty $\lambda$ of \eqref{mis_qubo}). The Quantum Adiabatic Theorem implies that the optimal solution of \eqref{optimal_control_problem_qaoa_tracking} drives the system toward a ground state of $H_C$, encoding a Maximum Independent Set of $G$ (Lemma \ref{lemma_endpoints}).
	
	The mathematical lineage of this approach is rooted in classical PDE stabilization and turnpike theory \cite{porretta2013long,PZ2,trelat2015turnpike,trelat2018steady,pighin2020turnpike,trelat2024control}: the running cost $\langle \psi | H_C | \psi \rangle$ plays the role of a tracking term penalizing deviations from the target ground-state manifold over the whole horizon $[0,T]$, while the $L^2$ Tikhonov term ensures coercivity of the cost in the control variable.

	\chapter{Open problems}\label{chapter:Open problems}
	
	Let us present some interesting open problems, which, as long as we know, were not investigated in the literature so far.
	
	\begin{enumerate}
		\item \textbf{Continuous-time direct proof of Quantum Advantage for QFT.} Direct proof of the Quantum Advantage for QFT, not passing by discrete-time gates composition; rather employing explicit continuous-time controls. Of course, this might be relaxed to an approximate controllability result, where the QFT operator is reached, up to an error $\varepsilon$.
		\item \textbf{Return time estimate.}  Estimate the minimal time $T>0$ to solve the return problem
		\begin{equation}\label{label_return_problem}
			\begin{cases}
				i\frac{d}{dt}U(t) = \big(H_0 + \sum_{j=1}^{m}u_j(t)H_j\big)U(t),& t\in (0,T)\\
				U(0)=I,\\
				U(T)=I.\\
			\end{cases}
		\end{equation}
		\item \textbf{Stabilization along operators orbits.}  In the framework of section \ref{chapter:Mathematical framework}, does there exists a control $u\in \mathscr{U}^T$, such that
		\begin{equation}
			\int_{0}^{+\infty}\left\|U(t)-\exp(-iH_0t)\Gamma\right\|^2dt<+\infty
		\end{equation}
		? This might be related to \cite{beauchard2024small} and the possibility of moving in the manifold $M=\left\{\exp(-iH_0t) \ | \ t\in \mathbb{R}\right\}$ in small time (smaller than the period of $H_0$ if $H_0$ is periodic).
		\item \textbf{Rigorous proof of uniformly bounded elementary gate time.} In theorem \ref{theorem_QA_QFT}, we assumed uniformly bounded elementary gate time (definition \ref{def_elementary_gate_time}). A rigorous proof would require analyzing the minimal time for elementary gates (Hadamard, controlled-rotation, CNOT) in the presence of the drift Hamiltonian $H_0$, showing that $T_{\mbox{\tiny{min}}}(G) \leq \tau$ with $\tau$ independent of $n$. This likely involves a careful analysis in the interaction picture (rotating frame), exploiting the local structure of single- and two-qubit control Hamiltonians. A sub-Riemannian geometry viewpoint on $SU(N)$ might be useful \cite{agrachev2017note,boscain2021introduction}.
		\item \textbf{Converse of the necessary condition for QA.} Proposition \ref{prop_7} provides a necessary condition for QA in terms of the value function $V(T,\Gamma)$. A natural question is whether the converse holds, namely: \textit{if $V(T,\Gamma) \leq Cn^p$ for all $T > 0$, does Quantum Advantage hold?} This would provide a full characterization of QA in terms of the surrogate problem.
		\item \textbf{Greedy approach to minimize quantum hardware use in Quantum Computing.} On the one hand, use quantum hardware is expensive. On the other hand, several real world problems (e.g., optimization problems) depends on continuously changing configurations. This would oblige to use quantum hardware in real-time, which would lead to huge licensing cost. Greedy approach \cite{barron2008approximation,lazar2016greedy,hernandez2017greedy} might be employed to
		\begin{itemize}
			\item[\textbf{Step 1.}] Identify the most representative configurations.
			\item[\textbf{Step 2.}] Compute solutions, by quantum hardware, \textit{offline} only for the identified most representative configurations.
			\item[\textbf{Step 3.}] Compute solutions, for any configuration, \textit{online} by combining pre-computed solutions.
		\end{itemize}
		See also the work \cite{villar2026transfer} on the transfer of knowledge in quantum algorithms.
		\item \textbf{New ans\"{a}tze for quantum-powered discrete optimization.} In the context of quantum discrete optimization (section \ref{chapter:Quantum Discrete Optimization}), research new ans\"{a}tze (controlled hamiltonians, fitting with given quantum hardware), maximizing controllability properties of the associated controlled {S}chr\"{o}dinger equation to rapidly converge to the solution of the discrete optimization problem. Note that
		\begin{equation}
			\mbox{QAOAs }\subset\mbox{ VQAs},
		\end{equation}
		where
		\begin{itemize}
			\item QAOAs stands for Quantum Approximate Optimization Algorithms;
			\item VQAs stands Variational Quantum Algorithms, like the Variational Quantum Eigensolver.
		\end{itemize}
		From a terminological viewpoint, algorithms employing an ansatz, differing from the standard QAOA, would be named VQAs. New ans\"{a}tze might employ controls to circumvent exponentially small gaps between first and second eigenvalues ofn the Hamiltonians.
	\end{enumerate}
	
	%The coupuling map $\mathcal{E}$ is bipartite \cite{tindall2024confinement,pelofske2024scaling}.
	
	% IDEA for software
	%Input: special unitary matrix.
	%Output: Yes [there exists Quantum Advantage (QA)]. No [there is no Quantum Advantage (QA)].
	
	\section*{Acronyms}
\addcontentsline{toc}{section}{List of Acronyms}
\markboth{LIST OF ACRONYMS}{LIST OF ACRONYMS}

For the reader's convenience we collect here, in alphabetical order, all
the acronyms employed throughout these notes, together with a short
gloss tying each one to the control-theoretic framework developed in the
sequel. The complexity-theoretic classes are mentioned below with their
meaning (see, e.g., \cite{nielsen2010quantum} and the references
therein).

\begin{description}

  \item[AQO] \emph{Adiabatic Quantum Optimization}: the analog paradigm
  in which the register is driven by an Hamiltonian, slowly varying from an initial Hamiltonian to a problem Hamiltonian, to solve a discrete optimization problem.

  \item[BQP] \emph{Bounded-error Quantum Polynomial time}: the class of
  decision problems solvable by a quantum computer in time polynomial in
  the input size with error probability at most $1/3$.  The working
  hypothesis $\mathsf{NP}\not\subseteq\mathsf{BQP}$ underlies the
  worst-case obstruction to Quantum Advantage for the MIS problem.

  \item[CNOT] \emph{Controlled-NOT} gate: the two-qubit entangling gate
  which, together with the single-qubit rotations, generates a universal
  gate set.

  \item[DFT] \emph{Discrete Fourier Transform} on $N=2^n$ points;
  classically computed in $O(N\log N)$ operations by the FFT.

  \item[DMCV] \emph{Direct Methods in the Calculus of Variations}
  \cite{dacorogna2007direct}: used to establish existence of minimizers
  of the surrogate functional $J_T$ and attainment of the minimal time
  $T_{\mbox{\tiny{min}}}$ under the control constraint $|u(t)|\le M$.

  \item[FFT] \emph{Fast Fourier Transform}: the classical $O(N\log N)$
  algorithm computing the DFT, against which the $O(n^2)$ cost of the QFT
  is to be compared.

  \item[IS] \emph{Independent Set}: a subset of pairwise non-adjacent
  vertices of a graph $G=(V,E)$; the associated subspace of the Hilbert
  space is denoted $\His$.

  \item[MIS] \emph{Maximum Independent Set}: an independent set of
  maximum cardinality $\alpha(G)$; the family of maximizers is
  $\operatorname{MIS}(G)$ and the corresponding subspace is $\Hmis$.
  Computing $\alpha(G)$ is NP-hard \cite{karp1972reducibility}, and the
  problem admits a native neutral-atom encoding via the Rydberg
  blockade.

  \item[NISQ] \emph{Noisy Intermediate-Scale Quantum} (era / devices):
  present-day hardware with a limited number of qubits and no full
  fault tolerance, the regime in which QAOA operates.

  \item[NP] \emph{Nondeterministic Polynomial time}; \textbf{NP-hard}
  and \textbf{NP-complete} denote the corresponding hardness notions.
  Deciding a maximum independent set is NP-hard in the classical sense.

  \item[P] \emph{(deterministic) Polynomial time}: the class of decision
  problems solvable by a classical deterministic machine in polynomial
  time.

  \item[PDE] \emph{Partial Differential Equation}; the controlled
  Schr\"{o}dinger equation is a PDE, in case the quantum state space $\mathcal{H}$ has dimension infinity.

  \item[PTAS] \emph{Polynomial-Time Approximation Scheme}: on unit-disk
  graphs the MIS problem admits a classical PTAS \cite{hunt1998nc},
  which tempers the prospects for a worst-case Quantum Advantage.

  \item[QA] \emph{Quantum Advantage}: in the sense of Definition
  \ref{quantum_advantage_definition}, the conjunction of operator
  controllability ($\mathscr{U}_{\mbox{\tiny{ad}}}\neq\varnothing$) with
  a polynomial-in-$n$ bound $T_{\mbox{\tiny{min}}}\le Cn^p$ on the
  minimal control time. For discrete optimization problems, like Maximum Independent Set (MIS), the definition of Quantum Advantage is state-oriented (see Definition \ref{def_QA_specific_MIS} and Definition \ref{def_QA_MIS}).

  \item[QAOA] \emph{Quantum Approximate Optimization Algorithm}
  \cite{farhi2014quantum}: here recast as a continuous-time bilinear
  optimal control problem.

  \item[QC] \emph{Quantum Computing} / \emph{Quantum Computer}.

  \item[QFT] \emph{Quantum Fourier Transform}: the unitary
  $\Gamma_{\mbox{\tiny{QFT}}}\in SU(2^n)$ implementing the DFT on
  quantum state space, realized in $O(n^2)$ elementary gates and
  hence in $O(n^2)$ minimal time on digital hardware.

  \item[QKD] \emph{Quantum Key Distribution}: a class of quantum
  communication protocols (e.g.\ Ekert91 \cite{ekert1991quantum} and
  BBM92 \cite{bennett1992quantum}) resting on superposition and
  entanglement.

  \item[QUBO] \emph{Quadratic Unconstrained Binary Optimization}: the
  penalized binary reformulation of the MIS problem and several discrete optimization problems.

  \item[$SU(N)$] \emph{Special Unitary group} of degree $N$ (the
  determinant-one unitary matrices), with Lie algebra $\mathfrak{su}(N)$
  of traceless skew-Hermitian matrices; operator controllability is
  phrased as reachability of any target $\Gamma\in SU(N)$ and certified
  through the Lie-algebraic rank condition on $\mathfrak{su}(N)$.

  \item[UDG(s)] \emph{Unit-Disk Graph(s)}: graphs $G=(V,E)$ in which two
  vertices are adjacent precisely when their Euclidean distance is at
  most the Rydberg-blockade radius $R_b$; these are the graphs natively
  realized by neutral-atom registers.

\end{description}

	\appendix
	
	\chapter{The case of steady controls}\label{chapter:The case of steady controls}
	
	Let us consider the case of steady controls. This might give insight even for time evolution controls. Indeed, the action of piece-wise constant controls can be seen as composition  of the actions of several steady controls.
	\begin{remark}[Steady controls]
		Suppose the controls are steady, i.e.
		\begin{equation}
			u_j\equiv\bar{u}_j\in\mathbb{R},\hspace{0.06 cm}\forall j=1,\dots,m.
		\end{equation}
		We get then the {S}chr\"{o}dinger equation with constant controls
		\begin{equation}\label{ger equation_constant_controls}
			i\frac{d}{dt}\mathbf{\psi}(t) = H\mathbf{\psi}(t),\hspace{0.06 cm}t\in(0,T),
		\end{equation}
		where
		\begin{equation}
			H\defeq H_0 + \sum_{j=1}^{m}H_j\bar{u}_j.
		\end{equation}
		Then, we have the explicit formula for solution to \eqref{ger equation_constant_controls}
		\begin{equation}
			\mathbf{\psi}(t)=\exp(-iHt)\mathbf{\psi}_0.
		\end{equation}
		Hence, working only in the class of steady controls, the computation problem \eqref{def_1} can be reduced to finding the solution in the unknowns $\bar{u}_j$ the linear system
		\begin{equation}
			H_0+\sum_{j=1}^{m}H_j\bar{u}_j=\frac{i}{T}\log_{e}(U),
		\end{equation}
		$U$ being the associated matrix to $\Gamma$ in the canonical basis
		% reference
		of $\mathcal{H}$. 
	\end{remark}
	
	\chapter{An example of drift Hamiltonian $H_0$}\label{chapter:drift_hamiltonian}
	Inspired from \cite[equation (15)]{hou2014realization}, let us give an example of drift Hamiltonian.
	
	First, out of the directed graph $\mathcal{E}$ defined in subsection \ref{subchapter:couplingmap}, we define the associated undirected graph
	\begin{equation}\label{indirect_graph_definition}
		\mathcal{E}_{\mbox{\tiny{undirected}}}\defeq \left\{(k,l) \ | \ (k,l)\in \mathcal{E}\right\}\bigcup \left\{(l,k) \ | \ (k,l)\in \mathcal{E}\right\}.
	\end{equation}
	
	The drift Hamiltonian consists of qubit self-energies and static qubit-qubit couplings:
	\begin{equation}\label{H_0}
		H_0 = \sum_{k=1}^{127} \frac{\omega_k}{2} Z_k + \sum_{(k,l) \in \mathcal{E}_{\mbox{\tiny{undirected}}}} J_{kl} Z_k Z_l,
	\end{equation}
	where
	\begin{itemize}
		\item $\omega_k>0$ is the transition frequency of qubit $k$;
		\item $Z_k$ is the Pauli-$Z$ operator acting on qubit $k$;
		\item $J_{kl}>0$ is a scalar quantifying the coupling strength between qubits $k$ and $l$;
		\item the addendum $\sum_{(k,l) \in \mathcal{E}_{\mbox{\tiny{undirected}}}} J_{kl} Z_k Z_l$ models the Ising interaction;
		\item $X_k$, $Y_k$ are Pauli-$X$ and Pauli-$Y$ operators acting on qubit $k$;
		\item $\mathcal{E}_{\mbox{\tiny{undirected}}}\subseteq \left\{1,\dots,127\right\}\times \left\{1,\dots,127\right\}$ is the set of coupled qubit pairs; it represents the undirected graph defined in \eqref{indirect_graph_definition} (see figure \ref{grah_1});
	\end{itemize}
	(see \cite{koch2007charge,mckay2018qiskit} and \cite{magesan2020effective}).
	
%	The eigenvalues are all rational, whence they are commensurate. This guarantees the periodicity of $t\longmapsto \exp(-iH_0 t)$.
	
	The {S}chr\"{o}dinger equation \eqref{ibm_equation_control} can be rewritten as
	\begin{equation}\label{ibm_brisbane_equation_}
		i \frac{d}{dt} {\psi(t)} = \Bigg(
		\sum_{k=1}^{127} \frac{\omega_k}{2} Z_k
		+ \sum_{(k,l) \in \mathcal{E}_{\mbox{\tiny{undirected}}}} J_{kl} Z_k Z_l
		+ \sum_{k=1}^{127} \left( u_k^X(t) X_k + u_k^Y(t) Y_k \right)
		+ \sum_{(c,t) \in \mathcal{E}} v_{ct}(t) Z_c \otimes X_t
		\Bigg) {\psi(t)}.
	\end{equation}
	
	Note that, by employing this drift Hamiltonian, by slightly modifying the proof of Proposition \ref{prop_1}, operator controllability holds even without cross-resonance controls. Namely, we can control the system setting $v_{ct}(t)\equiv 0$. However, cross-resonance controls might be useful to reduce the number of switches.

	\chapter{A surrogate problem}
	\label{chapter:A surrogate problem}
	
	This section introduces a surrogate Optimal Control Problem, which will give a necessary condition for Quantum Advantage (QA).
	
	Let us work in the framework of section \ref{chapter:Mathematical framework}.
	
	For any time horizon $T>0$, define the functional
	\begin{equation}\label{functionalJ_T_commutator_alongtime}
		J_T:\mathscr{U}^T\longrightarrow \mathbb{R}
	\end{equation}
	\begin{equation}
		J_T(u)\defeq \frac12 \int_0^T\left\|[U(t)\Gamma^*,H_0]\right\|^2dt,
	\end{equation}
	where
	\begin{itemize}
		\item the state $U(t)$ solves 
		\begin{equation}\label{matrix_schrodinger_J_T}
			\begin{cases}
				i\frac{d}{dt}U(t) = \big(H_0 + \sum_{j=1}^{m}u_j(t)H_j\big)U(t),& t\in (0,T)\\
				U(0)=I,\\
			\end{cases}
		\end{equation}
		with control $u(t)=(u_k(t))_{k=1}^m$;
		\item $[\cdot,\cdot]$ denotes the commutator;
		\item the norm is the matrix norm induced by the $L^2$ norm\footnote{Let $A$ be a $N\times N$ matrix with complex entries. The $L^2$-induced matrix norm is
			\begin{equation}
				\left\|A\right\|=\sup_{\mathbf{z}\in \mathcal{H}\setminus  \left\{\mathbf{0}_{\mathcal{H}}\right\}}\frac{\left\|A\mathbf{z}\right\|}{\left\|\mathbf{z}\right\|}.
			\end{equation}
		}
	\end{itemize}
	
	Reminding the constraints $|u(t)|\leq M$ for control in $\mathscr{U}^T$, by the Direct Methods in the Calculus of Variations (DMCV) \cite{dacorogna2007direct}, there exists a minimizer $u_T\in \mathscr{U}^T$ for \eqref{functionalJ_T_commutator_alongtime}.
	
	For any time horizon $T>0$ and special unitary operator $\Gamma$, define the value function
	\begin{equation}\label{label_value_function}
		V(T,\Gamma)\defeq\inf_{u\in \mathscr{U}^T}J_T=\min_{u\in \mathscr{U}^T}J_T.
	\end{equation}
	
	In the next proposition, we present a necessary condition for QA in terms of estimates of the value function \eqref{label_value_function}.
	
	\begin{proposition}\label{prop_7}
		Let $\Gamma$ be a special unitary operator. Assume \eqref{ger equation} is operator controllable. In the context of Definition \ref{quantum_advantage_definition}, suppose there is Quantum Advantage (QA). Then, the value function can be estimated as follows
		\begin{equation}\label{QA_turnpike}
			V(T,\Gamma)\leq 2\left\|H_0\right\|^2Cn^p,
		\end{equation}
		the constant $C$ and the integer $p$ being the same of \eqref{label_t_min_estimate}.
	\end{proposition}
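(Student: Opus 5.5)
The idea is to bound the value function $V(T,\Gamma)$ from above by evaluating $J_T$ at one explicit admissible control, built from the time-optimal control of Remark \ref{remark_min_time}. Since $V(T,\Gamma)=\min_{u\in\mathscr{U}^T}J_T(u)$, any $u\in\mathscr{U}^T$ with $J_T(u)\le 2\|H_0\|^2Cn^p$ proves \eqref{QA_turnpike}.

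\textbf{Construction.} By operator controllability and Remark \ref{remark_min_time}, there is $u_{\min}$ with $U_{\min}(T_{\mbox{\tiny{min}}})=\Gamma$. By Definition \ref{quantum_advantage_definition}, $T_{\mbox{\tiny{min}}}\le Cn^p$. Given $T>0$, define $\hat u$ on $[0,T]$ as follows:
\begin{itemize}
\item $\hat u=u_{\min}$ on $[0,\min(T,T_{\mbox{\tiny{min}}}))$;
\item $\hat u\equiv0$ on $[T_{\mbox{\tiny{min}}},T]$ when $T>T_{\mbox{\tiny{min}}}$.
\end{itemize}
The zero control respects $|u|\le M$, so $\hat u\in\mathscr{U}^T$.

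\textbf{Key step: the commutator vanishes after $T_{\mbox{\tiny{min}}}$.} For $t\ge T_{\mbox{\tiny{min}}}$, uniqueness of the Cauchy problem \eqref{matrix_schrodinger_J_T} gives
\[
U(t)=\exp\bigl(-iH_0(t-T_{\mbox{\tiny{min}}})\bigr)\Gamma .
\]
Hence $U(t)\Gamma^*=\exp(-iH_0(t-T_{\mbox{\tiny{min}}}))$ commutes with $H_0$, and the integrand of $J_T$ is identically zero there. This is the one place where the particular form $[U(t)\Gamma^*,H_0]$ of the functional matters.

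\textbf{Bound on $[0,\min(T,T_{\mbox{\tiny{min}}})]$.} Here $U(t)\Gamma^*$ is unitary, so it has operator norm $1$. Therefore
\[
\|[U(t)\Gamma^*,H_0]\|\le 2\|U(t)\Gamma^*\|\,\|H_0\|=2\|H_0\|,
\]
and the integrand $\tfrac12\|\cdot\|^2$ is at most $2\|H_0\|^2$.

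\textbf{Conclusion.} Combining the two time intervals,
\[
V(T,\Gamma)\le J_T(\hat u)\le 2\|H_0\|^2\min(T,T_{\mbox{\tiny{min}}})\le 2\|H_0\|^2T_{\mbox{\tiny{min}}}\le 2\|H_0\|^2Cn^p .
\]
The bound holds uniformly in $T>0$. No step is a real obstacle. The only points needing care are that the minimal time is attained (Remark \ref{remark_min_time}) and that the free evolution preserves commutation with $H_0$.
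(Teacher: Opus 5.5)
Your proposal is correct and follows the paper's proof essentially step for step. You use the same control $\hat u$ (the time-optimal control followed by the zero control), the same observation that $[\exp(-iH_0(t-T_{\mbox{\tiny{min}}})),H_0]=0$ after $T_{\mbox{\tiny{min}}}$, and a unitarity bound giving $\|[U(t)\Gamma^*,H_0]\|^2\le 4\|H_0\|^2$ on $[0,T_{\mbox{\tiny{min}}}]$. The paper reaches that bound via $\|A-B\|^2\le 2\|A\|^2+2\|B\|^2$ rather than your $\|[A,B]\|\le 2\|A\|\,\|B\|$; your explicit truncation at $\min(T,T_{\mbox{\tiny{min}}})$ handles the case $T<T_{\mbox{\tiny{min}}}$ slightly more cleanly than the paper's bound by the integral over $[0,+\infty)$.
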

	The necessary condition \eqref{QA_turnpike} can be seen as a specific stabilization-turnpike property on the commutator. A huge literature is available on stabilization of control systems and the turnpike phenomenon; see, for instance, the following articles and books and the references therein \cite{BRC,LIO,MIRRAHIMI20051987,hou2014realization,porretta2013long,PZ2,trelat2015turnpike,trelat2018steady,pighin2020turnpike,trelat2024control}.
	\begin{proof}[Proof of Proposition \ref{prop_7}]
		\textit{Step 1} \ \textbf{Definition of a special control} \\
		Let $T_{\mbox{\tiny{min}}}$ be the minimal time (as defined in \eqref{def_min_time}) for the target operator $\Gamma$. By remark \ref{remark_min_time}, there exists a control $u_{\mbox{\tiny{min}}}\in \mathscr{U}_{\mbox{\tiny{ad}}}$, such that the following operator controllability problem is fulfilled
		\begin{equation}\label{label_controllability_problem_T_minimal_time}
			\begin{cases}
				i\frac{d}{dt}U_{\mbox{\tiny{min}}}(t) = \big(H_0 + \sum_{j=1}^{m}u_{\mbox{\tiny{min}},j}(t)H_j\big)U_{\mbox{\tiny{min}}}(t),& t\in (0,T_{\mbox{\tiny{min}}})\\
				U_{\mbox{\tiny{min}}}(0)=I,\\
				U_{\mbox{\tiny{min}}}(T_{\mbox{\tiny{min}}})=\Gamma.\\
			\end{cases}
		\end{equation}
		Hence, let us define the control		
		\begin{equation}
			\hat{u}(t)\coloneqq \begin{dcases}
				u_{\mbox{\tiny{min}}}(t) \quad &t\in [0,T_{\mbox{\tiny{min}}})\\
				0 \quad &t\in [T_{\mbox{\tiny{min}}},+\infty),\\
			\end{dcases},
		\end{equation}
		Consider the state $\widehat{U}$ satisfying \eqref{matrix_schrodinger_inftime}, with control $u$. Then, by uniqueness of solutions to Cauchy Problem, we have
		\begin{equation}
			\widehat{U}(t)\coloneqq \begin{dcases}
				U_{\mbox{\tiny{min}}}(t) \quad &t\in [0,T_{\mbox{\tiny{min}}})\\
				\Gamma \quad &t=T_{\mbox{\tiny{min}}}\\
				\exp(-i H_0(t-T_{\mbox{\tiny{min}}}))\Gamma \quad &t\in [T_{\mbox{\tiny{min}}},+\infty),\\
			\end{dcases}.
		\end{equation}
		Then, for any time $t\in [T_{\mbox{\tiny{min}}},+\infty)$, the commutator
		\begin{equation}\label{cmmutator_t_0}
			[\widehat{U}(t)\Gamma^*,H_0]=[\exp(-i H_0(t-T_{\mbox{\tiny{min}}}))\Gamma\Gamma^*,H_0]=[\exp(-i H_0(t-T_{\mbox{\tiny{min}}})),H_0]=0.
		\end{equation}
		
		\textit{Step 2} \ \textbf{Estimate of $\left\|[\widehat{U}(t)\Gamma^*,H_0]\right\|^2$, for every time $t\in [0,T_{\mbox{\tiny{min}}}]$} \\
		For all time instances in $[0,T_{\mbox{\tiny{min}}}]$, the matrix $\exp(-i(H_0 + \sum_{j=1}^{m}u_{\mbox{\tiny{min}},j}(t)H_j)t)$ is unitary, whence, by definition of operator norm
		\begin{equation}
			\left\|\widehat{U}(t)\right\|=\left\|\exp(-i(H_0 + \sum_{j=1}^{m}u_{\mbox{\tiny{min}},j}(t)H_j)t)\right\|=1,
		\end{equation}
		namely the {S}chr\"{o}dinger group with Hamiltonian $H(t)\coloneqq H_0 + \sum_{j=1}^{m}u_{\mbox{\tiny{min}},j}(t)H_j$ is unitary.
		
		Therefore, 
		% for all time instances in $[0,T_{\mbox{\tiny{min}}}]$,
		we have
		\begin{eqnarray}
			\left\|\widehat{U}(t)\Gamma^*H_0\right\|^2&=&\left\|H_0\right\|^2\nonumber\\
			\left\|H_0\widehat{U}(t)\Gamma^*\right\|^2&=&\left\|H_0\right\|^2.\nonumber
		\end{eqnarray}
		
		We can then estimate the integral
		\begin{eqnarray}
			\int_0^{T_{\mbox{\tiny{min}}}}\left\|[\widehat{U}(t)\Gamma^*,H_0]\right\|^2dt&\leq&2\int_0^{T_{\mbox{\tiny{min}}}}\left[\left\|\widehat{U}(t)\Gamma^*H_0\right\|^2+\left\|H_0\widehat{U}(t)\Gamma^*\right\|^2\right]dt\nonumber\\
			&=&2\int_0^{T_{\mbox{\tiny{min}}}}\left[\left\|H_0\right\|^2+\left\|H_0\right\|^2\right]dt\nonumber\\
			&=&4\left\|H_0\right\|^2T_{\mbox{\tiny{min}}}.\nonumber
		\end{eqnarray}
		Hence, employing also \eqref{cmmutator_t_0}, we obtain
		\begin{equation}\label{integral_0_+infinity}
			\int_0^{+\infty}\left\|[\widehat{U}(t)\Gamma^*,H_0]\right\|^2dt\leq 4\left\|H_0\right\|^2T_{\mbox{\tiny{min}}}.
		\end{equation}
		
		\textit{Step 3} \ \textbf{Conclusion} \\
		By definition of value function \eqref{label_value_function}, for any time horizon $T>0$,
		\begin{equation}\label{}
			V(T,\Gamma)\leq J_T(\hat{u})\leq \frac12 \int_0^{+\infty}\left\|[\widehat{U}(t)\Gamma^*,H_0]\right\|^2dt\leq 2\left\|H_0\right\|^2T_{\mbox{\tiny{min}}},
		\end{equation}
		where the last inequality uses \eqref{integral_0_+infinity}. This concludes the proof.
	\end{proof}
	
	\section{Application to the two representative problems}\label{sec:surrogate_application}
	
	The surrogate bound \eqref{QA_turnpike} is a \emph{necessary} condition for Quantum Advantage in the sense of Definition \ref{quantum_advantage_definition}. The two representative problems of this manuscript fit the scheme as follows.
	
	\begin{itemize}
		\item[(a)] \textbf{QFT on superconducting hardware.} Combining Theorem \ref{theorem_QA_QFT} with Proposition \ref{prop_7}, we obtain the sharp quantitative form
		\begin{equation}\label{surrogate_QFT}
			V(T,\Gamma_{\mbox{\tiny{QFT}}})\leq 2\left\|H_0\right\|^2\tau n^2,\qquad \forall T>0,
		\end{equation}
		already recorded in Corollary \ref{corollary_necessary_condition_QFT}. Equation \eqref{surrogate_QFT} is checkable on finite-dimensional models of \texttt{ibm\_brisbane}: if one can exhibit a sequence of target unitaries $\Gamma_n \in SU(2^n)$ along which the value function $V(T,\Gamma_n)$ grows faster than $n^2$, then the uniformly-bounded-elementary-gate-time assumption of Definition \ref{def_elementary_gate_time} fails for the given hardware.
		\item[(b)] \textbf{MIS on neutral-atom hardware.} In the analog setting of chapter \ref{chapter:Quantum Discrete Optimization}, full operator controllability on $SU(2^n)$ is ruled out by Proposition \ref{prop_pasqal_no_full_controllability}. Hence, Proposition \ref{prop_7} does not apply \emph{directly} to the state-oriented QA of Definition \ref{def_QA_MIS}. A natural line of research (cf. relaxation (R3) in section \ref{sec:Is there quantum advantage?}) is to formulate a state-targeted analog of \eqref{QA_turnpike}, in which the commutator $[U(t)\Gamma^*,H_0]$ is replaced by the residual energy $\langle \psi(t)|H_C|\psi(t)\rangle - E_{\min}(H_C)$, and the minimal time $T_{\mbox{\tiny{min}}}$ by the adiabatic time dictated by the instantaneous spectral gap of the interpolating Hamiltonian $u(t)H_B+(1-u(t))H_C$.
	\end{itemize}
	
	In both cases, the surrogate functional plays the role of a \emph{certificate}: a polynomial-in-$n$ upper bound on the value function is a necessary condition for polynomial-in-$n$ minimal time, hence for Quantum Advantage.
	
	% to update the bibliography, open an Ubuntu terminal and run
	% cp "/home/dario/Documents/work_in_progress/elaborazione scrittura/my_references.bib" "/home/dario/Documents/work_in_progress/QFT"
	
	% NOTE (added with the rigorous adiabatic estimates): the proof of
% Theorem~\ref{thm_exact_controllability_MIS} and relaxation (R3) now cite
% Jansen-Ruskai-Seiler. Ensure my_references.bib contains:
%   @article{jansen2007bounds,
%     author  = {Jansen, Sabine and Ruskai, Mary-Beth and Seiler, Ruedi},
%     title   = {Bounds for the adiabatic approximation with applications to quantum computation},
%     journal = {Journal of Mathematical Physics},
%     volume  = {48}, number = {10}, pages = {102111}, year = {2007},
%     doi     = {10.1063/1.2798382},
%   }
	\bibliography{my_references}
	\bibliographystyle{siam}
	
%	\medskip
%	% The data information below will be filled by AIMS editorial staff
%	Received xxxx 20xx; revised xxxx 20xx.
%	\medskip
	
\end{document}